\theoremstyle{plain}
\newtheorem{theorem}{Theorem}[section]
\newtheorem{corollary}[theorem]{Corollary}
\newtheorem{lemma}[theorem]{Lemma}
\newtheorem{proposition}[theorem]{Proposition}
\newtheorem{example}[theorem]{Example}
\newcommand{\mi}{\mathbbm i}
\newcommand{\C}{\mathbb C}
\newcommand{\R}{\mathbb R}
\newcommand{\tr}{\mathrm{tr}}
\newcommand{\su}{\mathfrak{su}_{\mbox{\tiny{2}}}}
\numberwithin{equation}{section}
\DeclareMathOperator{\RE}{Re}
\DeclareMathOperator{\IM}{Im}
\title[CMC surfaces with integrable boundary conditions]{On constant mean curvature surfaces satisfying integrable boundary conditions}
\thanks{{\it Mathematics Subject Classification.} 53A10, 53C42, 37K10. \today}
\begin{document}
%\today
%\footnote{\today}
%\author{Laurent Hauswirth}
%\address{L. Hauswirth, Universit\'e Gustave Eiffel, France.}\email{hauswirth@univ-mlv.fr}

\author{Martin Kilian}
\address{Martin Kilian, University College Cork, Ireland.}
%\email{m.kilian@ucc.ie}

%\author{Graham Smith}
%\address{G. Smith, PUC, Rio de Janeiro, Brazil.}
%\email{grahamandrewsmith@gmail.com}

\begin{abstract} We consider the local theory of constant mean curvature surfaces that satisfy one or two integrable boundary conditions and determine the corresponding potentials for the generalized Weierstrass representation. 
\end{abstract}

\maketitle
\section*{Introduction}
Imposing additional properties, like symmetries, embeddedness, simply-connectedness or compactness on a constant mean curvature surface often severely limits the classes of examples.  Historically, the first reduction by Delaunay \cite{Del} resulted in classifying the rotationally symmetric examples. A few decades later, Enneper \cite{Enn, Enn1} and Dobriner \cite{Dob, Dob1} considered surfaces foliated by planar or spherical curvature lines. Modern accounts and further developments can be found in the works of Abresch \cite{Abr} and Wente \cite{Wen, Wen:enn} amongst many others, where the structure equations are reduced to elliptic ordinary differential equations. 

Here we consider the local theory of constant mean curvature surfaces that satisfy one or two integrable boundary conditions. The integrable boundary condition reflects the geometric property that the surface meets a sphere at a constant angle along the curve of intersection. From our integrable systems point of view this means studying the corresponding 'potentials' in the generalized Weierstrass representation of Dorfmeister, Pedit and Wu \cite{DorPW}. 

By a result of \cite{KilS2021} all such surfaces are of finite type, so their potentials are polynomial, and the purpose here is to determine all such potentials. We first study the additional so called 'K-symmetry' on the space of potentials that comes from the integrable boundary condition. In Theorem \ref{th:imaginary} we show that K-symmetry halves the dimension of the space of potentials of arbitrary fixed degree. We show in Corollary \ref{th:off-diagonalKsymmetry} that all off-diagonal K-symmetric potentials of arbitrary high degree yield degree one surfaces, that is, associated family members of Delaunay surfaces. In the second part we consider potentials that give solutions of the elliptic $\sinh$-Gordon equation satisfying two integrable boundary conditions, that is free boundary constant mean curvature surfaces with two boundary components. We do not consider period problems here, but focus on the algebraic conditions for the local theory of such surfaces.

\section{Integrable boundary conditions}
Let $U \subset \R^2$ be an open, connected and simply-connected set containing the origin. 
An associated family $\mathbf{f}_\lambda:U \to \su \cong \R^3,\,\lambda \in \mathbb{S}^1$ of conformaly immersed surfaces with constant mean curvature $H \neq 0$ can be framed by an $\mathrm{SU}_2$--valued \emph{extended frame} $\mathbf{F}_\lambda$ . It satisfies a linear partial differential system $\mathbf{F}_\lambda^{-1} d\mathbf{F}_\lambda = \mathbf{\Omega}_\lambda$. Here $\mathbf{\Omega}_\lambda = \mathbf{U}_\lambda dx + \mathbf{V}_\lambda dy$ is a $2 \times 2$ matrix-valued 1-form which encodes the surface invariants. For the induced metric $\exp [2\omega] \,(dx^2 + dy^2)$ the $\sinh$-Gordon equation
\[
	\Delta \,\omega + \sinh \omega = 0
\]
is the integrability condition 
\[
	2d\mathbf{\Omega}_\lambda + [\mathbf{\Omega}_\lambda \wedge \mathbf{\Omega}_\lambda ] = 0
\]
for $\mathbf{\Omega}_\lambda$. There is a loop group method to obtain integrable connections via a generalized Weierstrass representation \cite{DorPW}. For us it suffices, due to a result in \cite{KilS2021}, to restrict to the finite gap setting, for which we invoke the Symes method \cite{BurP_adl}:  extended frames can be obtained from polynomial Killing fields, and these in turn are determined by their intial conditions, the finite gap potentials. A polynomial Killing field is a solution of a Lax equation
\[
	d\eta = [\eta,\,\Omega (\eta)] \qquad \eta(0) = \xi_\lambda\,.
\]
The initial condition (or potential) $\xi_\lambda$ is a $\mathrm{sl}_2 (\C)$--valued Laurent-polynomial $\lambda \mapsto \xi_\lambda$ and $\eta \mapsto \Omega (\eta)$ is a projection, and $\Omega = \Omega(\eta)$ integrates to an extended frame $\mathbf{F}_\lambda^{-1} d\mathbf{F}_\lambda = \Omega$ of a constant mean curvature surface. The relationship between extended frame and polynomial Killing field is $\eta_\lambda = \mathbf{F}_\lambda^{-1} \xi_\lambda \mathbf{F}_\lambda$. Writing $\partial_\lambda$ for differentiation with respect to $\lambda$,  the corresponding associated family is given by 
\begin{equation} \label{eq:SymBobenko}
	\mathbf{f}_\lambda = - 2\,\mi \lambda H^{-1} (\partial_\lambda \mathbf{F}_\lambda) \,\mathbf{F}_\lambda^{-1}\,.
\end{equation}
After a choice of normalizations as in \cite{HKS1} for the surface invariants, $\mathbf{U}_\lambda$ takes the form
\[
	\mathbf{U}_\lambda= \frac{\mathbbm{i}}{8} \left(
\begin{array}{cc} -2 \omega_y  &  e^\omega \lambda^{-1} + e^{-\omega }  \\
 e^{-\omega } + e^\omega \lambda  & 2  \omega_y \\
\end{array}
\right)\,.
\]
Note that 
\begin{equation*}
 \overline{\mathbf{U}_{\bar\lambda}}^t  = - \mathbf{U}_{\lambda^{-1}}  
	\qquad \mbox{ and} \qquad \overline{\mathbf{F}_{1/\bar\lambda}}^t = \mathbf{F}_\lambda ^{-1}\,.
\end{equation*}
We study solutions $\omega: U \to \mathbb{R}$ of the sinh-Gordon equation that satisfy an additional \emph{integrable boundary condition}
\begin{equation} \label{eq:boundary}
	\omega_y  = e^\omega A + e^{-\omega} B \qquad \mbox{ along \quad $y=0$}\,.
\end{equation}
Here $A$ and $B$ are arbitrary real numbers. Also, when we write 'along $y=0$' we mean its restriction to the domain, so $\{ y=0 \} \cap U$.
\subsection{K-matrix} Define for $A,\,B \in \mathbb{R}$ and $\lambda \in \mathbb{C}^\ast$ the K-matrix
\begin{equation} \label{eq:K}
	K(\lambda)  = \begin{pmatrix}
 4 A  -4 B \lambda  & \lambda - \lambda^{-1} \\
 \lambda -  \lambda^{-1} & 4 A  - 4 B \lambda^{-1}
\end{pmatrix}
\end{equation}
Note that $K(\pm 1) = 4(A \mp B)\,\mathbbm{1}$, as well as
\begin{equation} \label{eq:Kproperties}
	K(\lambda) = K(\lambda)^{\,t} \quad \mbox{ and } \quad \overline{K(\bar\lambda)} = K(\lambda) \quad \mbox{ and } \quad K(\lambda^{-1}) = \mathrm{adj}\,K(\lambda)\,. 
\end{equation}
\begin{lemma} \label{lem:Ucomm1}
For $\lambda \neq \pm 1$ we have $\omega_y = e^\omega A + e^{-\omega} B$ along $y=0$ if and only if 
\begin{equation} \label{eq:symU}
	K\,\mathbf{U}_\lambda =  \mathbf{U}_{\lambda^{-1}} \,K  \,.
\end{equation}
\end{lemma}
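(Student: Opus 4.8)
The plan is to verify the matrix identity $K\,\mathbf{U}_\lambda = \mathbf{U}_{\lambda^{-1}}\,K$ by direct computation and read off exactly which scalar equation on the entries is equivalent to it. First I would substitute the explicit form of $\mathbf{U}_\lambda$ from the normalization above and the explicit $K=K(\lambda)$ from \eqref{eq:K} into both sides. Both products are $2\times 2$ matrices whose entries are Laurent polynomials in $\lambda$ with coefficients built from $e^{\pm\omega}$, $\omega_y$, $A$ and $B$. Since $\mathbf{U}_\lambda$ carries an overall factor $\tfrac{\mi}{8}$ and $K$ carries an overall factor of $4$, it is convenient to clear these constants at the outset and compare $4\,K\mathbf{U}_\lambda$ with $4\,\mathbf{U}_{\lambda^{-1}}K$ up to the common scalar.

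Next I would compute the four entries of the difference $K\,\mathbf{U}_\lambda - \mathbf{U}_{\lambda^{-1}}\,K$. Using the symmetry $K(\lambda)=K(\lambda)^t$ from \eqref{eq:Kproperties} together with the relation $\overline{\mathbf{U}_{\bar\lambda}}^t = -\mathbf{U}_{\lambda^{-1}}$ noted in the text, and the fact that along $y=0$ all entries are real, one expects the difference to be (anti-)symmetric, so that really only two of the four entries are independent; in fact I anticipate that the off-diagonal entries of the difference vanish identically in $\lambda$ (they should be automatically proportional to $\lambda-\lambda^{-1}$ times something symmetric), and that the diagonal entries are each a Laurent polynomial in $\lambda$ whose vanishing is equivalent to a single scalar condition. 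Collecting powers of $\lambda$ in that diagonal entry, the $\lambda^{\pm 1}$ and $\lambda^{\pm 2}$ coefficients should vanish identically, leaving the $\lambda^0$ coefficient proportional to $-2\omega_y + 2(e^\omega A + e^{-\omega}B)$, i.e. proportional to $\bigl(\omega_y - e^\omega A - e^{-\omega}B\bigr)$.

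Having isolated that scalar factor, the lemma follows: \eqref{eq:symU} holds for some (equivalently every) $\lambda\neq\pm 1$ exactly when $\omega_y - e^\omega A - e^{-\omega}B = 0$ along $y=0$. The restriction $\lambda\neq\pm 1$ enters precisely because $K(\pm 1)=4(A\mp B)\,\mathbbm{1}$ is a scalar matrix, so at $\lambda=\pm1$ the identity \eqref{eq:symU} degenerates to $\mathbf{U}_{1}=\mathbf{U}_{1}$ (resp. with $-1$) and carries no information — this is the one point where care is needed, and it is why the hypothesis is stated as it is. The main obstacle is purely organizational: keeping the bookkeeping of the Laurent coefficients straight so that the spurious powers of $\lambda$ are seen to cancel and the single geometric condition emerges cleanly; there is no conceptual difficulty, only the need to perform the $2\times2$ multiplication carefully and group terms by powers of $\lambda$.
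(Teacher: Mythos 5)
Your overall strategy is exactly the paper's: compute the difference $K\,\mathbf{U}_\lambda-\mathbf{U}_{\lambda^{-1}}\,K$ entrywise and isolate the single scalar condition. However, your predictions about where that condition lives are backwards, and as stated they are false. Carrying out the multiplication, the \emph{diagonal} entries of the difference cancel identically (the $(1,1)$ entry of $K\mathbf{U}_\lambda$ is $-ap+cr$ and of $\mathbf{U}_{\lambda^{-1}}K$ is $-pa+rc$, writing $K=\bigl(\begin{smallmatrix}a&c\\ c&d\end{smallmatrix}\bigr)$ and using that the $(1,2)$ entry of $\mathbf{U}_{\lambda^{-1}}$ equals the $(2,1)$ entry of $\mathbf{U}_\lambda$), while the \emph{off-diagonal} entries carry all the information. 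The paper's one-line computation gives
\begin{equation*}
K\,\mathbf{U}_\lambda-\mathbf{U}_{\lambda^{-1}}\,K
=\tfrac{\mi}{2}\,(\lambda^{-1}-\lambda)\bigl(\omega_y-(e^\omega A+e^{-\omega}B)\bigr)
\begin{pmatrix}0&-1\\ 1&0\end{pmatrix},
\end{equation*}
so the difference is antisymmetric and purely off-diagonal, not symmetric with vanishing off-diagonal part as you anticipate. Likewise your claim that the condition shows up in a $\lambda^0$ coefficient is wrong: the off-diagonal entry is $(\lambda^{-1}-\lambda)$ times a $\lambda$-independent scalar, so the condition appears in the $\lambda^{\pm1}$ coefficients.

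These mispredictions do not doom the method --- had you actually performed the $2\times2$ multiplication you would have found the correct structure and reached the same conclusion --- but as written the proposal asserts false intermediate facts, so it is not a correct proof. One point you do get right, and which complements the paper's formula: the restriction $\lambda\neq\pm1$ is needed both because the prefactor $(\lambda^{-1}-\lambda)$ kills the condition there, and because at $\lambda=\pm1$ one has $\lambda^{-1}=\lambda$ with $K(\pm1)$ scalar, so \eqref{eq:symU} degenerates to a tautology. Redo the entry computation and the argument closes.
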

\begin{proof}
Compute $K \,\mathbf{U}_\lambda - \mathbf{U}_{\lambda^{-1}} \,K  = \tfrac{\mathbbm{i}}{2}  (\lambda^{-1} -\lambda ) \left( \omega_y -  \left(e^\omega A + e^{-\omega} B \right)\right) \bigl( 
\begin{smallmatrix}
 0 & -1\\
 1 & 0 \\
\end{smallmatrix}
\bigr)$.
\end{proof}
\begin{lemma} \label{lem:Fswitch2}
Suppose $\tfrac{d}{dx} \mathbf{F}_\lambda = \mathbf{F}_\lambda \mathbf{U}_\lambda,\,\mathbf{F}_\lambda(0) = \mathbbm{1}$. Then along $y=0$ the condition \eqref{eq:symU} holds if and only if 
\begin{equation} \label{eq:KF}
	 K\,\mathbf{F}_\lambda =  \mathbf{F}_{\lambda^{-1}}\,K \qquad \mbox{ along \quad $y=0$}\,.
\end{equation}
\end{lemma}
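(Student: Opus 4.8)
The plan is to show that the two matrix-valued functions $K\mathbf{F}_\lambda$ and $\mathbf{F}_{\lambda^{-1}}K$, regarded as functions of $x$ along $y=0$, satisfy the same linear ODE and the same initial condition, so that they agree by uniqueness. First I would record that at $x=0$ both sides equal $K$, since $\mathbf{F}_\lambda(0)=\mathbbm 1$; this handles the initial condition. For the differential equation, I would differentiate $K\mathbf{F}_\lambda$ in $x$ (with $K$ constant in $x$) to get $\tfrac{d}{dx}(K\mathbf{F}_\lambda)=K\mathbf{F}_\lambda\mathbf{U}_\lambda$, and differentiate $\mathbf{F}_{\lambda^{-1}}K$ to get $\tfrac{d}{dx}(\mathbf{F}_{\lambda^{-1}}K)=\mathbf{F}_{\lambda^{-1}}\mathbf{U}_{\lambda^{-1}}K$. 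Assuming \eqref{eq:symU}, i.e.\ $\mathbf{U}_{\lambda^{-1}}K = K\mathbf{U}_\lambda$, the second right-hand side becomes $\mathbf{F}_{\lambda^{-1}}K\mathbf{U}_\lambda$; hence both $G_1:=K\mathbf{F}_\lambda$ and $G_2:=\mathbf{F}_{\lambda^{-1}}K$ solve $\tfrac{d}{dx}G = G\,\mathbf{U}_\lambda$. By uniqueness of solutions to this linear system with the common initial value $K$, we conclude $G_1=G_2$ for all $x$ with $(x,0)\in U$, which is \eqref{eq:KF}.

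For the converse, I would start from \eqref{eq:KF} holding identically along $y=0$ and differentiate in $x$: $K\mathbf{F}_\lambda\mathbf{U}_\lambda = \mathbf{F}_{\lambda^{-1}}\mathbf{U}_{\lambda^{-1}}K$. Substituting \eqref{eq:KF} on the left to replace $K\mathbf{F}_\lambda$ by $\mathbf{F}_{\lambda^{-1}}K$ gives $\mathbf{F}_{\lambda^{-1}}K\mathbf{U}_\lambda = \mathbf{F}_{\lambda^{-1}}\mathbf{U}_{\lambda^{-1}}K$, and cancelling the invertible factor $\mathbf{F}_{\lambda^{-1}}$ on the left yields $K\mathbf{U}_\lambda=\mathbf{U}_{\lambda^{-1}}K$, which is \eqref{eq:symU}. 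One subtlety to note here is that \eqref{eq:KF} is only assumed along the slice $y=0$, so only the $x$-derivative is legitimately available; fortunately that is exactly what is needed, since $\mathbf{U}_\lambda$ is the $x$-part of the connection.

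I do not expect a serious obstacle: the argument is the standard "two solutions of the same linear ODE with the same initial data" uniqueness trick, together with the equivalence \eqref{eq:symU} from Lemma \ref{lem:Ucomm1}. The only points requiring a little care are that $K=K(\lambda)$ is genuinely independent of $x$ (it depends only on $\lambda$, $A$, $B$), that $\mathbf{F}_{\lambda^{-1}}$ is invertible so it can be cancelled in the converse direction, and that differentiating the identity \eqref{eq:KF} is permissible because both sides are smooth in $x$ along $y=0$. A remark worth making is that the statement is implicitly restricted to $\lambda\neq\pm1$, inheriting this from Lemma \ref{lem:Ucomm1}, although at $\lambda=\pm1$ both \eqref{eq:symU} and \eqref{eq:KF} hold trivially since $K(\pm1)$ is a multiple of the identity and $\mathbf{F}_{\pm1}$ then commutes with it.
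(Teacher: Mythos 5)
Your proof is correct and follows essentially the same route as the paper: a uniqueness argument for the linear ODE $\tfrac{d}{dx}G = G\,\mathbf{U}_\lambda$ in the forward direction, and differentiation of \eqref{eq:KF} plus cancellation of the invertible factor $\mathbf{F}_{\lambda^{-1}}$ in the converse. The only (harmless) difference is that you compare $K\mathbf{F}_\lambda$ with $\mathbf{F}_{\lambda^{-1}}K$ directly, whereas the paper conjugates and works with $\mathbf{G}_\lambda = K\mathbf{F}_\lambda K^{-1}$, which additionally presumes $K(\lambda)$ invertible; your variant sidesteps that.
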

\begin{proof}
Suppose along $y=0$ the condition \eqref{eq:symU} holds. Consider $\mathbf{G}_\lambda = K\,\mathbf{F}_\lambda\,K^{-1}$. Then $\tfrac{d}{dx} \mathbf{G}_\lambda = \mathbf{G}_\lambda \mathbf{U}_{\lambda^{-1}},\,\mathbf{G}_\lambda (0) = \mathbbm{1}$. By uniqueness of solutions $\mathbf{G}_\lambda = \mathbf{F}_{\lambda^{-1}}$, and thus $K\,\mathbf{F}_\lambda\,K^{-1} = \mathbf{F}_{\lambda^{-1}}$.

Conversely, suppose \eqref{eq:KF} holds. Differentiating with respect to $x$ gives $K\,\mathbf{F}_\lambda' =  \mathbf{F}_{\lambda^{-1}}'\,K$ and hence $K\,\mathbf{F}_\lambda \mathbf{U}_\lambda = \mathbf{F}_{\lambda^{-1}}\mathbf{U}_{\lambda^{-1}} \,K$. Using  \eqref{eq:KF} again gives the claim.
\end{proof}	
\begin{corollary}
Let $\mathbf{F}_\lambda$ be an extended frame satisfying \eqref{eq:KF} along $y=0$, and $\zeta_\lambda = \mathbf{F}_\lambda^{-1} \xi_\lambda \mathbf{F}_\lambda$ a polynomial Killing field. Then along 
$y=0$ we have 
\begin{equation} \label{eq:PfKsym1}
	K \zeta_\lambda + \overline{\zeta_{\bar\lambda}}^t K = \mathbf{F}_{\lambda^{-1}}^{-1} \bigl( \, K\,\xi_\lambda + \overline{\xi_{\bar{\lambda}}}^t K \,\bigr) \,\mathbf{F}_\lambda\,.
\end{equation}
\end{corollary}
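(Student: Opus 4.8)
The plan is to substitute the defining relation $\zeta_\lambda = \mathbf{F}_\lambda^{-1}\,\xi_\lambda\,\mathbf{F}_\lambda$ into the left-hand side of \eqref{eq:PfKsym1} and then to commute $K$ past the frames, using \eqref{eq:KF} together with the reality condition $\overline{\mathbf{F}_{1/\bar\lambda}}^t = \mathbf{F}_\lambda^{-1}$, everything understood along $\{y=0\}\cap U$.

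For the first summand, taking inverses in \eqref{eq:KF} and rearranging gives $K\,\mathbf{F}_\lambda^{-1} = \mathbf{F}_{\lambda^{-1}}^{-1}\,K$ along $y=0$, so that
\[
	K\,\zeta_\lambda \;=\; K\,\mathbf{F}_\lambda^{-1}\,\xi_\lambda\,\mathbf{F}_\lambda \;=\; \mathbf{F}_{\lambda^{-1}}^{-1}\,K\,\xi_\lambda\,\mathbf{F}_\lambda\,.
\]
For the second summand, replacing $\lambda$ by $\lambda^{-1}$ in the reality condition yields $\overline{\mathbf{F}_{\bar\lambda}}^t = \mathbf{F}_{\lambda^{-1}}^{-1}$, whence
\[
	\overline{\zeta_{\bar\lambda}}^t \;=\; \overline{\mathbf{F}_{\bar\lambda}}^t\,\overline{\xi_{\bar\lambda}}^t\,\bigl(\overline{\mathbf{F}_{\bar\lambda}}^t\bigr)^{-1} \;=\; \mathbf{F}_{\lambda^{-1}}^{-1}\,\overline{\xi_{\bar\lambda}}^t\,\mathbf{F}_{\lambda^{-1}}\,,
\]
and then, using \eqref{eq:KF} once more in the form $\mathbf{F}_{\lambda^{-1}}\,K = K\,\mathbf{F}_\lambda$,
\[
	\overline{\zeta_{\bar\lambda}}^t\,K \;=\; \mathbf{F}_{\lambda^{-1}}^{-1}\,\overline{\xi_{\bar\lambda}}^t\,K\,\mathbf{F}_\lambda\,.
\]
Adding the last two displays and factoring $\mathbf{F}_{\lambda^{-1}}^{-1}$ out on the left and $\mathbf{F}_\lambda$ out on the right produces \eqref{eq:PfKsym1}.

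There is no genuine obstacle here; the argument is a bookkeeping exercise in matrix algebra. The two points that require attention are tracking the domains of validity — \eqref{eq:KF}, and hence \eqref{eq:PfKsym1}, hold only along $\{y=0\}\cap U$, whereas the reality condition is valid on all of $U$ — and carrying the operation $\overline{(\,\cdot\,)}^t$ through the products in the correct order, so that $K$ emerges on the correct side in each line. I would also point out that \eqref{eq:PfKsym1} is precisely the identity transporting the K-symmetry between a polynomial Killing field and its potential: the expression $K\,\xi_\lambda + \overline{\xi_{\bar\lambda}}^t K$ vanishes if and only if $K\,\zeta_\lambda + \overline{\zeta_{\bar\lambda}}^t K$ vanishes along $y=0$, which is the form exploited in the sequel.
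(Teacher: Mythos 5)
Your proof is correct and is exactly the computation the paper intends (the corollary is stated without proof there): conjugating $K$ through $\mathbf{F}_\lambda^{-1}$ via \eqref{eq:KF} for the first summand, and using the reality condition $\overline{\mathbf{F}_{\bar\lambda}}^t=\mathbf{F}_{\lambda^{-1}}^{-1}$ together with \eqref{eq:KF} for the second. Your remarks on domains of validity and on the equivalence of the two K-symmetries are also accurate.
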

A polynomial Killing field $\zeta_\lambda$ is K-symmetric (along $y=0$) if 
\begin{equation}\label{eq:PfKsym2}
	K \zeta_\lambda + \overline{\zeta_{\bar\lambda}}^t K = 0\,.
\end{equation}
By \eqref{eq:PfKsym1} this holds if and only if its potential $\xi_\lambda$ is K-symmetric
\begin{equation} \label{eq:Sklyanin-potentials}
	K\,\xi_\lambda + \overline{\xi_{\bar{\lambda}}}^t K = 0\,.
\end{equation}
\subsection{K-symmetry and the Symes map} The Symes map $\xi_\lambda \mapsto \mathbf{F}_\lambda$  utilizes the Iwasawa factorization \cite{PreS} at each $z \in U$, and we write
\begin{equation} \label{eq:Iwasawa}
	\mathbf{\Phi}_\lambda (z) = \exp [z\,\xi_\lambda ] = \mathbf{F}_\lambda (z,\bar z) \,\mathbf{B}_\lambda (z,\bar z)\,.
\end{equation}
With $\mathbf{F}_\lambda^{-1} d\,\mathbf{F}_\lambda = \mathbf{\Omega}_\lambda$ we have the gauge relation
\begin{equation} \label{eq:gauge}
	\xi_\lambda = \mathbf{B}_\lambda^{-1} \mathbf{\Omega}_\lambda \mathbf{B}_\lambda + \mathbf{B}_\lambda^{-1} d\,\mathbf{B}_\lambda\,.
\end{equation}
Next, we determine how \rm{K}-symmetry \eqref{eq:Sklyanin-potentials} on a potential descends to the maps $\mathbf{\Phi}_\lambda$ and $\mathbf{B}_\lambda$. Lemma  \ref{lem:Fswitch2} already dealt with the extended frame $\mathbf{F}_\lambda$, as its \rm{K}-symmetry \eqref{eq:KF} came directly from its differential equation.
\begin{lemma} \label{th:KPhi}
Let $\xi_\lambda$ be a K-symmetric potential. Then  
\begin{equation} \label{eq:PhiK-symmetry}
	K\,\mathbf{\Phi}_\lambda (z) = \overline{\mathbf{\Phi}_{\bar\lambda} (\bar z)} ^{t^{-1}} K \qquad \mbox{for all $z \in U$}\,.
\end{equation}
\end{lemma}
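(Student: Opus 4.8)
The plan is to exploit the series expansion of $\mathbf{\Phi}_\lambda(z) = \exp[z\,\xi_\lambda]$ together with the K-symmetry hypothesis \eqref{eq:Sklyanin-potentials} on $\xi_\lambda$. First I would record the two elementary consequences of \eqref{eq:Sklyanin-potentials} that drive everything: if $K\,\xi_\lambda = -\overline{\xi_{\bar\lambda}}^t K$, then inductively $K\,\xi_\lambda^n = (-1)^n \overline{\xi_{\bar\lambda}}^{t\,n} K$ for every $n \geq 0$ (the case $n=0$ being trivial and the inductive step a one-line computation pushing $K$ across one factor of $\xi_\lambda$ at a time). Summing the exponential series and being careful with the sign bookkeeping, this gives
\[
	K\,\exp[z\,\xi_\lambda] = \exp[-z\,\overline{\xi_{\bar\lambda}}^{\,t}]\,K\,.
\]
Here the $(-1)^n$ from the symmetry relation gets absorbed by replacing $z\,\xi_\lambda$ with $-z$ times the transposed-conjugate, so no conjugation of $z$ is yet visible; that will come from the next step.

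Next I would identify the right-hand side with $\overline{\mathbf{\Phi}_{\bar\lambda}(\bar z)}^{\,t^{-1}}$. Observe that $\overline{\mathbf{\Phi}_{\bar\lambda}(\bar z)} = \overline{\exp[\bar z\,\xi_{\bar\lambda}]} = \exp[z\,\overline{\xi_{\bar\lambda}}]$ since conjugation is continuous and ring-homomorphic on the series and $\overline{\bar z} = z$. Taking the transpose reverses nothing at the level of a single exponential, so $\overline{\mathbf{\Phi}_{\bar\lambda}(\bar z)}^{\,t} = \exp[z\,\overline{\xi_{\bar\lambda}}^{\,t}]$, whence $\overline{\mathbf{\Phi}_{\bar\lambda}(\bar z)}^{\,t^{-1}} = \exp[-z\,\overline{\xi_{\bar\lambda}}^{\,t}]$. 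Comparing with the displayed identity above yields exactly \eqref{eq:PhiK-symmetry}. One should note in passing that $K(\lambda)$ is independent of $z$, so it commutes with $d/dz$ and there is no subtlety differentiating under the series; and $K$ need not be invertible at $\lambda = \pm 1$, but the identity \eqref{eq:PhiK-symmetry} is an equality of entire functions in $\lambda \in \C^\ast$, so it extends by continuity from $\lambda \neq \pm 1$ if one prefers to argue there first.

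I do not expect a serious obstacle here; the statement is essentially a formal computation with power series. The one place to be careful is the interaction of the three operations (the involution $\lambda \mapsto \bar\lambda$ on the loop parameter, complex conjugation of matrix entries, and transposition) with the sign coming from \eqref{eq:Sklyanin-potentials}: it is easy to drop or double-count the $(-1)^n$, or to conjugate $z$ one time too many or too few. A clean way to avoid this is to set $\widehat{\xi}_\lambda := \overline{\xi_{\bar\lambda}}^{\,t}$ and note \eqref{eq:Sklyanin-potentials} reads $K\,\xi_\lambda = -\widehat{\xi}_\lambda\,K$, that the hat operation is a conjugate-linear anti-involution compatible with the $\lambda$-dependence in the sense $\widehat{\exp[z\,\xi_\lambda]} = \exp[z\,\widehat{\xi}_\lambda]$, and then the whole argument collapses to $K\exp[z\xi_\lambda] = \exp[-z\widehat\xi_\lambda]K = \bigl(\widehat{\exp[z\,\xi_\lambda]}\bigr)^{-1}K$, which is \eqref{eq:PhiK-symmetry} rewritten. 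Alternatively, one can bypass the series entirely: both sides of \eqref{eq:PhiK-symmetry}, viewed as functions of $z$, satisfy the same linear ODE $\tfrac{d}{dz}\mathbf{G} = \mathbf{G}\,\xi_\lambda$ — check this for the right-hand side using \eqref{eq:Sklyanin-potentials} — with the same initial value $K$ at $z=0$, so they agree by uniqueness, exactly as in the proof of Lemma \ref{lem:Fswitch2}.
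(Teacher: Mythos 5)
Your proposal is correct and follows essentially the same route as the paper: the paper's one-line proof also computes $\overline{\mathbf{\Phi}_{\bar\lambda}(\bar z)}^{\,t} = \exp[z\,\overline{\xi_{\bar\lambda}}^{\,t}] = \exp[-z\,K\,\xi_\lambda K^{-1}] = K\,\mathbf{\Phi}_\lambda^{-1}(z)\,K^{-1}$, which is your power-series manipulation packaged as conjugation of the exponent. Your remark that the identity can be written without $K^{-1}$ (so no issue at $\lambda = \pm 1$) is a fair minor refinement, but the argument is the same.
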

\begin{proof}
We compute $\overline{\mathbf{\Phi}_{\bar\lambda}(\bar z) }^{\,t} = \exp[z \,\overline{\xi_{\bar\lambda}}^{\,t}] = \exp[ - z\,K \,\xi_\lambda K^{-1} ] = K \,\mathbf{\Phi}_{\lambda}^{-1}(z)\, K^{-1}$.
\end{proof}
\begin{lemma}
Let $\xi_\lambda$ be a {\rm{K}}-symmetric potential, and $ \exp [z\,\xi_\lambda ] = \mathbf{F}_\lambda  \,\mathbf{B}_\lambda$ the pointwise Iwasawa factorization for $z \in U$. Assume that 
$K\,\mathbf{F}_\lambda =  \mathbf{F}_{\lambda^{-1}}\,K$ along $y=0$. Then 
\begin{equation}\label{eq:KB}
	K\,\mathbf{B}_\lambda = \overline{\mathbf{B}_{\bar\lambda}} ^{t^{-1}} K \qquad \mbox{ along \quad $y=0$}\,.
%= \mathbf{F}_\lambda (z) \mathbf{B}_\lambda (z)
\end{equation}
\end{lemma}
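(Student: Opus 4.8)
The plan is to combine the three K-symmetry relations already established: relation \eqref{eq:PhiK-symmetry} for $\mathbf{\Phi}_\lambda$, relation \eqref{eq:KF} for $\mathbf{F}_\lambda$ along $y=0$, and the pointwise Iwasawa factorization $\mathbf{\Phi}_\lambda = \mathbf{F}_\lambda \mathbf{B}_\lambda$, in order to isolate $\mathbf{B}_\lambda$. First I would restrict everything to $\{y=0\}\cap U$, where by hypothesis $\mathbf{F}_\lambda$ satisfies \eqref{eq:KF}. Starting from $\mathbf{B}_\lambda = \mathbf{F}_\lambda^{-1}\mathbf{\Phi}_\lambda(z)$, I would left-multiply by $K$ and push $K$ through each factor: by \eqref{eq:KF} we have $K\mathbf{F}_\lambda^{-1} = \mathbf{F}_{\lambda^{-1}}^{-1}K$, and by Lemma \ref{th:KPhi} (evaluated at real $z$, so $\bar z = z$) we have $K\mathbf{\Phi}_\lambda(z) = \overline{\mathbf{\Phi}_{\bar\lambda}(z)}^{\,t^{-1}}K$. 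Chaining these gives $K\mathbf{B}_\lambda = \mathbf{F}_{\lambda^{-1}}^{-1}\,\overline{\mathbf{\Phi}_{\bar\lambda}(z)}^{\,t^{-1}}\,K$.

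Next I would recognise the middle expression as an Iwasawa-type factor for the parameter $\lambda^{-1}$. Taking the Iwasawa factorization at $\bar\lambda$, namely $\mathbf{\Phi}_{\bar\lambda} = \mathbf{F}_{\bar\lambda}\mathbf{B}_{\bar\lambda}$, and applying the conjugate-transpose-inverse, one gets $\overline{\mathbf{\Phi}_{\bar\lambda}}^{\,t^{-1}} = \overline{\mathbf{F}_{\bar\lambda}}^{\,t^{-1}}\,\overline{\mathbf{B}_{\bar\lambda}}^{\,t^{-1}}$. Using the reality $\overline{\mathbf{F}_{1/\bar\lambda}}^{\,t} = \mathbf{F}_\lambda^{-1}$ from the excerpt (so $\overline{\mathbf{F}_{\bar\lambda}}^{\,t^{-1}} = \mathbf{F}_{\lambda^{-1}}$), this becomes $\overline{\mathbf{\Phi}_{\bar\lambda}}^{\,t^{-1}} = \mathbf{F}_{\lambda^{-1}}\,\overline{\mathbf{B}_{\bar\lambda}}^{\,t^{-1}}$. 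Substituting back, the factors $\mathbf{F}_{\lambda^{-1}}^{-1}$ and $\mathbf{F}_{\lambda^{-1}}$ cancel, leaving exactly $K\mathbf{B}_\lambda = \overline{\mathbf{B}_{\bar\lambda}}^{\,t^{-1}}K$, which is \eqref{eq:KB}.

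The main obstacle, and the point requiring care, is the bookkeeping of which loop-group splitting each factor belongs to: $\mathbf{F}$ takes values in the (twisted) loop group of $\SU$ while $\mathbf{B}$ takes values in the subgroup of loops extending holomorphically to the disc with positive-real diagonal at $\lambda=0$, and I must check that the manipulations respect these subgroups so that the factorizations invoked are the genuine Iwasawa ones (in particular that $\overline{\mathbf{B}_{\bar\lambda}}^{\,t^{-1}}$ lands in the correct triangular subgroup, which follows from the standard symmetry of the Iwasawa splitting under $\lambda\mapsto 1/\bar\lambda$ together with conjugate-transpose-inverse). A secondary subtlety is that Lemma \ref{th:KPhi} is stated for all $z\in U$ with the argument $\bar z$ appearing on the right; restricting to $y=0$ collapses $\bar z = z$, so no extra work is needed there, but it is worth noting explicitly. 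Once the subgroup membership is confirmed, uniqueness of the Iwasawa factorization is what forces the identification and the proof is a short chain of substitutions.
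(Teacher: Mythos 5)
Your argument is correct and is essentially the paper's own proof: both combine Lemma \ref{th:KPhi}, the Iwasawa factorization $\mathbf{\Phi}_\lambda = \mathbf{F}_\lambda\mathbf{B}_\lambda$, the reality relation $\overline{\mathbf{F}_{\bar\lambda}}^{\,t^{-1}} = \mathbf{F}_{\lambda^{-1}}$, and the hypothesis \eqref{eq:KF} to cancel the $\mathbf{F}_{\lambda^{-1}}$ factors directly. The closing appeal to uniqueness of the Iwasawa splitting is unnecessary, since the identification already follows from the explicit cancellation.
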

\begin{proof}
By Lemma \ref{th:KPhi} we have $K\,\mathbf{\Phi}_\lambda = \overline{\mathbf{\Phi}_{\bar\lambda}} ^{t^{-1}} K$ and thus  
\begin{equation*} 
	K\,\mathbf{F}_\lambda \,\mathbf{B}_\lambda = \overline{\mathbf{F}_{\bar\lambda}} ^{t^{-1}}  \overline{\mathbf{B}_{\bar\lambda}} ^{t^{-1}} K \,.
\end{equation*}
Using the assumption on $\mathbf{F}_\lambda$ along $y=0$, and that $\overline{\mathbf{F}_{\bar{\lambda}}}^t = \mathbf{F}_{\lambda^{-1}}^{-1}$ we obtain \eqref{eq:KB}.
\end{proof}
%
%%%%%%%%%%%%%%%%%%%%%%%%%%
%
\subsection{K-symmetry and the associated family} Finally, we look at K-symmetry of the associated family. Let $\partial_\lambda$ denote differentiation with respect to $\lambda$. 
\begin{lemma} \label{th:KSymmetrySB}
Assume that $K\,\mathbf{F}_\lambda =  \mathbf{F}_{\lambda^{-1}}\,K$ along $y=0$. Then for the associated family \eqref{eq:SymBobenko} we have along $y=0$ that
\begin{equation} \label{eq:Ksym-f}
	K\,\mathbf{f}_\lambda + \overline{\mathbf{f}_\lambda}^t K = -2\mi\lambda H^{-1} \left( K( \partial_\lambda 	\mathbf{F}_\lambda ) -  (\partial_\lambda \mathbf{F}_{\lambda^{-1}}) K \right) \mathbf{F}_\lambda^{-1} \,.
\end{equation}
\end{lemma}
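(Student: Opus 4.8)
The plan is to substitute the Sym--Bobenko formula \eqref{eq:SymBobenko} into the left hand side of \eqref{eq:Ksym-f} and reorganize, the two ingredients being the reality condition $\overline{\mathbf{F}_{1/\bar\lambda}}^{\,t}=\mathbf{F}_\lambda^{-1}$ and the frame $\mathrm{K}$--symmetry $K\mathbf{F}_\lambda=\mathbf{F}_{\lambda^{-1}}K$ along $y=0$, which holds by hypothesis. The summand $K\mathbf{f}_\lambda$ needs nothing: left--multiplying \eqref{eq:SymBobenko} by $K$ gives $K\mathbf{f}_\lambda=-2\mi\lambda H^{-1}K(\partial_\lambda\mathbf{F}_\lambda)\mathbf{F}_\lambda^{-1}$, which is already the first term on the right of \eqref{eq:Ksym-f}. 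So the whole identity comes down to matching $\overline{\mathbf{f}_\lambda}^{\,t}K$ with the second term $2\mi\lambda H^{-1}(\partial_\lambda\mathbf{F}_{\lambda^{-1}})K\,\mathbf{F}_\lambda^{-1}$ along $y=0$.

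For that I would proceed in two complementary ways and reconcile them. On the one hand, differentiating the frame $\mathrm{K}$--symmetry \eqref{eq:KF} with respect to $\lambda$ (noting $K=K(\lambda)$) produces the combination $K(\partial_\lambda\mathbf{F}_\lambda)-(\partial_\lambda\mathbf{F}_{\lambda^{-1}})K$ on the nose, equal after multiplication by $\mathbf{F}_\lambda^{-1}$ to an explicit expression in $\partial_\lambda K$ conjugated by the frame. On the other hand, I would take the conjugate transpose of \eqref{eq:SymBobenko}, distributing it over the product so that it reverses the order and using $\overline{\mi}=-\mi$ and that $H$ is real, to write $\overline{\mathbf{f}_\lambda}^{\,t}=2\mi\bar\lambda H^{-1}\,\overline{\mathbf{F}_\lambda^{-1}}^{\,t}\,\overline{\partial_\lambda\mathbf{F}_\lambda}^{\,t}$; the two conjugate--transposed frame factors are then handled by $\overline{\mathbf{F}_\lambda^{-1}}^{\,t}=\mathbf{F}_{1/\bar\lambda}$ and by the $\lambda$--derivative of the reality condition, in which the reflection $\lambda\mapsto 1/\bar\lambda$ contributes a chain--rule factor $\bar\lambda^{-2}$ and interchanges $\partial_\lambda$ with the conjugate transpose. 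Along $y=0$ the reflected parameter becomes $\lambda^{-1}$, and the hypothesis $K\mathbf{F}_\lambda=\mathbf{F}_{\lambda^{-1}}K$ — equivalently $K\mathbf{F}_\lambda^{-1}=\mathbf{F}_{\lambda^{-1}}^{-1}K$ — lets one commute the outer $K$ through the frame factors so that the $K$'s telescope, leaving exactly $2\mi\lambda H^{-1}(\partial_\lambda\mathbf{F}_{\lambda^{-1}})K\,\mathbf{F}_\lambda^{-1}$.

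The step I expect to be the main obstacle is the bookkeeping of the two commuting involutions of the spectral parameter that are simultaneously at work: the inversion $\lambda\mapsto\lambda^{-1}$ coming from the $\mathrm{K}$--symmetry of the frame, and $\lambda\mapsto\bar\lambda$ (equivalently $\lambda\mapsto 1/\bar\lambda$) coming from the conjugate transpose in the $\su$--reality, which are identified only along $y=0$ on the unit circle. Keeping $\partial_\lambda$ consistent across both — in particular matching the chain--rule factor that appears on differentiating the reflection against the explicit factor $\lambda$ in front of \eqref{eq:SymBobenko}, so that all powers of $\lambda$ and all signs coming from $\mi$ cancel as they must — is the delicate part; once it is arranged, what remains is a routine $2\times2$ matrix computation.
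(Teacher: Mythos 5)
Your decomposition is sound and in fact streamlines the paper's argument. The paper's proof substitutes $\mathbf{F}_\lambda = K^{-1}\mathbf{F}_{\lambda^{-1}}K$ into \eqref{eq:SymBobenko}, which generates extra $\partial_\lambda K$ terms, and then cancels them against the $\lambda$-derivative of \eqref{eq:KF}, namely \eqref{eq:DiffKF}. Your observation that $K\mathbf{f}_\lambda$ already \emph{is} the first term on the right of \eqref{eq:Ksym-f} shows this detour is avoidable: once the hypothesis in the form $K\mathbf{F}_\lambda^{-1}=\mathbf{F}_{\lambda^{-1}}^{-1}K$ is used to move $K$ past the frame, \eqref{eq:Ksym-f} is exactly equivalent to the single relation
\begin{equation*}
\overline{\mathbf{f}_\lambda}^{\,t} \;=\; 2\,\mi\lambda H^{-1}\bigl(\partial_\lambda \mathbf{F}_{\lambda^{-1}}\bigr)\mathbf{F}_{\lambda^{-1}}^{-1}\,,
\end{equation*}
i.e.\ to the reality $\overline{\mathbf{f}_\lambda}^{\,t}=-\mathbf{f}_{\lambda^{-1}}$ of the associated family, and the differentiated symmetry \eqref{eq:DiffKF} is then not needed at all. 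Both routes therefore rest on this one reality identity; the paper asserts it without proof, while you propose to derive it by conjugate-transposing \eqref{eq:SymBobenko} and invoking $\overline{\mathbf{F}_{1/\bar\lambda}}^{\,t}=\mathbf{F}_\lambda^{-1}$. Two cautions. First, drop the plan to ``reconcile'' with the differentiated frame symmetry --- that only reintroduces the $\partial_\lambda K$ bookkeeping your splitting has already eliminated. Second, the conjugate transpose must be taken with the bar understood as entrywise conjugation composed with $\lambda\mapsto\bar\lambda$ (the convention the paper uses when writing $\overline{\mathbf{U}_{\bar\lambda}}^{\,t}$ and $\overline{\xi_{\bar\lambda}}^{\,t}$); a naive conjugate transpose at fixed $\lambda$ lands on $-\mathbf{f}_{1/\bar\lambda}$ rather than on the required expression in $\lambda^{-1}$. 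With that convention the chain-rule factor $-\lambda^{-2}$ from $\partial_\lambda(\mathbf{F}_{\lambda^{-1}})$ and the sign from $\overline{\mi}=-\mi$ cancel against the prefactor $\lambda$, exactly as you anticipate --- but this is the one step you flag and do not carry out, and it is the entire analytic content of the lemma, so your write-up is a correct plan rather than a complete proof until that computation is displayed.
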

\begin{proof}
Differentiating $K\,\mathbf{F}_\lambda =  \mathbf{F}_{\lambda^{-1}}\,K$ with respect to $\lambda$ gives
\begin{equation} \label{eq:DiffKF}
	 \mathbf{F}_{\lambda^{-1}}\,\partial_\lambda K - \partial_\lambda K\,\mathbf{F}_\lambda = K\partial_\lambda \mathbf{F}_\lambda  -  \partial_\lambda \mathbf{F}_{\lambda^{-1}}\,K\,.
\end{equation}
Inserting $\mathbf{F}_\lambda = K^{-1} \mathbf{F}_{\lambda^{-1}} K$ into \eqref{eq:SymBobenko} yields
\[
	\mathbf{f}_\lambda = K^{-1} \mathbf{f}_{\lambda^{-1}} K  - 2\mi\lambda H^{-1} \left( K^{-1} \mathbf{F}_{\lambda^{-1}} \partial_\lambda K \mathbf{F}_\lambda^{-1}  - K^{-1} \partial_\lambda K \right)
\]
and thus 
\[
	K \mathbf{f}_\lambda  - \mathbf{f}_{\lambda^{-1}} K  	= - 2\mi\lambda H^{-1} \left( \mathbf{F}_{\lambda^{-1}} \partial_\lambda K   -  \partial_\lambda K \mathbf{F}_\lambda \right) \mathbf{F}_\lambda^{-1}\,.
\]
Using \eqref{eq:DiffKF} and $\mathbf{f}_{\lambda^{-1}} = -  \overline{\mathbf{f}_\lambda}^t$ proves the claim.
\end{proof}
We say that an associated family is K-symmetric along $y=0$ if 
\begin{equation*}
	K\,\mathbf{f}_\lambda + \overline{\mathbf{f}_\lambda}^t K = 0\,.
\end{equation*}
By equation \eqref{eq:Ksym-f} this holds if and only if $K( \partial_\lambda 	\mathbf{F}_\lambda ) = (\partial_\lambda \mathbf{F}_{\lambda^{-1}}) K$. 
% 
%%%%%%%%%%%%%%%%%%%%%%%%%%%%%%%%
%
\section{K-symmetric potentials}
Expanding the matrix of a finite-gap potential as 
\[
	\xi_\lambda = \sum_{k=-1}^d \hat{\xi}_k \lambda^k\,,
\]
then its coefficient matrices $\hat{\xi}_k \in \mathrm{sl}_2 (\C)$ satisfy the \emph{reality condition}
\begin{equation} \label{eq:potential-reality}
	\hat{\xi}_k = - \overline{\hat{\xi\,}}_{d-k-1}^t \quad -1 \leq k \leq d\,.
\end{equation}
To match up with the structure of $\mathbf{U}_\lambda$ we have
\begin{equation} \label{eq:residue}
	\hat{\xi}_{-1} \in \begin{pmatrix} 0 & \mi\R_+ \\ 0 & 0 \end{pmatrix}\,,
\end{equation}
which ensures that also $\hat{\xi}_d \neq 0$. We say that $\xi_\lambda$ has \emph{degree} $d$. Let us write
\begin{equation*}
	\xi_\lambda = \begin{pmatrix} \alpha_\lambda & \beta_\lambda \\ \gamma_\lambda & -\alpha_\lambda \end{pmatrix}
\end{equation*}
with entries
\begin{equation*} %\label{eq:polynomials}
	 \alpha_\lambda = \sum_{k=0}^{d-1} \alpha_k \lambda^k \,,\qquad \beta_\lambda = \sum_{k=-1}^{d-1} \beta_k \lambda^k \,\,,\qquad \gamma_\lambda = \sum_{k=0}^d \gamma_k \lambda^k \qquad \mathrm{ for } \quad \alpha_k,\, \beta_k,\,\gamma_k \in \C\,.
\end{equation*}
The reality condition \eqref{eq:potential-reality} on the coefficients read
\begin{equation}\label{eq:reality}
	\gamma_k =-\overline{\beta}_{d-k-1} \qquad \mathrm{and} 	\qquad \alpha_k = -\overline{\alpha}_{d-k-1}
\end{equation}
The latter splits into real and imaginary parts
\begin{equation*} 
	\RE [ \alpha_k + \alpha_{d-k-1}]  = 0 = 
	\IM [ \alpha_k - \alpha_{d-k-1}]  \,.
\end{equation*}
When degree $d$ is odd, then for $k=(d-1)/2$ also $d-k-1= (d-1)/2$ and hence
\begin{equation*}
	\RE \alpha_{(d-1)/2} = 0 \,.
\end{equation*}
Note that there is no freedom in the choice of $\gamma_\lambda$, as all its coefficients are determined by the coefficients of $\beta_\lambda$. There are $2d+1$ real degrees of freedom for $\beta_\lambda$, and $d$ real degrees of freedom for $\alpha_\lambda$ when $d$ is even, and $d-1$ when $d$ id odd. Hence the space of potentials of degree $d$ is an open subset of a $3d+1$ dimensional real vectorspace when $d$ is even, and an open subset of a $3d$ dimensional real vectorspace when $d$ is odd.
 
%%%%%%%%%%%%%%%%%%%%%%%%

In terms of the four matrix entries [ij] in the order [11], [12], [21] respectively [22], equation \eqref{eq:Sklyanin-potentials} reads
\begin{equation}\label{eq:Sklyanin-potential-symmetry}
\begin{split}
(\lambda - \lambda^{-1})\left( \gamma_\lambda + \overline{\gamma_{\bar{\lambda}}} \right) +4( A - B\lambda ) \left(   \alpha _\lambda + \overline{\alpha_{\bar{\lambda}}}  \right)  &\equiv 0 \,,
\\
-(\lambda - \lambda^{-1})\left( \alpha_\lambda - \overline{\alpha_{\bar{\lambda}}} \right) + 4( A -  B\lambda )\beta_\lambda + 4(A-B\lambda^{-1})  \overline{\gamma_{\bar{\lambda}}}   &\equiv 0  \,,
\\
(\lambda - \lambda^{-1})\left( \alpha_\lambda - \overline{\alpha_{\bar{\lambda}}} \right) + 4( A -  B\lambda )\overline{\beta_{\bar{\lambda}}} + 4(A-B\lambda^{-1})  \gamma_\lambda   &\equiv 0  \,,
\\
(\lambda - \lambda^{-1})\left( \beta_\lambda + \overline{\beta_{\bar{\lambda}}} \right) - 4( A -  B\lambda^{-1} ) \left(   \alpha _\lambda + \overline{\alpha_{\bar{\lambda}}}  \right)  &\equiv 0  \,.
\end{split}
\end{equation}
Inspection of the two diagonal conditions in \eqref{eq:Sklyanin-potential-symmetry} proves
\begin{proposition} \label{th:1st}
For the entries of a \rm{K}-symmetric potential we have 
\begin{enumerate}
\item $\displaystyle{\alpha_\lambda + \overline{\alpha_{\bar{\lambda}}}  \equiv 0  \Longleftrightarrow
\beta_\lambda + \overline{\beta_{\bar{\lambda}}} \equiv 0} \Longleftrightarrow \gamma_\lambda + \overline{\gamma_{\bar{\lambda}}}  \equiv 0$.
\item $\displaystyle{\alpha_\lambda \equiv 0  \Longrightarrow
\beta_\lambda + \overline{\beta_{\bar{\lambda}}} \equiv 0}$.
\end{enumerate}
\end{proposition}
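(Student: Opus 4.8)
The plan is to extract everything from the two diagonal components of \eqref{eq:Sklyanin-potential-symmetry} --- the $[11]$- and $[22]$-equations --- working inside the integral domain $\C[\lambda,\lambda^{-1}]$ of Laurent polynomials. First I would abbreviate $a_\lambda = \alpha_\lambda + \overline{\alpha_{\bar\lambda}}$, $b_\lambda = \beta_\lambda + \overline{\beta_{\bar\lambda}}$ and $c_\lambda = \gamma_\lambda + \overline{\gamma_{\bar\lambda}}$, so that the $[11]$- and $[22]$-equations read
\[
	(\lambda - \lambda^{-1})\, c_\lambda = -\,4(A - B\lambda)\, a_\lambda\,, \qquad (\lambda - \lambda^{-1})\, b_\lambda = 4(A - B\lambda^{-1})\, a_\lambda\,.
\]
The three Laurent polynomials $\lambda - \lambda^{-1}$, $A - B\lambda$ and $A - B\lambda^{-1}$ are all nonzero elements of $\C[\lambda,\lambda^{-1}]$ (for the latter two one uses that $A$ and $B$ are not both zero), hence none of them is a zero divisor.

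For part (i), the first displayed identity gives $a_\lambda \equiv 0 \Rightarrow (\lambda-\lambda^{-1})c_\lambda \equiv 0 \Rightarrow c_\lambda \equiv 0$, and conversely $c_\lambda \equiv 0 \Rightarrow (A-B\lambda)\,a_\lambda \equiv 0 \Rightarrow a_\lambda \equiv 0$; thus $a_\lambda \equiv 0 \Leftrightarrow c_\lambda \equiv 0$. The second identity yields $a_\lambda \equiv 0 \Leftrightarrow b_\lambda \equiv 0$ by the identical cancellation argument. Concatenating the two equivalences proves (i).

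For part (ii), if $\alpha_\lambda \equiv 0$ then also $\overline{\alpha_{\bar\lambda}} \equiv 0$, hence $a_\lambda \equiv 0$, and part (i) then gives $b_\lambda \equiv 0$, which is precisely $\beta_\lambda + \overline{\beta_{\bar\lambda}} \equiv 0$. I would also note in passing why this implication is not reversible: vanishing of $b_\lambda$ only returns $a_\lambda \equiv 0$, i.e. $\alpha_\lambda = -\overline{\alpha_{\bar\lambda}}$, rather than $\alpha_\lambda \equiv 0$, which is why (ii) is stated one-sidedly.

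I do not expect a genuine obstacle here: the content is purely algebraic and comes down to the cancellation law in $\C[\lambda,\lambda^{-1}]$ applied to the diagonal equations, exactly as advertised just before the statement. The only point that needs a word of care is the standing hypothesis that $A$ and $B$ do not both vanish, which is what keeps $A - B\lambda^{\pm 1}$ from being the zero element; in the degenerate case $A=B=0$ the diagonal equations instead force $b_\lambda \equiv c_\lambda \equiv 0$ outright, irrespective of $a_\lambda$, and that situation lies outside the scope of the statement.
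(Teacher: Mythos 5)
Your argument is correct and is exactly the paper's intended proof: the paper disposes of this proposition with the single remark that it follows by ``inspection of the two diagonal conditions in \eqref{eq:Sklyanin-potential-symmetry}'', and your write-up simply makes that inspection explicit via cancellation of the nonzero factors $\lambda-\lambda^{-1}$ and $A-B\lambda^{\pm1}$ in the integral domain $\C[\lambda,\lambda^{-1}]$. Your closing caveat about the degenerate case $A=B=0$ (where $K$ is a multiple of the off-diagonal constant matrix and the diagonal equations force $\beta_\lambda+\overline{\beta_{\bar\lambda}}\equiv\gamma_\lambda+\overline{\gamma_{\bar\lambda}}\equiv 0$ without constraining $\alpha_\lambda+\overline{\alpha_{\bar\lambda}}$) is a fair point the paper leaves implicit, but it does not change the substance of the proof.
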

In each of the four equations \eqref{eq:Sklyanin-potential-symmetry} we have powers of $\lambda$ ranging from $\lambda^{-1}$ to $\lambda^{d+1}$. Comparing coefficients gives for $k=-1,\,\ldots,\,d+1$ the system
\begin{equation}\label{eq:Sklyanin-polynomial}
\begin{split}
-\left( \overline{\gamma}_{k+1} + \gamma_{k+1} \right) + 4 A \left(\overline{\alpha}_k + \alpha _k \right) +  \overline{\gamma}_{k-1} + \gamma_{k-1} - 4 B \left( \overline{\alpha}_{k-1} + \alpha_{k-1} \right)   &= 0 
\\
\alpha_{k+1} - \overline{\alpha}_{k+1} - 4 B \overline{\gamma}_{k+1}   + 4 A (\overline{\gamma}_k  +  \beta_k ) +  \overline{\alpha}_{k-1}  - \alpha_{k-1} - 4 B \beta_{k-1}  &= 0
\\
\overline{\alpha}_{k+1} - \alpha_{k+1} - 4 B \gamma_{k+1}  + 
4 A (\overline{\beta}_k  +  \gamma_k )  +  \alpha_{k-1}  - \overline{\alpha}_{k-1} - 4 B \overline{\beta}_{k-1}  &= 0
\\
4 B ( \overline{\alpha}_{k+1} + \alpha_{k+1} ) - \overline{\beta}_{k+1}  - \beta_{k+1}  - 4 A (\overline{\alpha}_k  + \alpha_k )  +    \beta_{k-1}  + \overline{\beta}_{k-1}  &= 0\,.
\end{split}
\end{equation}
Splitting into real part $\RE$ and imaginary part $\IM$ gives
\begin{equation} \label{eq:split_equations} \begin{split}
-  \RE\gamma_{k+1} + 4 A \RE \alpha _k  +  \RE\gamma_{k-1} - 4 B \RE \alpha_{k-1}  &= 0  \\
\mi\IM \alpha_{k+1} - 2 B \overline{\gamma}_{k+1} + 2 A (\overline{\gamma}_k  +  \beta_k ) - \mi\IM\alpha_{k-1} - 2 B \beta_{k-1} &= 0 \\
-\mi\IM \alpha_{k+1} - 2 B \gamma_{k+1}  + 
2 A (\overline{\beta}_k  +  \gamma_k )  + \mi\IM \alpha_{k-1} - 2 B \overline{\beta}_{k-1}  &= 0 \\
4 B \RE\alpha_{k+1} - \RE\beta_{k+1}  - 4 A \RE \alpha_k   +  \RE \beta_{k-1}  &= 0 \,.\end{split}
\end{equation}
%
%%%%%%%%%%%%%%%%%%%%%%%%%%%
%
\subsection{Real Parts.} Adding the first and last equations in \eqref{eq:split_equations} gives the recursion 
\begin{equation} \label{eq:alpha_recursion}
	4B\RE [\alpha_{k+1} - \alpha_{k-1} ]   -\RE [\beta_{d-k} - \beta_{d-k-2} ] + \RE [ \beta_{k-1} - \beta_{k+1} ] = 0\,.
\end{equation}
Plugging in $k=-1$ gives
\[
	4B \RE \alpha_0 = \RE \beta_0
\]
Plugging in $k=0$ gives
\[
	4B \RE \alpha_1 + \RE \beta_{d-2} + \RE [\beta_{-1} - \beta_1 ] = 0
\]
Hence $\RE\alpha_0,\,\RE\alpha_1$ are uniquely determined by $\RE\beta_0,\,\RE\beta_1$ and $\RE\beta_{d-2}$. Now the recursion \eqref{eq:alpha_recursion} uniquely determines the remaining $\RE\alpha_k$.  We summarize in the following
\begin{proposition} 
The real parts of the coefficients of $\beta_\lambda$ uniquely determine the real parts of the coefficients of $\alpha_\lambda$. If the coefficients of $\beta_\lambda$ are all imaginary, then also all the coefficients of $\alpha_\lambda$ are imaginary.
\end{proposition}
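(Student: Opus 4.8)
The plan is to read \eqref{eq:alpha_recursion} as a recursion \emph{for the real parts $\RE\alpha_k$}, with the $\RE\beta_j$ as prescribed data, and to track that every $\RE\alpha_k$ then comes out as a real-linear expression in the $\RE\beta_j$ with no constant term. The base of the recursion is already available from the computations above: the relations $4B\RE\alpha_0=\RE\beta_0$ and $4B\RE\alpha_1+\RE\beta_{d-2}+\RE[\beta_{-1}-\beta_1]=0$, together with $\RE\beta_{-1}=0$ from \eqref{eq:residue}, pin down $\RE\alpha_0$ and $\RE\alpha_1$ as real-linear combinations of $\RE\beta_0$, $\RE\beta_1$, $\RE\beta_{d-2}$.

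Assuming first $B\ne 0$, for each $k=1,\dots,d-2$ the equation \eqref{eq:alpha_recursion} solves for $\RE\alpha_{k+1}$ in terms of $\RE\alpha_{k-1}$ and four real coefficients of $\beta_\lambda$. Since \eqref{eq:alpha_recursion} links indices of equal parity, I would run it along the two parity classes starting from $\RE\alpha_0$ and $\RE\alpha_1$; a one-line induction on $k$ then expresses every $\RE\alpha_m$, $0\le m\le d-1$, as a real-linear function of the $\RE\beta_j$, still with vanishing constant term. Both assertions drop out immediately: the explicit dependence is the first, and setting all $\RE\beta_j=0$ forces all $\RE\alpha_m=0$, i.e. all coefficients of $\alpha_\lambda$ imaginary, which is the second. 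The leftover instances $k=d-1,d,d+1$ of \eqref{eq:alpha_recursion} impose no constraint on $\alpha_\lambda$ — they are identities among the $\RE\beta_j$, automatically valid because $\xi_\lambda$ is K-symmetric — so there is nothing over-determined to worry about.

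The one genuine case distinction, and the only step needing real care, is that \eqref{eq:alpha_recursion} is invertible for $\alpha$ only when $B\ne 0$. For $B=0$ (so necessarily $A\ne 0$) I would not add the first and last equations of \eqref{eq:split_equations} but use the first one alone, which then reads $4A\,\RE\alpha_k=\RE\gamma_{k+1}-\RE\gamma_{k-1}$; the reality relation \eqref{eq:reality} turns the right-hand side into $\RE[\beta_{d-k}-\beta_{d-k-2}]$, so again each $\RE\alpha_k$ is a real-linear expression in the $\RE\beta_j$ and the same two conclusions follow. Beyond this split the whole thing is index bookkeeping — keeping straight the ranges $0\le k\le d-1$ for $\alpha_k$, $-1\le k\le d-1$ for $\beta_k$, $0\le k\le d$ for $\gamma_k$ near the endpoints — so I anticipate no real obstacle.
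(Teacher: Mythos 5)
Your argument is correct and is essentially the paper's own: the paper likewise reads \eqref{eq:alpha_recursion} as a two-step recursion for the $\RE\alpha_k$ seeded by its $k=-1$ and $k=0$ instances, and the second assertion is exactly the homogeneity of that linear determination, so nothing further is needed there. Your separate treatment of $B=0$ via the first line of \eqref{eq:split_equations} and $\RE\gamma_k=-\RE\beta_{d-k-1}$ is a genuine refinement over the paper, which tacitly divides by $B$ throughout; the one caveat is that your parenthetical ``so necessarily $A\neq 0$'' is not forced, and in the doubly degenerate case $A=B=0$ the statement actually fails (the K-symmetry then only forces the $\alpha_k$ to be \emph{real} with $\alpha_k=-\alpha_{d-k-1}$, leaving free real coefficients undetermined by $\beta_\lambda$), so that case must be excluded, as the paper does implicitly.
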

%
%%%%%%%%%%%%%%%%%%%%%%%%%%%
%
\subsection{Imaginary Parts.} 
Adding and subtracting the two middle equations in \eqref{eq:split_equations} and using \eqref{eq:reality} that $\gamma_k =-\overline{\beta}_{d-k-1}$ we get recursions
\begin{equation} \label{eq:beta_recursion} \begin{split}
	 A\,\RE [\beta_k - \beta_{d-k-1} ]  + B\,\RE [ \beta_{d-k-2}  - \beta_{k-1} ] &= 0\,,\\
	\IM[\alpha_{k+1} - \alpha_{k-1}]+2A\IM[\beta_k-\beta_{d-k-1}] + 2 B \IM [\beta_{d-k-2}-\beta_{k-1}] &=0\,.
\end{split}
\end{equation}
and combining gives 
\begin{equation} \label{eq:beta_recursion1}
	\mi\IM[\alpha_{k+1} - \alpha_{k-1}]+2A(\beta_k-\beta_{d-k-1}) + 2 B (\beta_{d-k-2}-\beta_{k-1}) =0\,.
\end{equation}
For $k=-1$ we obtain
\begin{equation} \label{eq:1stbetarecursion} \begin{split}
	A\,\RE \beta_{-1}  + B\,\RE \beta_{d-1} &= 0\,, \\
	\mi\IM \alpha_0 + 2A \beta_{-1}  + 2B \beta_{d-1} &= 0\,, 
\end{split}
\end{equation}
and since $\RE\beta_{-1} = 0$ by \eqref{eq:residue} it follows that 
\begin{equation} \label{eq:re-top}
	\RE \beta_{d-1} = 0\,,
\end{equation}
and the two imaginary numbers $\beta_{-1},\,\beta_{d-1}$ determine the imaginary part of $\alpha_0$.

For $k=0$ we obtain
\begin{equation*} %\label{eq: imb0} 
\begin{split}
	A\RE \beta_0 + B \RE \beta_{d-2} &= 0\,, \\
	\mi \IM \alpha_1 + 2A (\beta_0 - \beta_{d-1})  + 2B(\beta_{d-2} - \beta_{-1}) &= 0\,, 
\end{split}
\end{equation*}
Hence $\RE\beta_0$ determines $\RE\beta_{d-2}$, and the imaginary number $A\beta_0  + B \beta_{d-2}$ determines $\IM\alpha_1$. 

Next for $k=1$ we get 
\begin{equation*}% \label{eq: ima2} 
\begin{split}
	A\RE[\beta_1 - \beta_{d-2}] + B \RE [\beta_{d-3} - \beta_0] &= 0\,,\\
	\mi \IM [\alpha_2 - \alpha_0] + 2A (\beta_1 - \beta_{d-2})  + 2B(\beta_{d-3} - \beta_0) &= 0\,, 
\end{split}
\end{equation*}
and therefore $\RE\beta_1$ determines $\RE\beta_{d-3}$. Likewise, $\RE\beta_2$ determines $\RE\beta_{d-4}$. Thus $\RE\beta_k$ determines $\RE\beta_{d-k-2}$. When the degree is even $d=2N$, then the last degree of freedom is given by $\RE\beta_{N-2}$ which determines $\RE\beta_N$. 
Hence choices for $\RE\beta_0, \RE\beta_1,\,\ldots ,\, \RE\beta_{N-2}$ determine the remaining  $\RE\beta_{N-1},\,\ldots ,\, \RE_{d-2}$. Thus for even degree there are $(d-2)/2$ choices. Likewise, when the degree is odd, there are $(d-1)/2$ choices. 

When $d$ is odd, then for $k=(d+1)/2$ we have $d-k-1 = (d-3)/2$ and hence
\begin{equation*}\begin{split}
	\RE  [ \alpha_{(d+1)/2} + \alpha_{(d-3)/2}] &= 0\,,\\
	\IM [ \alpha_{(d+1)/2} - \alpha_{(d-3)/2}]  &= 0\,.
\end{split}
\end{equation*}
Now $(d+1)/2 -  (d-3)/2 = 2$, so these two indices are two apart, but the recursion \eqref{eq:beta_recursion} gives no additional condition, as it reads
\begin{equation*} %\label{eq:no-im-beta_condition}
	 \IM [\alpha_{(d+1)/2} - \alpha_{(d-3)/2} ] + 2A (\beta_{(d-1)/2} - \beta_{(d-1)/2}) + 2B (\beta_{(d-3)/2} - \beta_{(d-3)/2})  = 0 \,.
\end{equation*}
Therefore there is no additional constraint on the imaginary parts of the coefficients of $\beta_\lambda$ for the case $d$ odd. \\
If $d$ is even then the indices $k$ and $d-k-1$ are never two apart, so the recursion \eqref{eq:beta_recursion} in conjunction with the reality condition gives no additional constraint in this case. There is a further one-dimensional reduction on the coefficients of $\beta_\lambda$ when the degree is even due to the following
\begin{proposition} 
Let $\xi_\lambda$ be a \rm{K}-symmetric potential with non-zero diagonal. If the degree $d$ is even, then 
\begin{equation*}
	\sum_{k=-1}^{d-1} (-1)^{k+1}  \beta_k = 0\,.
\end{equation*}
\end{proposition}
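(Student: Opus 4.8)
The plan is to recognize the alternating sum as (minus) the value of the Laurent polynomial $\beta_\lambda$ at $\lambda=-1$, and then to obtain the vanishing of that value by specializing the K-symmetry equation \eqref{eq:Sklyanin-potentials} to $\lambda=-1$, where $K$ degenerates to a multiple of the identity. Since $\beta_\lambda\big|_{\lambda=-1}=\sum_{k=-1}^{d-1}\beta_k(-1)^{k}=-\sum_{k=-1}^{d-1}(-1)^{k+1}\beta_k$, the assertion is equivalent to the vanishing of the $[12]$--entry of $\xi_\lambda\big|_{\lambda=-1}$; in fact the argument below gives the stronger statement $\xi_\lambda\big|_{\lambda=-1}=0$.

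The first step is to observe that, because $d$ is even, $\xi_\lambda\big|_{\lambda=-1}$ is a traceless Hermitian matrix. This uses only the reality conditions \eqref{eq:reality}. Evaluating $\alpha_\lambda$ at $\lambda=-1$, substituting $\alpha_k=-\overline{\alpha}_{d-k-1}$ and using $(-1)^{d-k-1}=-(-1)^{k}$ (valid since $d$ is even), one gets $\alpha_\lambda\big|_{\lambda=-1}=\overline{\alpha_\lambda\big|_{\lambda=-1}}$, so this scalar is real; the same manipulation with $\gamma_k=-\overline{\beta}_{d-k-1}$ yields $\gamma_\lambda\big|_{\lambda=-1}=\overline{\beta_\lambda\big|_{\lambda=-1}}$. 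Writing $\xi_\lambda\big|_{\lambda=-1}$ out as a $2\times 2$ matrix, these two facts say precisely that its conjugate transpose equals itself.

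The second step is to evaluate \eqref{eq:Sklyanin-potentials} at $\lambda=-1$. Since $-1$ is real, the term $\overline{\xi_{\bar\lambda}}^{\,t}$ specializes there to the conjugate transpose of $\xi_\lambda\big|_{\lambda=-1}$, which, by the first step, equals $\xi_\lambda\big|_{\lambda=-1}$ itself; hence \eqref{eq:Sklyanin-potentials} becomes $K(-1)\,\xi_\lambda\big|_{\lambda=-1}+\xi_\lambda\big|_{\lambda=-1}\,K(-1)=0$. With $K(-1)=4(A+B)\,\mathbbm{1}$ this reads $8(A+B)\,\xi_\lambda\big|_{\lambda=-1}=0$, and since $K(-1)\neq 0$ we conclude $\xi_\lambda\big|_{\lambda=-1}=0$; in particular $\beta_\lambda\big|_{\lambda=-1}=0$, which is the proposition.

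I expect the only delicate point to be the parity-dependent sign bookkeeping in the first step, since that is exactly where evenness of $d$ is essential: for odd $d$ the identical computation makes $\xi_\lambda\big|_{\lambda=-1}$ \emph{skew}--Hermitian, so the scalar $K(-1)$ commutes with it and no relation is produced — in agreement with the odd-degree discussion above. I would also flag that the conclusion needs $K(-1)\neq 0$, i.e. $A\neq -B$. If one prefers to stay at the level of the coefficient recursions, the same identity drops out of \eqref{eq:beta_recursion1}: setting $P_k:=2A\beta_k-2B\beta_{k-1}$, one has $\sum_{k=-1}^{d}(-1)^{k}P_k=2(A+B)\,\beta_\lambda\big|_{\lambda=-1}$ on the one hand, while pairing the summation index $k$ with $d-k-1$ and again using $(-1)^{d-k-1}=-(-1)^{k}$ turns \eqref{eq:beta_recursion1} into $2\sum_{k=-1}^{d}(-1)^{k}P_k=-\mi\sum_{k=-1}^{d}(-1)^{k}\IM[\alpha_{k+1}-\alpha_{k-1}]$, and shifting the summation index shows the right-hand side is $0$.
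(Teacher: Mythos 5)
Your proof is correct, but it takes a genuinely different route from the paper. The paper works entirely at the level of the coefficient recursion \eqref{eq:beta_recursion1}: it writes out $\mi\IM\alpha_k$ for every $k$ as an alternating combination of the $\beta_j$'s and then invokes the symmetry $\IM\alpha_k=\IM\alpha_{d-k-1}$ (coming from \eqref{eq:reality}) to force the alternating sum to vanish. You instead evaluate at the single point $\lambda=-1$: the reality condition \eqref{eq:potential-reality} with $d$ even makes $\xi_{-1}$ Hermitian, while the K-symmetry \eqref{eq:Sklyanin-potentials} at $\lambda=-1$, where $K(-1)=4(A+B)\mathbbm{1}$ is a nonzero scalar, forces $\xi_{-1}+\overline{\xi_{-1}}^{\,t}=0$; together these give $\xi_{-1}=0$, of which the stated identity is the $[12]$--entry. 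This is cleaner and conceptually sharper: it yields the stronger conclusion that the whole potential vanishes at $\lambda=-1$ (consistent with the paper's own observation ``$\xi_\lambda\equiv 0$ at $\lambda=-1$'' for the degree-two off-diagonal example, and it exposes what appears to be a typo in the $[21]$--entry of the degree-two potential in Example \ref{ex:im}(ii)), and your parity discussion explains in one line why nothing happens for odd $d$. Your explicit caveat that the argument needs $A+B\neq 0$ is well taken and applies equally to the paper's proof: for $d=2$ the paper's own computation reduces to $(A+B)(\beta_{-1}-\beta_0+\beta_1)=0$, so the proposition as stated silently assumes $A\neq -B$. Your closing coefficient-level variant is also sound — multiplying \eqref{eq:beta_recursion1} by $(-1)^k$ and summing makes the $\IM\alpha$ contributions telescope away without even using $\IM\alpha_k=\IM\alpha_{d-k-1}$ — so it recovers the paper's conclusion by a slightly more economical bookkeeping.
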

\begin{proof}
When $d$ is even, the recursion \eqref{eq:beta_recursion1} gives 
\begin{equation*} \begin{split}
	\mi\IM\alpha_0 &= -2A\,\beta_{-1} - 2B\,\beta_{d-1} \\
	\mi\IM\alpha_1 &= - 2 A (\beta_0  -\beta_{d-1}) - 2 B(-\beta_{-1} + \beta_{d-2}) \\
		\vdots & \\
	\mi\IM\alpha_{d-2} &= -2A(\beta_{-1} + \beta_1 - \beta_2 +  \ldots -\beta_{d-2} ) - 2B( - \beta_0 + \beta_1 -  \ldots+ \beta_{d-3} + \beta_{d-1})  \\
	\mi\IM\alpha_{d-1} &= -2A(\beta_0 - \beta_1 + \beta_2 - \ldots - \beta_{d-1}) - 2B (-\beta_{-1} + \beta_0 - \beta_1 + \ldots + \beta_{d-2})\,.
\end{split}
\end{equation*}
The symmetry $\IM\alpha_k = \IM\alpha_{d-k-1}$ for all $k=0,\,\ldots,\,d-1$ gives the claim. 
\end{proof}
In conclusion, when $d$ is even, there are $d$ choices, and when $d$ is odd there are $d+1$ real choices for $\IM\beta_{-1},\,\ldots,\,\IM \beta_{d-1}$ that uniquely determine $\IM\alpha_0,\,\ldots,\,\IM\alpha_{d-1}$. This proves
\begin{proposition}\label{th:beta_count}
Let $\xi_\lambda$ be a \rm{K}-symmetric potential of degree $d$. 

\rm{(i)} If $d$ is even, there are $(d-2)/2$ real degrees of freedom for the real parts of the coefficients of $\beta_\lambda$. If $d$ is odd, there are $(d-1)/2$ real degrees of freedom for the real parts of the coefficients of $\beta_\lambda$. 

\rm{(ii)} The imaginary parts of the coefficients of $\beta_\lambda$ uniquely determine the imaginary parts of the coefficients of $\alpha_\lambda$, and there are $d+1$ many real choices when $d$ is odd, and $d$ many when $d$ is even.
\end{proposition}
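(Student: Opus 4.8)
The plan is to assemble Proposition~\ref{th:beta_count} as a bookkeeping summary of the two case analyses (``Real Parts'' and ``Imaginary Parts'') carried out just above it, so the proof is mostly a matter of collating counts already established rather than proving anything genuinely new. First I would recall the setup: a degree-$d$ K-symmetric potential is recorded by the coefficients $\beta_{-1},\dots,\beta_{d-1}$ of $\beta_\lambda$ together with $\alpha_0,\dots,\alpha_{d-1}$, since the reality condition \eqref{eq:reality} fixes $\gamma_\lambda$ entirely in terms of $\beta_\lambda$, and the K-symmetry equations \eqref{eq:Sklyanin-potential-symmetry} (equivalently the split recursions \eqref{eq:alpha_recursion}, \eqref{eq:beta_recursion}, \eqref{eq:beta_recursion1}) are the only remaining constraints.

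For part~(i), I would collect the constraints on the real parts $\RE\beta_k$. The residue normalization \eqref{eq:residue} forces $\RE\beta_{-1}=0$, and \eqref{eq:re-top} then forces $\RE\beta_{d-1}=0$; the first recursion in \eqref{eq:beta_recursion} pairs each $\RE\beta_k$ with $\RE\beta_{d-k-2}$, so $\RE\beta_0$ determines $\RE\beta_{d-2}$, $\RE\beta_1$ determines $\RE\beta_{d-3}$, and so on, exactly as spelled out in the ``Imaginary Parts'' subsection. Counting the free members of the chain: when $d=2N$ is even the independent choices are $\RE\beta_0,\dots,\RE\beta_{N-2}$, which is $(d-2)/2$ of them; when $d$ is odd the analogous count gives $(d-1)/2$. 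Here I would simply cite the displayed computations for $k=-1,0,1,2$ and the sentence ``Thus $\RE\beta_k$ determines $\RE\beta_{d-k-2}$'' that already appears in the text, and note that the recursion \eqref{eq:alpha_recursion} then fixes every $\RE\alpha_k$, so the real parts contribute no further freedom.

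For part~(ii), I would argue that once all the $\IM\beta_k$ are chosen, the recursion \eqref{eq:beta_recursion1} (the imaginary half, \eqref{eq:beta_recursion}) expresses each $\mi\IM\alpha_k$ as an explicit $\R$-linear combination of the $\beta_j$'s — the telescoped formulas for $\mi\IM\alpha_0,\dots,\mi\IM\alpha_{d-1}$ displayed in the proof of the preceding proposition make this concrete — so the $\IM\alpha_k$ carry no independent freedom. It remains to count the free $\IM\beta_k$. There are $d+1$ of them a priori ($k=-1,\dots,d-1$). When $d$ is odd, the text has shown that the reality symmetry $\IM\alpha_k=\IM\alpha_{d-k-1}$ combined with the recursion produces no extra relation (the $k=(d+1)/2$ computation is vacuous), so all $d+1$ survive. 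When $d$ is even, the preceding proposition supplies exactly one linear relation $\sum_{k=-1}^{d-1}(-1)^{k+1}\beta_k=0$ among them, cutting the count to $d$. Assembling parts~(i) and~(ii) then gives the stated totals.

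The only step requiring any care — and the one I would flag as the main obstacle — is making the even-degree bookkeeping airtight: one must check that the single relation from the preceding proposition is the \emph{only} consequence of the symmetry $\IM\alpha_k=\IM\alpha_{d-k-1}$ (i.e.\ that no other pair of indices produces an independent constraint), and simultaneously that this relation is genuinely independent of the real-part relations and of the residue normalization, so that subtracting one from $d+1$ is legitimate and there is no double-counting. Concretely I would verify that the recursion \eqref{eq:beta_recursion1} run with the boundary data $\IM\alpha_0$ from \eqref{eq:1stbetarecursion} produces the telescoping expressions shown, that imposing $\IM\alpha_k=\IM\alpha_{d-k-1}$ for every $k$ collapses to the single alternating-sum identity, and that the case distinction ``$d$ even vs.\ $d$ odd'' in whether $k$ and $d-k-1$ are ever two apart — already dispatched in the text — leaves the imaginary-part recursion \eqref{eq:beta_recursion} contributing nothing further. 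Everything else is a transcription of counts already in hand.
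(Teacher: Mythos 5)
Your proposal is correct and follows essentially the same route as the paper, which proves this proposition precisely by collating the counts from the preceding ``Real Parts'' and ``Imaginary Parts'' computations: $\RE\beta_{-1}=0$ from \eqref{eq:residue}, $\RE\beta_{d-1}=0$ from \eqref{eq:re-top}, the pairing of $\RE\beta_k$ with $\RE\beta_{d-k-2}$ via \eqref{eq:beta_recursion}, the determination of $\IM\alpha_k$ by \eqref{eq:beta_recursion1}, and the single alternating-sum relation in the even case. The independence check you flag as the main obstacle is likewise left implicit in the paper's treatment, so your reconstruction does not diverge from the source.
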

%
%%%%%%%%%%%%%%%%%%%%
%
Adding up parameter counts from Proposition \ref{th:beta_count}  shows that K-symmetry halves the dimension of the parameter space. More precisely, we have proven 
\begin{theorem} \label{th:imaginary}
The space of \rm{K}-symmetric potentials of degree $d$ is real $(3d-2)/2$--dimensional for even $d$, and $(3d+1)/2$--dimensional for odd $d$.
\end{theorem}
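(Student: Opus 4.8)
The statement is an accounting identity: Theorem~\ref{th:imaginary} follows by summing the real degrees of freedom tallied in Proposition~\ref{th:beta_count}. The plan is therefore to combine the two parts of that proposition and to compare with the unconstrained count established before Section~2. First I would recall that, prior to imposing K-symmetry, the space of potentials of degree $d$ is an open subset of a real vector space of dimension $3d+1$ when $d$ is even and $3d$ when $d$ is odd: the coefficients $\gamma_k$ are determined by the $\beta_k$, leaving $2d+1$ real parameters for $\beta_\lambda$, together with $d$ real parameters for $\alpha_\lambda$ when $d$ is even and $d-1$ when $d$ is odd (the extra constraint $\RE\alpha_{(d-1)/2}=0$ in the odd case). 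This is the baseline against which ``halves the dimension'' should be read.

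Next I would add the K-symmetric counts. By Proposition~\ref{th:beta_count}(i)--(ii), once the potential is K-symmetric all of $\alpha_\lambda$ is determined by $\beta_\lambda$, so the dimension equals the number of free real parameters for $\beta_\lambda$ alone. For even $d$ this is $(d-2)/2$ real parts plus $d$ imaginary parts, giving $(d-2)/2 + d = (3d-2)/2$. For odd $d$ this is $(d-1)/2$ real parts plus $d+1$ imaginary parts, giving $(d-1)/2 + (d+1) = (3d+1)/2$. These are exactly the two asserted dimensions, so the only remaining point is to confirm that the resulting set is genuinely a manifold of that dimension rather than a lower-dimensional subset --- i.e.\ that the linear relations identified in Section~2 are the \emph{only} ones and that the open condition \eqref{eq:residue} is preserved. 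Since every constraint in the derivation was obtained by comparing coefficients in the linear system \eqref{eq:Sklyanin-polynomial} and each step exhibited an explicit free parameter together with the quantity it determines, the parametrization is linear and surjective onto the claimed vector space, and \eqref{eq:residue} cuts out an open subset; thus the dimension count is exact.

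I would close by noting the ``halving'' interpretation explicitly: $(3d-2)/2$ is half of $3d-2$, one less than the generic half $3d/2$ (reflecting that the $y=0$ reality locus and the K-constraint interact by one dimension in the even case via the last proposition's relation $\sum_{k}(-1)^{k+1}\beta_k=0$), while $(3d+1)/2$ is exactly half of $3d+1$ --- matching, in the odd case, half the unconstrained dimension $3d$ up to the same one-dimensional shift absorbed into the odd-degree reality constraint. In both cases the K-symmetric space has essentially half the dimension of the full potential space, which is the content of the theorem.

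\textbf{Main obstacle.} The computational content is already discharged in the lemmas and propositions of Section~2; the only thing to be careful about is bookkeeping --- making sure no degree of freedom is double-counted between the ``real parts'' and ``imaginary parts'' subsections and that the boundary cases $k=-1,0$ and the middle index in odd degree are handled consistently with \eqref{eq:residue} and \eqref{eq:re-top}. So the main (minor) obstacle is purely one of careful summation and reconciling the even/odd parity cases, not of new mathematics.
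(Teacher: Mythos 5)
Your proposal is correct and follows exactly the paper's own route: the theorem is obtained by adding the parameter counts of Proposition~\ref{th:beta_count}, namely $(d-2)/2+d=(3d-2)/2$ for even $d$ and $(d-1)/2+(d+1)=(3d+1)/2$ for odd $d$. Your extra remarks on the exactness of the parametrization and the openness of condition \eqref{eq:residue} are sound bookkeeping that the paper leaves implicit.
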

\begin{example} \label{ex:im} (i) For $d=1$ we have $\RE \beta_0 = 0,\,\RE\alpha_0 = 0$ and $\IM \alpha_0 = -2 (A\beta_{-1} + B\beta_0)$. Hence degree one potentials all have purely imaginary entries and have the form
\begin{equation*}
	\xi_\lambda = \mi \begin{pmatrix} -2(A\,\beta_{-1} + B\,\beta_0) & \beta_{-1} \lambda^{-1} + \beta_0 \\ \beta_0 + \beta_{-1} \lambda & 2(A\,\beta_{-1} + B\,\beta_0)  \end{pmatrix}
\end{equation*}
with $\beta_{-1}, \,\beta_0 \in \R$ free, $\beta_{-1} > 0$. 

(ii) For $d=2$ we have $\RE\beta_1 = 0$ and $(A+B)\RE\beta_0 = 0$ so either $\RE\beta_0 = 0$ or $B=-A$. Further, $\RE\alpha_0 = -\RE\alpha_1 = \RE\beta_0/(4B)$ and $\mi\IM\alpha_0 = -2(A\beta_{-1} +B\beta_1)$ as well as $\mi\IM\alpha_1 = -2(A(\beta_0 - \beta_1) + B(\beta_0 - \beta_{-1}))$. Thus reality $\overline{\alpha_0} = -\alpha_1$ holds if and only if $$\beta_{-1} - \beta_0 + \beta_1 = 0\,,$$
and again all entries are purely imaginary and given by 
\begin{equation*} \begin{split}
	\alpha_\lambda &= -2(A\beta_{-1} + B\beta_1) (1+\lambda) \,, \\
	\beta_\lambda &= \beta_{-1}\lambda^{-1} +(\beta_{-1} + \beta_1) + \beta_1\lambda \,.
\end{split}
\end{equation*}
The general form of degree two \rm{K}-symmetric potentials is
\begin{equation*}
	\xi_\lambda = \mi \begin{pmatrix} -2 (A\beta_{-1} + B \beta_1)(1+\lambda) & \beta_{-1}( \lambda^{-1} + 1) + \beta_1 (1+ \lambda) \\ \beta_1(1+\lambda) + \beta_{-1}(1+ \lambda^2) &  
2(A \beta_{-1} + B\beta_1)(1+\lambda) \end{pmatrix}\,.
\end{equation*}
(iii) For $d=3$ we have $\RE\beta_2 = 0$ and $A\RE\beta_0 + B\RE\beta_1 = 0$. Hence $\RE\beta_1 = -(A/B) \RE\beta_0$. Further, $\RE\alpha_0 = -\RE\alpha_2 = (1/4B)\RE\beta_0$ and $\RE\alpha_1 = 0$. Hence $\RE\beta_0 \neq 0$ guarantees that not all coefficients are purely imaginary. Continuing, we get $\mi\IM\alpha_0 = \mi\IM\alpha_2 = -2(A\beta_{-1} + B\beta_2),\,\mi\IM\alpha_1 = 2(A\beta_2 + B\beta_{-1})$. Thus in the degree 3 case we see that it is possible to get K-symmetric potentials which do not have purely imaginary entries, and
\begin{equation*} \begin{split}
	\alpha_0 &= 1/(4B)\RE\beta_0  -2(A\beta_{-1} + B\beta_2) \,,\\
	\alpha_1 &= 2(A\beta_2 + B\beta_{-1}) \,,\\
	\alpha_2 &=   -1/(4B)\RE\beta_0 - 2(A\beta_{-1} + B \beta_2 )\,.
\end{split}
\end{equation*}
Here $\beta_{-1}, \beta_2 \in \mi\R$, so also $\alpha_1 \in \mi\R$. But $\beta_0,\,\beta_1 \in \C$ in general, subject only to $A\RE\beta_0 + B\RE\beta_1 = 0$, and thus also $\alpha_0,\,\alpha_2 \in \C$ in general. 
\end{example}
%  
%%%%%%%%%%%%%%%%%%%%%%%%%%%%
%
\section{Imaginary potentials}
By Proposition \ref{th:1st} \rm{(i)}, a K-symmetric potential satisfying $\alpha_\lambda  + \overline{\alpha_{\bar{\lambda}}} \equiv 0$ has purely imaginary coefficients so that
\begin{equation}\label{eq:Sklyanin-reality}
\alpha_\lambda  + \overline{\alpha_{\bar{\lambda}}} \equiv 
\beta_\lambda  + \overline{\beta_{\bar{\lambda}}} \equiv
\gamma_\lambda + \overline{\gamma_{\bar{\lambda}}} \equiv 0\,.
\end{equation}
Hence we can write such an \emph{imaginary potential} as 
\[
\xi_\lambda  = \mi \begin{pmatrix} \alpha_\lambda & \beta_\lambda \\ \gamma_\lambda & -\alpha_\lambda \end{pmatrix}  
\]
where $\alpha_\lambda,\,\beta_\lambda,\,\gamma_\lambda$ all have real coefficients. Now the recursion  \eqref{eq:beta_recursion1} reads
\begin{equation} \label{eq:beta_recursion2}
	\alpha_{k+1} - \alpha_{k-1} + 2A(\beta_k-\beta_{d-k-1}) + 2 B (\beta_{d-k-2}-\beta_{k-1}) =0\,.
\end{equation}
For $k=-1$ we obtain 
\begin{equation}\label{eq:alpha0} 
	\alpha_0 = -2A\,\beta_{-1} - 2B\,\beta_{d-1}\,.
\end{equation}
\begin{example} Since \rm{K}-symmetric potentials of degree one and two are imaginary and computed in example \ref{ex:im}, the first notable difference occurs for $d=3$: \\
{\rm{(i)}} For $d=3$ the recursion gives
\begin{equation*} \begin{split}
	\alpha_0 &= -2A\beta_{-1} - 2 B \beta_2 \\	
	\alpha_1 &= - 2A(\beta_0 - \beta_2) - 2B(-\beta_{-1}  + \beta_1)  \\
	\alpha_2 &= -2A\beta_{-1} - 2 B \beta_2 \,.
\end{split}
\end{equation*}
Hence reality $\alpha_0 = \alpha_2$ is satisfied without a further condition. \\
{\rm{(ii)}} For $d=4$ the recursion gives 
\begin{equation*} \begin{split}
	\alpha_0 &= -2A\beta_{-1} - 2 B \beta_3 \\	
	 \alpha_1 &= - 2A ( \beta_0  - \beta_3) -2B (- \beta_{-1}  + \beta_2) \\
	\alpha_2 &= -2 A (\beta _{-1}+\beta _1 - \beta _2) -2 B (-\beta_0 + \beta_1 + \beta_3)   \\
	\alpha_3 &= -2 A (\beta_0 - \beta_1 + \beta_2 - \beta_3 ) -2 B (-\beta_{-1} + \beta_0 - \beta_1 + \beta_2 ) 
\end{split}
\end{equation*}
The reality conditions $\alpha_0 = \alpha_3,\,\alpha_1 = \alpha_2$ hold for all $A,\,B$ if and only if
\begin{equation} \label{eq:beta_sum2}
	\beta_{-1} -\beta_0 + \beta_1 - \beta_2 + \beta_3 = 0\,.
\end{equation}
\end{example}	
%
%
%%%%%%%%%%%%%%%%%%%%%%%%%%%%
%
\subsection{Off-diagonal K-symmetric potentials.}
By Proposition \ref{th:1st}  \rm{(ii)}, an off-diagonal K-symmetric potential is imaginary, so of the form
\begin{equation} \label{eq:offpot}
	\xi_\lambda  = \mi\, \begin{pmatrix} 0 & \beta_\lambda \\ \gamma_\lambda & 0 \end{pmatrix} \,.
\end{equation}
where $\beta_\lambda,\,\gamma_\lambda$ have only real coefficients. Now the recursion  \eqref{eq:beta_recursion1} reads
\begin{equation} \label{eq:beta_recursion3}
	 A(\beta_k-\beta_{d-k-1}) + B (\beta_{d-k-2}-\beta_{k-1}) =0 
\end{equation}
for $k=-1,\,\ldots,\,d+1$. Amongst the $d+1$ free paramters for off-diagonal, and hence imaginary $\beta_\lambda$, this recursion reduces the freedom by a further $d/2$ when $d$ is even, and $(d-1)/2$ for $d$ odd.  The first equations for $k=-1,\,\ldots,\,2$ read
\begin{equation*} \begin{split}
	A\beta_{-1} + B \beta_{d-1} &= 0 \\
	A(\beta_0 - \beta_{d-1}) + B( \beta_{d-2} - \beta_{-1} ) &= 0 \\
	A(\beta_1 - \beta_{d-2}) + B( \beta_{d-3} - \beta_0 ) &= 0 \\
	A(\beta_2 - \beta_{d-3}) + B( \beta_{d-4} - \beta_1 ) &= 0 
\end{split}
\end{equation*}
\begin{example}
We compute some low degree examples:\\
1. For $d=1$ we get  $\beta_0 = -\beta_{-1} A/B$, so degree one off-diagonal \rm{K}-symmetric potentials look like 
\begin{equation} \label{eq:deg1potential}
	\xi_\lambda =  \mi \beta_{-1} \begin{pmatrix} 0 & \lambda^{-1} -A/B \\ -A/B + \lambda & 0 \end{pmatrix}
\end{equation}
which drops rank at $\lambda = A/B,\,B/A$. \\
2. For $d=2$ we get $A\beta_{-1} + B \beta_1 = 0$ and $A(\beta_0 - \beta_1) + B( \beta_0 - \beta_{-1} ) = 0$, so degree two potentials are of the form
\begin{equation} \label{eq:deg2potential} \begin{split}
	\xi_\lambda &=  \mi \beta_{-1} \begin{pmatrix} 0 & \lambda^{-1} + (1-A/B)  - A/B \lambda  \\ -A/B + (1-A/B) \lambda +\lambda^2 & 0 \end{pmatrix} \\ 
		&=  \mi \beta_{-1} (\lambda + 1) \begin{pmatrix} 0 & \lambda^{-1}  - A/B \\\ -A/B +  \lambda & 0 \end{pmatrix} \end{split}
\end{equation}
Again $\det \xi_\lambda$ has simple roots at $\lambda = A/B,\,B/A$. In addition $\xi_\lambda \equiv 0$ at $\lambda = -1$. \\
3. For $d=3$ we can write $\beta_1,\,\beta_2$  in terms of $\beta_{-1},\,\beta_0$, in fact 
\[
	\beta_1 = (1-\tfrac{A^2}{B^2})  \beta_{-1} - \tfrac{A}{B} \beta_0\,\,,\qquad \beta_2 = - \tfrac{A}{B} \beta_{-1} \,.
\]
Inserting these expressions into $\beta_\lambda,\, \gamma_\lambda$, we observe that they have a common factor of degree 2, more specifically
\begin{equation*}
	\beta_\lambda / (\lambda^{-1} - A/B) = \gamma_\lambda /(\lambda - A/B) = \beta_{-1} \left(\lambda^2 + \tfrac{A}{B}\lambda + 1\right)+\lambda  \beta_0 \,.
\end{equation*}
Hence degree 3 off-diagonal \rm{K}-symmetric potentials are all of the form
\begin{equation} \label{eq:deg3potential} 
	\xi_\lambda =    \mi (\beta_{-1} \left(\lambda^2 + \tfrac{A}{B}\lambda + 1\right)+\lambda  \beta_0) \begin{pmatrix} 0 & \lambda^{-1}  - A/B \\\ -A/B +  \lambda & 0 \end{pmatrix} \,.
\end{equation}
This is the general behaviour of off-diagonal finite gap K-symmetric potentials, and the content of the next theorem.
\end{example}
\begin{theorem}
Let $\xi_\lambda$ be an off-diagonal \rm{K}-symmetric potential of degree $d$. Then there exists a unique real polynomial $\lambda \mapsto p(\lambda)$ of degree $d-1$ such that
\[
	\xi_\lambda = \mi\,p(\lambda) \begin{pmatrix} 0 & \lambda^{-1}  - A/B \\\ -A/B +  \lambda & 0 \end{pmatrix} \,.
\]
\end{theorem}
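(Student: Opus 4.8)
The plan is to show that the two off-diagonal entries $\beta_\lambda$ and $\gamma_\lambda$ of $\xi_\lambda$ both share the common factor $(\lambda^{-1}-A/B)$ and $(\lambda-A/B)$ respectively, and that the quotient is one and the same polynomial $p(\lambda)$ of degree $d-1$ with real coefficients. First I would invoke the reality relation $\gamma_k=-\overline{\beta}_{d-k-1}$ from \eqref{eq:reality}, which for an off-diagonal (hence imaginary) potential with real $\beta_k,\gamma_k$ reduces to $\gamma_k=-\beta_{d-k-1}$; thus $\gamma_\lambda=-\lambda^d\,\beta_{1/\lambda}$, so it suffices to control $\beta_\lambda$ and the statement about $\gamma_\lambda$ follows automatically. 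The content is therefore: the recursion \eqref{eq:beta_recursion3} forces $\beta_\lambda$ to be divisible by $(\lambda^{-1}-A/B)=(1-(A/B)\lambda)\lambda^{-1}$, equivalently $B-A\lambda \mid B\beta_\lambda$ as a polynomial in $\lambda$ after clearing the $\lambda^{-1}$, with quotient a real polynomial of degree $d-1$.

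The cleanest route is to evaluate the K-symmetry condition \eqref{eq:Sklyanin-potentials} directly at the special value $\lambda = B/A$ (note $K(\lambda)^{-1}=\mathrm{adj}\,K(\lambda)/\det K(\lambda)$, and $\det K(\lambda)=16(A-B\lambda)(A-B\lambda^{-1})-(\lambda-\lambda^{-1})^2$, which vanishes precisely when... ). Actually the more robust approach is to package the recursion: define the generating identity obtained by multiplying \eqref{eq:beta_recursion3} by $\lambda^{k}$ and summing over $k$. Writing $b(\lambda)=\sum_{k=-1}^{d-1}\beta_k\lambda^{k}$, the four shift patterns $\beta_k,\beta_{k-1},\beta_{d-k-1},\beta_{d-k-2}$ turn, after summation, into a relation of the form $(A-B\lambda)\,b(\lambda) = (A\lambda^{?}-B\lambda^{?})\,b(1/\lambda)\cdot\lambda^{d-1}$ up to boundary terms; since $\lambda^{d-1}b(1/\lambda)$ is again $b$ read backwards (by the reality relation this is $-\gamma_\lambda/\lambda$ up to sign), the relation collapses to $(A-B\lambda)b(\lambda)=\pm(A-B\lambda^{-1})\cdot(\text{reversed }b)$, i.e. $\beta_\lambda(B-A\lambda^{-1})^{-1}$ is palindromic-compatible. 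Unwinding this shows $(1-(A/B)\lambda^{-1})$ divides $\beta_\lambda$; the boundary terms $k=-1,0$ and $k=d,d+1$ of \eqref{eq:beta_recursion3} are exactly what is needed to kill the potential remainder, using $\beta_{-1}\neq 0$ and the top-degree nonvanishing from \eqref{eq:residue}. Defining $p(\lambda)=\beta_\lambda/(\lambda^{-1}-A/B)$ then gives a real polynomial; a degree count ($\beta_\lambda$ ranges over $\lambda^{-1},\ldots,\lambda^{d-1}$, so $\lambda\beta_\lambda$ has degree $d$, and dividing by the linear factor $1-(A/B)\lambda$ leaves degree $d-1$) and comparison of the $\gamma$-side via $\gamma_k=-\beta_{d-k-1}$ shows $\gamma_\lambda=\mi\,p(\lambda)(\lambda-A/B)$ with the \emph{same} $p$.

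Uniqueness is immediate: if $\mi\,p(\lambda)(\lambda^{-1}-A/B)=\mi\,q(\lambda)(\lambda^{-1}-A/B)$ as Laurent polynomials then $p\equiv q$ since $\C[\lambda,\lambda^{-1}]$ is a domain. The reality of $p$ follows from reality of the $\beta_k$ together with the reality of $A,B$.

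The main obstacle I anticipate is the bookkeeping of the boundary terms of the recursion \eqref{eq:beta_recursion3} — the cases $k=-1,0$ and $k=d,d+1$ involve coefficients $\beta_{-2},\beta_{d}$ etc. that are set to zero by convention, and one must check that the divisibility survives exactly at these ends rather than only generically; this is where the structural facts $\beta_{-1}>0$ from \eqref{eq:residue} and $\RE\beta_{d-1}=0$, $\beta_{d-1}\neq 0$ get used. An alternative that sidesteps the generating-function algebra is a downward induction on $d$: the low-degree base cases $d=1,2,3$ are already verified in the preceding example, and for the inductive step one notes that the leading behaviour of the recursion forces $\beta_{d-1}=-(A/B)\beta_{-1}$ and more generally that $\xi_\lambda - \mi\,\beta_{-1}(\lambda^{-1}-A/B)(\lambda^{\,0}\cdots)$ — i.e. subtracting off the degree-one off-diagonal K-symmetric potential \eqref{eq:deg1potential} times a suitable monomial — is again an off-diagonal K-symmetric potential of strictly smaller degree, to which the inductive hypothesis applies. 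I would present the induction version as the primary proof, since it reuses \eqref{eq:beta_recursion3} in the already-displayed form and the worked example, and relegate the generating-function identity to a remark.
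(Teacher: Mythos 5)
Your overall plan --- show that $(\lambda^{-1}-A/B)$ divides $\beta_\lambda$ and $(\lambda-A/B)$ divides $\gamma_\lambda$ with a common real quotient $p$ --- is the content of the theorem, and the paper proves it by explicit long division: it defines $p_0=\beta_{-1}$, $p_k=\beta_{k-1}+\tfrac{A}{B}p_{k-1}$ and checks that this synthetic division terminates exactly, using the recursion \eqref{eq:beta_recursion3}. Your generating-function route is in fact the cleaner way to see this, but you have not carried it out, and it is simpler than you realise: there is no summation to perform, because multiplying \eqref{eq:beta_recursion3} by $\lambda^k$ and summing just reproduces the second line of \eqref{eq:Sklyanin-potential-symmetry} with $\alpha_\lambda\equiv 0$, namely the Laurent-polynomial identity $(A-B\lambda)\,\beta_\lambda=(A-B\lambda^{-1})\,\gamma_\lambda$. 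Multiplying by $\lambda$ gives the polynomial identity $(A-B\lambda)\,\lambda\beta_\lambda=(A\lambda-B)\,\gamma_\lambda$, and since $A\lambda-B$ and $A-B\lambda$ are coprime for $A^2\neq B^2$, the factor $\lambda-B/A$ divides $\lambda\beta_\lambda$, which is exactly the claimed divisibility; the common quotient and its reality then follow from the reversal relation between $\beta$ and $\gamma$. Note that relation is $\gamma_k=+\beta_{d-k-1}$ in the real-coefficient convention (check against \eqref{eq:deg1potential}), i.e.\ $\gamma_\lambda=\lambda^{d-1}\beta_{1/\lambda}$, not $-\lambda^{d}\beta_{1/\lambda}$ as you wrote; with your sign the two off-diagonal entries would not produce the same $p$.

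The version you say you would present as primary --- downward induction by subtracting $\mi\,\beta_{-1}\lambda^{j}$ times the degree-one potential --- has a genuine gap. K-symmetry \eqref{eq:Sklyanin-potentials} is linear and is preserved under such a subtraction, but the reality condition \eqref{eq:potential-reality} is not: it pairs the coefficient $\hat\xi_k$ with $\hat\xi_{d-k-1}$ and therefore depends on the degree $d$. To land on an off-diagonal K-symmetric \emph{potential} of degree $d-1$ you must simultaneously remove the $\lambda^{d}$ coefficient of $\gamma_\lambda$ and the $\lambda^{-1}$ coefficient of $\beta_\lambda$ (these are equal, both being $\beta_{-1}$), and a single monomial multiple $\lambda^{j}$ of \eqref{eq:deg1potential} cannot do both at once. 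The induction can probably be repaired (e.g.\ by subtracting a suitable two-term combination, or by inducting on the Laurent identity above rather than on the class of potentials), but as stated the inductive step does not go through. I would therefore lead with the identity $(A-B\lambda)\beta_\lambda=(A-B\lambda^{-1})\gamma_\lambda$ and the coprimality argument, which replaces the paper's long-division bookkeeping entirely.
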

\begin{proof}
Write $\xi_\lambda$ as in \eqref{eq:offpot} with entries $\beta_\lambda,\,\gamma_\lambda$ with $\deg \gamma_\lambda = d$. Long division shows that the unique polynomial $\sum_{k=0}^{d-1} p_k \lambda^k$ of degree $d-1$ with coefficients recursivley defined by
\begin{equation*}
	p_0 = \beta_{-1}\,, \qquad
	p_k = \beta_{k-1} +\frac{A}{B}\, p_{k-1}  \quad \mathrm{for} \quad k=1,\,\ldots ,\,d-1
\end{equation*}
divides both $\beta_\lambda,\,\gamma_\lambda$ with quotients $\lambda^{-1} - A/B$ respectively $\lambda - A/B$.  Since all coefficients $p_k$ are real, this concludes the proof.
\end{proof}
The spectral curve $\Sigma$ is the 2-point compactification of 
\[
	\Sigma^* = \left\{ (\nu,\, \lambda ) \in \C^2 \mid \nu^2 = - \det \xi_\lambda \right\}\,.
\]
When $\xi = p \,\tilde{\xi}$ for a polynomial $p$, then after renormalization $\xi$ and $\tilde{\xi}$ give the same spectral curve, since the roots of $p^2$ all have even order. Any spectral curve comes from an isospectral family of potentials that all share the same determinant. In each isospectral family there are off-diagonal potentials, and by Proposition \ref{th:1st} \rm{(ii)} they are automatically imaginary.  
\begin{corollary} \label{th:off-diagonalKsymmetry}
All spectral curves of off-diagonal {\rm{K}}-symmetric potentials have genus one.  Higher spectral genus examples must come from potentials with non-zero diagonal. 
\end{corollary}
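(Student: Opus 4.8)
The plan is to exploit the explicit factorization just established: by the preceding Theorem, every off-diagonal K-symmetric potential of degree $d$ has the form $\xi_\lambda = \mi\,p(\lambda)\,\tilde\xi_\lambda$ with $\tilde\xi_\lambda = \bigl(\begin{smallmatrix} 0 & \lambda^{-1}-A/B \\ -A/B+\lambda & 0\end{smallmatrix}\bigr)$ and $p$ a real polynomial of degree $d-1$. First I would compute $\det\xi_\lambda = p(\lambda)^2\,\det(\mi\tilde\xi_\lambda)$, so that
\[
	-\det\xi_\lambda = p(\lambda)^2\,(\lambda^{-1} - A/B)(\lambda - A/B) = \lambda^{-1} p(\lambda)^2 \bigl(1 - \tfrac{A}{B}\lambda\bigr)\bigl(\lambda - \tfrac{A}{B}\bigr)\,.
\]
Thus the branch data of $\Sigma^\ast$ is governed by $p(\lambda)^2$ times a fixed quadratic-in-$\lambda$ (times $\lambda^{-1}$).

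The second step is the reduction/renormalization point already flagged in the text: since $p(\lambda)^2$ contributes only roots of even order to $-\det\xi_\lambda$, it can be removed by the standard normalization $\xi\mapsto\tilde\xi$ without changing the (normalized) spectral curve, because a double root of $\nu^2 = -\det\xi_\lambda$ is a node that is resolved trivially and contributes nothing to the genus. Hence $\Sigma$ for $\xi_\lambda$ coincides with $\Sigma$ for $\mi\tilde\xi_\lambda$, whose affine part is $\nu^2 = \lambda^{-1}(1-\tfrac{A}{B}\lambda)(\lambda-\tfrac{A}{B})$; after clearing the $\lambda^{-1}$ (absorbing it into $\nu$) one gets a curve branched over exactly the four points $\lambda\in\{0,\infty,A/B,B/A\}$ — noting $A/B\neq B/A$ since otherwise $A=\pm B$ and $K(\pm1)$ degenerates, a case excluded in Lemma~\ref{lem:Ucomm1} — which after the 2-point compactification at $\lambda=0,\infty$ is a hyperelliptic curve with exactly two finite branch points, hence genus one. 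The final sentence of the corollary is then immediate by contraposition: a K-symmetric potential whose spectral curve has genus $\geq 2$ cannot be off-diagonal, so it has non-zero diagonal.

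The main obstacle I expect is purely bookkeeping at the branch-point count: one must be careful about the roles of $\lambda=0$ and $\lambda=\infty$ under the 2-point compactification, and about whether the fixed quadratic $(1-\tfrac{A}{B}\lambda)(\lambda-\tfrac{A}{B})$ could have a repeated root or share a root with $\lambda=0$. The former happens only when $A=\pm B$ (excluded); the latter would force $A/B=0$, i.e. $A=0$, in which case the potential \eqref{eq:deg1potential}-style factor degenerates and one should check separately that $\nu^2 = \lambda^{-1}\cdot\lambda\cdot(\text{nonzero const})$ gives a rational (genus zero) curve — so strictly the statement is ``genus one'' generically and genus zero in the degenerate sub-case $A=0$ (or $B=0$), which one may either absorb into the statement or dispatch in a remark. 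Apart from this edge-case discussion, every step is a direct consequence of the explicit factorization and the already-quoted fact that even-order roots of the determinant do not affect the normalized spectral curve.
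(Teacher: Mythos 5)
Your argument is correct and follows essentially the same route as the paper: invoke the factorization theorem to write $\xi_\lambda = \mi\,p(\lambda)\,\tilde\xi_\lambda$, observe that the even-order roots contributed by $p(\lambda)^2$ are removed by the renormalization of the spectral curve, and then read off genus one from the four branch points $0,\,\infty,\,A/B,\,B/A$ of the degree-one curve. Your edge-case discussion ($A=\pm B$, $A=0$, $B=0$) is more careful than the paper, which passes over these degenerations silently; only your attribution of the exclusion of $A=\pm B$ to Lemma~\ref{lem:Ucomm1} is slightly off, since that lemma excludes the values $\lambda=\pm 1$ rather than the parameter choices $A=\pm B$.
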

%
%%%%%%%%%%%%%%%%%%%%%%%%%%%%%%%%%%%
%
\section{Spectral Theory of K-matrices}
We compute the values of $\lambda$ for which a K-matrix fails to be invertible, and determine the corresponding kernels. We show that the eigenvectors of $K$ are $\lambda$-independent, and we compute the residues at the simple poles of $K^{-1}$. 
%
%%%%%%%%%%%%
%
\subsection{Kernels.} Computing
\[
	-\lambda^2 \det K (\lambda) = \lambda ^4 + 16 A B \lambda ^3 - 2 \lambda ^2 \left(8 A^2+8 B^2+1\right)+16 A B \lambda +1
\]
and $\lambda \mapsto \det K(\lambda)$ has four simple roots 
\begin{equation} \begin{split} \label{eq:roots}
	\varrho &= \frac{2A - \sqrt{4 A^2+1}}{2 B + \sqrt{4 B^2+1}} \quad \mbox{ and } \quad 
	r  = \frac{2A + \sqrt{4 A^2+1}}{2 B + \sqrt{4 B^2+1}}\,, \\
	\varrho^{-1} &=\frac{2A + \sqrt{4 A^2+1}}{2 B-\sqrt{4 B^2+1}}   \quad \mbox{ and } \quad r^{-1} = \frac{2A - \sqrt{4 A^2+1}}{2 B-\sqrt{4 B^2+1}} \,.
\end{split}
\end{equation}
The null-set of $\det K$, that is 
\begin{equation}\label{eq:nullset}
	\mathcal{N} = \{ \varrho,\,r,\,\varrho^{-1},\,r^{-1} \}\,,
\end{equation}
is invariant under inversion as well as under the map $(A,\,B) \mapsto (-A,\,-B)$. 
The matrices $K(\lambda),\,\lambda \in \mathcal{N}$ drop rank, and kernels compute to  
\begin{equation} \label{eq:kernels} \begin{split}
	&\ker K (\varrho) = \ker K (r) =\ker \begin{pmatrix} 1 & 2 B + \sqrt{1 + 4 B^2} \\ 0 & 0 \end{pmatrix} \,,\\
	&\ker K (\varrho^{-1}) = \ker K (r^{-1}) =\ker \begin{pmatrix} 1 & 2 B - \sqrt{1 + 4 B^2} \\ 0 & 0 \end{pmatrix} \,.
\end{split}
\end{equation}
Note that 
\begin{equation*}
	\ker K (\varrho) \perp \ker K (\varrho^{-1}) 
\end{equation*}
and that both kernels are independent of the value of $A$ in the $K$-matrix.
%
%%%%%%%%%%%%%%%%%%%%%%%%%
%
\subsection{Spectrum.} $K$ has eigenvalues $\mu_\pm = \tfrac{1}{2} ( \tr K \pm \sqrt{\tr^2 K - 4 \det K})$ and compute to
\begin{equation} \label{eq:mu_pm} \begin{split}
	\mu_- (\lambda) &= 4 A  - 2 B \left(\lambda + \lambda^{-1} \right)  - \sqrt{4 B^2+1} \left(\lambda - \lambda^{-1} \right) \\
\mu_+ (\lambda) &= 4 A  - 2 B \left(\lambda + \lambda^{-1} \right)  + \sqrt{4 B^2+1} \left(\lambda - \lambda^{-1} \right)
\end{split}
\end{equation}
Note that for all $\lambda \in \mathbb{C}^\ast$ we have
\begin{equation} \label{eq:mu-symmetry}
	\mu_-(\lambda^{-1}) = \mu_+(\lambda)\,.
\end{equation}
Now $\mu_-$ has simple roots at $\varrho,\,r$, while $\mu_+$ has simple roots at $\varrho^{-1},\,r^{-1}$ , with  $\varrho,\,r,\,\varrho^{-1},\,r^{-1} $ as in \eqref{eq:roots}. While the eigenvalues of a K-matrix depend on $\lambda$ as well as both constants $A$ and $B$, the eigenvectors do not depend on $\lambda$, nor the constant $A$.
\begin{proposition}
The eigenvectors of $K$ are given by
\begin{equation} \label{eq:eigenvectors}
	\mathbf{v} = \begin{pmatrix} - \sqrt{4 B^2+1}-2 B \\ 1 \end{pmatrix} \quad \mbox{ and } \quad \mathbf{v}^\perp = \begin{pmatrix} \sqrt{4 B^2+1}-2 B \\ 1 \end{pmatrix} \,.
\end{equation}
so for all $\lambda \in \mathbf{C}^\ast$ we have 
\begin{equation*}
	K(\lambda) \,\mathbf{v} = \mu_-(\lambda) \,\mathbf{v} \quad \mbox{ and } \quad K(\lambda) \,\mathbf{v}^\perp = \mu_+ (\lambda) \,\mathbf{v}^\perp \,.
\end{equation*}
\end{proposition}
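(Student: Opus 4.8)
The plan is to verify directly that the two explicit vectors $\mathbf{v}$ and $\mathbf{v}^\perp$ are eigenvectors of $K(\lambda)$ for every $\lambda$, and to identify the corresponding eigenvalues with $\mu_-$ and $\mu_+$ from \eqref{eq:mu_pm}. The key observation that makes this painless is that the $\lambda$-dependence of $K(\lambda)$ sits entirely in two pieces: the diagonal part, which contributes $4A\,\mathbbm{1}$ plus a term built from $\lambda$ and $\lambda^{-1}$, and the off-diagonal part $(\lambda - \lambda^{-1})\bigl(\begin{smallmatrix} 0 & 1 \\ 1 & 0\end{smallmatrix}\bigr)$. So I would write
\begin{equation*}
	K(\lambda) = 4A\,\mathbbm{1} - 2B(\lambda + \lambda^{-1})\,\mathbbm{1} - 2B(\lambda - \lambda^{-1}) \begin{pmatrix} -1 & 0 \\ 0 & 1 \end{pmatrix} + (\lambda - \lambda^{-1}) \begin{pmatrix} 0 & 1 \\ 1 & 0 \end{pmatrix}\,,
\end{equation*}
so that $K(\lambda) = \bigl(4A - 2B(\lambda+\lambda^{-1})\bigr)\mathbbm{1} + (\lambda - \lambda^{-1})\,M$, where $M = \bigl(\begin{smallmatrix} 2B & 1 \\ 1 & -2B \end{smallmatrix}\bigr)$ is a fixed, $\lambda$-independent symmetric matrix.

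Once $K(\lambda)$ is in this form, the problem reduces to finding the eigenvectors of the single matrix $M$, since the scalar term $\bigl(4A - 2B(\lambda+\lambda^{-1})\bigr)\mathbbm{1}$ acts on every vector and does not affect eigenvectors. The matrix $M$ has trace $0$ and determinant $-4B^2 - 1$, hence eigenvalues $\pm\sqrt{4B^2+1}$. Solving $M\mathbf{v} = -\sqrt{4B^2+1}\,\mathbf{v}$ gives the first component $-\sqrt{4B^2+1} - 2B$ against second component $1$, which is exactly $\mathbf{v}$ in \eqref{eq:eigenvectors}; similarly the $+\sqrt{4B^2+1}$ eigenvalue yields $\mathbf{v}^\perp$. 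Then
\begin{equation*}
	K(\lambda)\,\mathbf{v} = \Bigl(4A - 2B(\lambda+\lambda^{-1}) - \sqrt{4B^2+1}\,(\lambda - \lambda^{-1})\Bigr)\mathbf{v} = \mu_-(\lambda)\,\mathbf{v}\,,
\end{equation*}
matching the formula for $\mu_-$ in \eqref{eq:mu_pm}, and likewise $K(\lambda)\,\mathbf{v}^\perp = \mu_+(\lambda)\,\mathbf{v}^\perp$. This also re-derives \eqref{eq:mu-symmetry} as the observation that swapping $\lambda \leftrightarrow \lambda^{-1}$ flips the sign of $\lambda - \lambda^{-1}$ and hence exchanges $\mu_-$ and $\mu_+$, consistent with $M$ having opposite eigenvalues.

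There is no real obstacle here; the only thing to be careful about is the bookkeeping of signs in the decomposition of $K(\lambda)$ — in particular that the diagonal entries $4A - 4B\lambda$ and $4A - 4B\lambda^{-1}$ split correctly as the symmetric combination $4A - 2B(\lambda+\lambda^{-1})$ plus the antisymmetric remainder $\mp 2B(\lambda - \lambda^{-1})$ — and checking that the chosen normalization of $\mathbf{v}$, $\mathbf{v}^\perp$ has the eigenvalue assignment the right way around (that $\mathbf{v}$ goes with $\mu_-$, not $\mu_+$). Both are immediate from a one-line substitution. I would also remark, since it costs nothing, that $\mathbf{v}$ and $\mathbf{v}^\perp$ are genuinely orthogonal (their inner product is $(2B)^2 - (4B^2+1) + 1 = 0$), which is consistent with $K(\lambda)$ being symmetric by \eqref{eq:Kproperties}, and that evaluating at the roots $\varrho, r$ of $\mu_-$ recovers the kernel computation \eqref{eq:kernels}: at those $\lambda$ the eigenvalue $\mu_-$ vanishes, so $\mathbf{v} \in \ker K(\lambda)$, and indeed $\mathbf{v}$ is a scalar multiple of $\bigl(\begin{smallmatrix} -(2B+\sqrt{1+4B^2}) \\ 1 \end{smallmatrix}\bigr)$.
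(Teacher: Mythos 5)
Your strategy --- splitting $K(\lambda)$ into a scalar multiple of the identity plus $(\lambda-\lambda^{-1})$ times a fixed symmetric matrix and diagonalizing the latter --- is the right (and essentially the only) way to see this; the paper states the proposition without proof, and a direct verification of this kind is clearly what is intended. However, your decomposition contains a sign error that makes the key intermediate step false as written. The diagonal entries split as $4A-4B\lambda = 4A-2B(\lambda+\lambda^{-1})-2B(\lambda-\lambda^{-1})$ and $4A-4B\lambda^{-1} = 4A-2B(\lambda+\lambda^{-1})+2B(\lambda-\lambda^{-1})$, so the traceless diagonal piece is $-2B(\lambda-\lambda^{-1})\,\mathrm{diag}(1,-1)$, not $-2B(\lambda-\lambda^{-1})\,\mathrm{diag}(-1,1)$; your formula as stated reconstructs a matrix whose diagonal entries are those of $K(\lambda)$ interchanged. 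Consequently the fixed matrix should be
\begin{equation*}
	M=\begin{pmatrix}-2B & 1\\ 1 & 2B\end{pmatrix}\,, \qquad\text{not}\qquad \begin{pmatrix}2B & 1\\ 1 & -2B\end{pmatrix}\,.
\end{equation*}
With your $M$ the equation $M\mathbf{v}=-\sqrt{4B^2+1}\,\mathbf{v}$ does \emph{not} have the solution you state: the $-\sqrt{4B^2+1}$--eigenvector of your $M$ is $\bigl(\begin{smallmatrix}2B-\sqrt{4B^2+1}\\ 1\end{smallmatrix}\bigr)$, which is proportional neither to $\mathbf{v}$ nor to $\mathbf{v}^\perp$. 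With the corrected $M$ everything you claim goes through verbatim: $M$ still has trace $0$ and determinant $-4B^2-1$, its $\mp\sqrt{4B^2+1}$--eigenspaces are spanned exactly by $\mathbf{v}$ and $\mathbf{v}^\perp$ of \eqref{eq:eigenvectors}, and $K(\lambda)\mathbf{v}=\mu_-(\lambda)\mathbf{v}$, $K(\lambda)\mathbf{v}^\perp=\mu_+(\lambda)\mathbf{v}^\perp$ with $\mu_\pm$ as in \eqref{eq:mu_pm}. Your closing remarks on orthogonality, on recovering \eqref{eq:mu-symmetry} from the sign flip of $\lambda-\lambda^{-1}$, and on the consistency with the kernels \eqref{eq:kernels} are all correct; the fix required is only the one sign.
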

For the orthogonal matrix $V$ whose columns are $\mathbf{v},\mathbf{v}^\perp$, that is
\begin{equation*}
 	V = \begin{pmatrix} - \sqrt{4 B^2+1}-2 B &  \sqrt{4 B^2+1}-2 B \\ 1 & 1 \end{pmatrix} 
\end{equation*}
we have the diagonalization of $K$ by
\begin{equation} \label{eq:Kdiagonal}
	K = V\, \mathrm{diag}[\,\mu_-,\,\mu_+\,] \,V^{-1}\,.
\end{equation}
The span $\langle \,\,\rangle$ of the eigenvectors $\mathbf{v},\,\mathbf{v}^\perp$ of $K$ coincide with the kernels in \eqref{eq:kernels}, and we have 
\begin{equation*} \begin{split}
	\langle \mathbf{v} \rangle &= \ker K (\varrho) = \ker K (r) \\
	\langle \mathbf{v}^\perp \rangle &= \ker K (\varrho^{-1}) = \ker K (r^{-1})\,.
\end{split}
\end{equation*}
Suppose we have a K-symmetric potential $\xi_\lambda$. Evaluating  the symmetry \eqref{eq:Sklyanin-potentials} at $\lambda = \varrho$ and applying to the vector $\mathbf{v} \in \ker K(\varrho)$ gives 
\begin{equation*} 
	K(\varrho)\,\xi_\varrho\,\mathbf{v} + \overline{\xi_\varrho}^t K(\varrho)\,\mathbf{v} = K(\varrho)\,\xi_\varrho\,\mathbf{v}  = 0\,,
\end{equation*}
and hence $\xi_\varrho\,\mathbf{v} \in \ker K(\varrho) = \langle \mathbf{v} \rangle$. Similarly,  $\xi_r\,\mathbf{v} \in \ker K(r) = \langle \mathbf{v} \rangle$, $\xi_{\varrho^{-1}}\,\mathbf{v}^\perp \in \ker K(\varrho^{-1}) = \langle \mathbf{v}^\perp \rangle$ and $\xi_{r^{-1}}\,\mathbf{v}^\perp \in \ker K(r^{-1})  = \langle \mathbf{v}^\perp \rangle$. We summarize this in the following 
\begin{proposition} At each of the four points $\lambda \in \{ \varrho,\,r,\,\varrho^{-1},\,r^{-1} \}$ where $K(\lambda)$ drops rank, the kernel of $K(\lambda)$ is an eigenspace of $\xi_\lambda$. We may assume without loss of generality that 
\begin{equation*} 
	\xi_\varrho \mathbf{v} = \xi_r \mathbf{v} = -\nu\,\mathbf{v} \quad \mbox{ and } \quad
	\xi_{\varrho^{-1}} \mathbf{v}^\perp = \xi_{r^{-1}} \mathbf{v}^\perp = \nu\,\mathbf{v}\,.
\end{equation*} 
\end{proposition}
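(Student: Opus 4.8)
The plan is to exploit K-symmetry exactly at the four points of $\mathcal N$ together with the structure of $\det\xi_\lambda$. First I would record the elementary observation already implicit in the excerpt: for each $\lambda_0 \in \{\varrho,\,r\}$ the kernel $\langle\mathbf v\rangle$ is an $\xi_{\lambda_0}$-invariant line, so there is a scalar $c(\lambda_0)$ with $\xi_{\lambda_0}\mathbf v = c(\lambda_0)\,\mathbf v$; likewise $\xi_{\lambda_0}\mathbf v^\perp = c'(\lambda_0)\mathbf v^\perp$ for $\lambda_0\in\{\varrho^{-1},\,r^{-1}\}$. Since $\xi_\lambda\in\mathfrak{sl}_2(\C)$ its two eigenvalues are $\pm\nu(\lambda)$ where $\nu^2 = -\det\xi_\lambda$, so in each case the scalar is one of $\pm\nu(\lambda_0)$. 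Thus the only content of the proposition is the claim that we may make a coherent sign choice: the eigenvalue on $\mathbf v$ at $\varrho$ and at $r$ can be taken to be the \emph{same} branch $-\nu$, and similarly the eigenvalue on $\mathbf v^\perp$ at $\varrho^{-1}$ and $r^{-1}$ is the opposite branch $+\nu$ — where ``$\nu$'' here denotes evaluation of a fixed holomorphic branch on the spectral curve $\Sigma$.

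The key step is to tie the four sign choices together using the reality and twisted-reality structure. Recall from \eqref{eq:potential-reality} (equivalently \eqref{eq:Sklyanin-potentials}) that $\overline{\xi_{\bar\lambda}}^t = -K\,\xi_\lambda K^{-1}$. Evaluating at $\lambda=\varrho$ (which is real, as are $r,\varrho^{-1},r^{-1}$ by \eqref{eq:roots}) and pairing with $\mathbf v$: if $\xi_\varrho\mathbf v = c\,\mathbf v$ then applying $\overline{(\cdot)}^t$ and using $K(\varrho)\mathbf v = 0$ together with $K(\varrho^{-1}) = \mathrm{adj}\,K(\varrho)$ from \eqref{eq:Kproperties} forces a relation between the eigenvalue of $\xi_\varrho$ on $\mathbf v$ and the eigenvalue of $\xi_{\varrho^{-1}}$ on $\mathbf v^\perp$; concretely one gets $\xi_{\varrho^{-1}}\mathbf v^\perp = \bar c\,\mathbf v^\perp = c\,\mathbf v^\perp$ since $c$ is real (the eigenvalue of a real matrix on a real eigenvector). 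The same argument relates $r$ to $r^{-1}$. It remains to link $\varrho$ to $r$: here I would use that $\varrho$ and $r$ are the two roots of $\mu_-$ (equivalently the two points of $\Sigma$ lying over the zero locus of $\mu_-$), and that the holomorphic function $\lambda\mapsto \nu(\lambda)$ on $\Sigma^*$, restricted to the branch singled out by $\langle\mathbf v\rangle$, extends continuously across $\varrho$ and $r$; a branch choice of $\nu$ that is $-\nu$ at $\varrho$ is then $-\nu$ at $r$ as well once we fix the labelling of sheets consistently with the eigenline $\langle\mathbf v\rangle$. Writing $\nu := \nu(\varrho)$ and noting $\nu(\varrho)=\nu(r)$ need \emph{not} hold as numbers — rather the \emph{point} on $\Sigma$ is what matters — one packages the statement as: rename the branch of $\nu$ so that $\xi_\varrho\mathbf v = -\nu\mathbf v$; then automatically $\xi_r\mathbf v = -\nu\mathbf v$ and $\xi_{\varrho^{-1}}\mathbf v^\perp = \xi_{r^{-1}}\mathbf v^\perp = \nu\mathbf v^\perp$, which is the assertion (the ``without loss of generality'' absorbing the global sign of $\nu$).

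The main obstacle I anticipate is making precise what ``$\nu$'' means across the four points so that the single symbol is legitimate: a priori $-\det\xi_\lambda$ need not be a perfect square as a polynomial, so $\nu$ is genuinely a function on the double cover $\Sigma$, and one must check that the four points $\varrho,r,\varrho^{-1},r^{-1}$ are not branch points of $\Sigma$ (equivalently that $\det\xi_{\lambda_0}\neq 0$ there) before speaking of ``the'' eigenvalue branch on the $\mathbf v$-eigenline. If some $\det\xi_{\lambda_0}=0$ the eigenvalue is $0$ and both signs coincide, so that degenerate case is harmless; in the generic case one invokes the twisted-reality relation above to pin down the product $c(\varrho)c(\varrho^{-1})$ and the symmetry $\mu_-(\lambda^{-1})=\mu_+(\lambda)$ from \eqref{eq:mu-symmetry} to transport sheet labels. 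Once the bookkeeping of sheets is set up, the proof is the short computation sketched in the excerpt plus the reality relation, so I would present it as: (1) invariance of the eigenlines, already shown; (2) the eigenvalues are $\pm\nu$; (3) the real twisted-reality relation forces $\varrho\leftrightarrow\varrho^{-1}$ and $r\leftrightarrow r^{-1}$ to carry opposite signs; (4) a connectedness/branch argument on $\Sigma$ (or a direct comparison of the two roots of $\mu_-$) forces $\varrho$ and $r$ to carry the same sign; (5) absorb the overall ambiguity into the choice of $\nu$, giving the displayed normalization.
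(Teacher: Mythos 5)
Your steps (1)--(2) are exactly the paper's argument: the substantive part of the proposition is proved by evaluating \eqref{eq:Sklyanin-potentials} at each $\lambda_0\in\mathcal N$ and applying it to the kernel vector $\mathbf w$, so that $\overline{\xi_{\lambda_0}}^{\,t}K(\lambda_0)\mathbf w=0$ forces $K(\lambda_0)\,\xi_{\lambda_0}\mathbf w=0$, i.e.\ $\xi_{\lambda_0}\mathbf w\in\ker K(\lambda_0)$, and the eigenvalue of the trace-free matrix $\xi_{\lambda_0}$ is then a square root of $-\det\xi_{\lambda_0}$. That is all the paper establishes; the displayed normalization is treated there as a naming convention (a choice of point of $\Sigma$ over each $\lambda_0$), with no further argument offered.

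Your attempt to upgrade the ``without loss of generality'' contains one genuine error and one superfluous step. The error: the eigenvalue $c$ of $\xi_\varrho$ on the real vector $\mathbf v$ is \emph{not} real in general, so the step ``$\bar c=c$'' fails --- for the imaginary potentials of Section 3 the entries of $\xi_\lambda$ at real $\lambda$ lie in $\mi\R$ and the eigenvalues are purely imaginary (cf.\ Example \ref{ex:im}). The correct way to link $\varrho$ with $\varrho^{-1}$ is to combine the K-symmetry with the reality condition \eqref{eq:potential-reality} in the closed form $\overline{\xi_{\bar\lambda}}^{\,t}=-\lambda^{d-1}\xi_{\lambda^{-1}}$, which together with \eqref{eq:Sklyanin-potentials} gives $K(\lambda)\,\xi_\lambda=\lambda^{d-1}\xi_{\lambda^{-1}}K(\lambda)$; applying this at $\lambda=\varrho$ to $\mathbf v^\perp$ (using $K(\varrho)\mathbf v=0$, $K(\varrho)\mathbf v^\perp=\mu_+(\varrho)\mathbf v^\perp$ and that $\xi_\varrho$ is upper triangular in the basis $\mathbf v,\mathbf v^\perp$) yields $\xi_{\varrho^{-1}}\mathbf v^\perp=-\varrho^{1-d}c\,\mathbf v^\perp$, which is the opposite branch precisely because $\nu(\lambda^{-1})=\lambda^{1-d}\nu(\lambda)$. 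The superfluous step: nothing constrains the branch of $\nu$ at $r$ relative to the branch at $\varrho$ --- these are independent choices of one of the two points of $\Sigma$ in two distinct fibres, so your connectedness argument on $\Sigma$ addresses a non-problem and would in any case require a choice of path between the fibres to be meaningful.
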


%
%
%%%%%%%%%%%%%%%
%
\subsection{The Inverse and Residues.} Now $K^{-1} = V\, \mathrm{diag}[\,\mu_-^{-1},\,\mu_+^{-1}\,] \,V^{-1}$ has simple poles at the four points $\lambda \in \{ \varrho,\,r,\,\varrho^{-1},\,r^{-1} \}$ since $\mu_\pm$ have simple roots there. Since the matrix $V$ does not depend on $\lambda$, the residues of $K^{-1}$ at the four points $\lambda \in \mathcal{N}$ as in \eqref{eq:nullset} are given by 
\[
 \mathrm{res}[K^{-1},\lambda]	  = V\, \mathrm{diag}\bigl[ \,\mathrm{res}[\mu_-^{-1},\,\lambda],\,\mathrm{res}[\mu_+^{-1},\,\lambda \,\bigr] \,V^{-1}\,, \qquad \lambda \in \mathcal{N}.
\]
\begin{lemma} The kernels of the residues of $K^{-1}$ coincide with the kernels of $K$ as follows
\begin{equation*} \begin{split}
	\ker \mathrm{res}[K^{-1},\varrho] &= \ker K(\varrho^{-1}) = \ker \mathrm{res}[K^{-1},r] = \ker K(r^{-1}) \,,\\
	\ker \mathrm{res}[K^{-1},\varrho^{-1}] &= \ker K(\varrho) = \ker \mathrm{res}[K^{-1},r^{-1}] = \ker K(r)\,.
\end{split}
\end{equation*}
\end{lemma}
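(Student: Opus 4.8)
The plan is to push everything through the $\lambda$-independent diagonalization \eqref{eq:Kdiagonal}. Since $K^{-1} = V\,\mathrm{diag}[\mu_-^{-1},\mu_+^{-1}]\,V^{-1}$ with $V$ constant, taking the residue at a point $\lambda_0\in\mathcal N$ commutes with conjugation by $V$ and with passing to diagonal entries, so
\begin{equation*}
	\mathrm{res}[K^{-1},\lambda_0] = V\,\mathrm{diag}\bigl[\,\mathrm{res}[\mu_-^{-1},\lambda_0],\,\mathrm{res}[\mu_+^{-1},\lambda_0]\,\bigr]\,V^{-1}\,.
\end{equation*}
Everything thus reduces to the two scalar residues. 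Multiplying $\mu_-$ in \eqref{eq:mu_pm} by $\lambda$ exhibits it, up to the factor $\lambda^{-1}$, as a quadratic polynomial with non-zero leading coefficient, so $\mu_-$ has precisely the two zeros $\varrho,r$, both simple; likewise $\mu_+$ has precisely the simple zeros $\varrho^{-1},r^{-1}$.

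Next I would record which scalar residue survives at each of the four points. Because the four elements of $\mathcal N$ are distinct (this is the simplicity of the four roots of $\det K$ asserted after \eqref{eq:roots}), $\mu_+$ is holomorphic and non-zero at $\varrho$ and at $r$, so $\mathrm{res}[\mu_+^{-1},\varrho]=\mathrm{res}[\mu_+^{-1},r]=0$, whereas $\mathrm{res}[\mu_-^{-1},\varrho]$ and $\mathrm{res}[\mu_-^{-1},r]$ are non-zero, being $1/\mu_-'$ evaluated at a simple zero. Symmetrically, at $\varrho^{-1},r^{-1}$ it is $\mathrm{res}[\mu_-^{-1},\cdot]$ that vanishes and $\mathrm{res}[\mu_+^{-1},\cdot]$ that is non-zero.

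Combining the two, at $\lambda_0\in\{\varrho,r\}$ the residue matrix has the form $V\,\mathrm{diag}[c,0]\,V^{-1}$ with $c\neq0$, and since $V$ is invertible with columns $\mathbf v,\mathbf v^\perp$ one has $\ker(V\,\mathrm{diag}[c,0]\,V^{-1}) = V\langle e_2\rangle = \langle\mathbf v^\perp\rangle$, which was identified above with $\ker K(\varrho^{-1})=\ker K(r^{-1})$; this is the first displayed identity. At $\lambda_0\in\{\varrho^{-1},r^{-1}\}$ one obtains instead $V\,\mathrm{diag}[0,c']\,V^{-1}$ with $c'\neq0$, whose kernel is $V\langle e_1\rangle = \langle\mathbf v\rangle = \ker K(\varrho)=\ker K(r)$, the second identity; here $e_1,e_2$ denotes the standard basis of $\C^2$ and I use $\ker(VDV^{-1})=V\ker D$. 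The only step requiring any attention — the ``main obstacle'', such as it is — is the vanishing of the complementary scalar residue at each point, which is exactly the assertion that $\mathcal N$ consists of four distinct points and that $\mu_\pm$ acquire no extra zeros; everything else is bookkeeping.
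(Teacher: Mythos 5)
Your argument is correct. Both you and the paper start from the same formula $\mathrm{res}[K^{-1},\lambda_0]=V\,\mathrm{diag}[\mathrm{res}[\mu_-^{-1},\lambda_0],\mathrm{res}[\mu_+^{-1},\lambda_0]]\,V^{-1}$, but from there you diverge: the paper evaluates all four scalar residues explicitly (via the limits $\lim(\lambda-\lambda_0)\mu_\pm^{-1}$), writes out the four $2\times2$ residue matrices in closed form, row-reduces them, and compares with the row-reduced kernels in \eqref{eq:kernels}. You instead observe that at each $\lambda_0\in\mathcal N$ exactly one of the two scalar residues is non-zero — which follows from the simplicity and distinctness of the four roots asserted after \eqref{eq:roots} — so the residue matrix is $V\,\mathrm{diag}[c,0]\,V^{-1}$ or $V\,\mathrm{diag}[0,c']\,V^{-1}$ with a non-zero entry, and then use $\ker(VDV^{-1})=V\ker D$ together with the paper's prior identification $\langle\mathbf v\rangle=\ker K(\varrho)=\ker K(r)$, $\langle\mathbf v^\perp\rangle=\ker K(\varrho^{-1})=\ker K(r^{-1})$. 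This buys you a proof with no arithmetic at all: you never need the values of the residues, only their (non)vanishing, whereas the paper's computation additionally records the explicit residue matrices, which may be of independent use. The one point your route genuinely leans on — and which you correctly flag — is that $\{\varrho,r\}\cap\{\varrho^{-1},r^{-1}\}=\emptyset$, so that $\mu_+$ is non-vanishing at $\varrho,r$ and vice versa; this is exactly the simplicity of the four roots of $\det K=\mu_-\mu_+$ that the paper asserts, so your proof is complete as written.
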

\begin{proof}
To compute the residues, first multiply away the poles
\begin{equation*} \begin{split}
	(\lambda-\varrho) \,\mu_-^{-1} &= ( (\sqrt{4 A^2+1}+2 A)\lambda^{-1} - \sqrt{4 B^2+1}-2 B)^{-1} \\
	(\lambda-r) \,\mu_-^{-1} &= - ((\sqrt{4 A^2+1}-2 A)\lambda^{-1}+\sqrt{4 B^2+1} +2 B )^{-1} \\
	(\lambda - \varrho^{-1})\, \mu_+^{-1} &= ((-\sqrt{4 A^2+1}+2 A)\lambda^{-1}+\sqrt{4 B^2+1}  -2 B )^{-1} \\
	(\lambda- r^{-1}) \,\mu_+^{-1} &= ((\sqrt{4 A^2+1}+2 A)\lambda^{-1}+\sqrt{4 B^2+1}  -2 B )^{-1}
\end{split}
\end{equation*}
then evaluate to obtain the four residues
\begin{equation*} \begin{split}
	\mathrm{res}[\mu_-^{-1},\,\varrho] = \lim_{\lambda \to \varrho} (\lambda-\varrho)\,\mu_-^{-1} &= (\tfrac{1}{2} - \tfrac{A}{\sqrt{4A^2+1}} ) (2B - \sqrt{4 B^2+1} )\\
	\mathrm{res}[\mu_-^{-1},\,r] = \lim_{\lambda \to r}(\lambda-r)\,\mu_-^{-1} &=  (  \tfrac{1}{2} + \tfrac{A}{\sqrt{4 A^2+1}} ) (2B - \sqrt{4 B^2+1}) \\
	\mathrm{res}[\mu_+^{-1},\,\varrho^{-1}] = \lim_{\lambda \to \varrho^{-1}}(\lambda - \varrho^{-1})\,\mu_+^{-1} &=  ( \tfrac{1}{2} + \tfrac{A}{\sqrt{4 A^2+1}}) ( 2B + \sqrt{4 B^2+1}) \\
	\mathrm{res}[\mu_+^{-1},\,r^{-1}] = \lim_{\lambda \to r^{-1}}(\lambda- r^{-1})\,\mu_+^{-1} &= ( \tfrac{1}{2}-\tfrac{A}{\sqrt{4 A^2+1}}) ( 2B + \sqrt{4 B^2+1})
\end{split}
\end{equation*}
The residues of $K^{-1}$ at the four simple poles compute to
\begin{equation*} \begin{split}
	\mathrm{res}[K^{-1},\varrho] &= V \left( \begin{smallmatrix}\mathrm{res}[\mu_-^{-1},\varrho] & 0 \\ 0 & 0 \end{smallmatrix} \right) V^{-1} \\ &= 
\tfrac{\sqrt{4 A^2+1}-2A}{4 \sqrt{4 A^2+1} \sqrt{4 B^2+1}} \left( \begin{smallmatrix}
 -1 & \sqrt{4B^2+1}-2 B \\ \sqrt{4 B^2+1}-2 B & - (\sqrt{4 B^2+1}-2 B)^2\\
\end{smallmatrix} \right)  \sim \left( \begin{smallmatrix} 1 & 2 B - \sqrt{4 B^2 + 1} \\ 0 & 0  \end{smallmatrix} \right) \\
	\mathrm{res}[K^{-1},r] &= V \left( \begin{smallmatrix} \mathrm{res}[\mu_-^{-1},r] & 0 \\ 0 & 0 \end{smallmatrix} \right) V^{-1} \\ &= \tfrac{\sqrt{4 A^2+1}+2A}{4 \sqrt{4 A^2+1} \sqrt{4 B^2+1}} \left( \begin{smallmatrix}
 -1 & \sqrt{4B^2+1}-2 B \\ \sqrt{4 B^2+1}-2 B & - (\sqrt{4 B^2+1}-2 B)^2\\
\end{smallmatrix} \right) \sim \left( \begin{smallmatrix} 1 & 2 B - \sqrt{4 B^2 + 1} \\ 0 & 0  \end{smallmatrix} \right)  \\
	\mathrm{res}[K^{-1},\varrho^{-1}] &= V \left( \begin{smallmatrix} 0 & 0 \\ 0 & \mathrm{res}[\mu_+^{-1},\varrho^{-1}] \end{smallmatrix} \right) V^{-1} \\ &=  \tfrac{\sqrt{4 A^2+1}+2A}{4 \sqrt{4 A^2+1} \sqrt{4 B^2+1}} \left( \begin{smallmatrix}
 -1 & \sqrt{4B^2+1}+2 B \\ \sqrt{4 B^2+1}+2 B & - (\sqrt{4 B^2+1}+2 B)^2\\
\end{smallmatrix} \right) \sim \left( \begin{smallmatrix} 1 & 2 B + \sqrt{4 B^2 + 1} \\ 0 & 0  \end{smallmatrix} \right)  \\
	\mathrm{res}[K^{-1},r^{-1}] &= V \left( \begin{smallmatrix} 0 & 0 \\ 0 & \mathrm{res}[\mu_+^{-1},r^{-1}]\end{smallmatrix} \right) V^{-1} \\ &=  \tfrac{\sqrt{4 A^2+1}-2A}{4 \sqrt{4 A^2+1} \sqrt{4 B^2+1}} \left( \begin{smallmatrix}
 -1 & \sqrt{4B^2+1}+2 B \\ \sqrt{4 B^2+1}+2 B & - (\sqrt{4 B^2+1}+2 B)^2\\
\end{smallmatrix} \right) \sim \left( \begin{smallmatrix} 1 & 2 B + \sqrt{4 B^2 + 1} \\ 0 & 0  \end{smallmatrix} \right) 
\end{split}
\end{equation*}
Comparing row-reduced matrices in \eqref{eq:kernels} and the above row-reduced residues proves the claim.
\end{proof}
%
%%%%%%%%%%%%%%%%%%%%%%%%%%
\section{Products of K-matrices}
%%%%%%%%%%%%%%%%%%%%%%%%%%
%
We next look at products and ratios of K-matrices. Suppose we have two K-matrices 
\begin{equation} \label{eq:K0K1} 
	K_0  = \begin{pmatrix}
 4 A_0  -4 B_0 \lambda  & \lambda - \lambda^{-1} \\
 \lambda -  \lambda^{-1} & 4 A_0  - 4 B_0 \lambda^{-1}
\end{pmatrix},\, K_1  = \begin{pmatrix}
 4 A_1  -4 B_1 \lambda  & \lambda - \lambda^{-1} \\
 \lambda -  \lambda^{-1} & 4 A_1  - 4 B_1 \lambda^{-1}
\end{pmatrix}.
\end{equation}
Then 
\[
	[K_0,\, K_1] = 4  \left( \lambda - \lambda^{-1} \right)^2 (B_0 - B_1)
\begin{pmatrix}
 0 & -1 \\ 1 & 0 
\end{pmatrix}
\]
and thus
\begin{equation} \label{eq:Commutator0}
	[K_0,\, K_1] = 0 \quad \mbox{for all  } \lambda \in \mathbb{C}^\ast \Longleftrightarrow B_0 = B_1\,.
\end{equation}
\begin{proposition} \label{th:K1K0}
Suppose $K_0,\,K_1$ are two K-matrices with 
\begin{equation} \label{eq:sign-change}
	A_0 B_1 = - A_1 B_0\,.
\end{equation}
Then there exist unique rational functions $p,\,q$, and a degree one  potential $\eta_\lambda$ with
\begin{equation} \label{eq:K1K0}
	K_1^{-1} K_0 = p\,\mathbbm{1} +  q\,\eta_\lambda
\end{equation}
\end{proposition}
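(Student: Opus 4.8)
The plan is to compute $K_1^{-1}K_0$ by hand and simply read off $p$, $q$ and $\eta_\lambda$. Since every potential is trace‑free, the coefficient $p$ is forced: taking traces in \eqref{eq:K1K0} gives $p=\tfrac12\,\tr\!\big(K_1^{-1}K_0\big)$, a rational function of $\lambda$, which already disposes of the uniqueness of $p$. What remains is to show that the trace‑free part $M_\lambda:=K_1^{-1}K_0-p\,\mathbbm1$ is a rational multiple of a single degree‑one potential, uniquely once that potential is normalized as in \eqref{eq:residue}. I will assume $B_0\neq B_1$; otherwise $[K_0,K_1]=0$ by \eqref{eq:Commutator0} and $K_1^{-1}K_0$ is treated directly.

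First I would write $K_1^{-1}=(\det K_1)^{-1}\,\mathrm{adj}\,K_1$ and invoke the last identity in \eqref{eq:Kproperties}, namely $\mathrm{adj}\,K_1(\lambda)=K_1(\lambda^{-1})$, so that
\[
	K_1^{-1}K_0=\frac{1}{\det K_1}\,K_1(\lambda^{-1})\,K_0(\lambda)\,.
\]
Multiplying out the two explicit matrices from \eqref{eq:K0K1} and abbreviating $s=\lambda-\lambda^{-1}$, one finds that the two off‑diagonal entries of $K_1(\lambda^{-1})K_0(\lambda)$ already carry a factor of $s$, that the difference of its diagonal entries equals $16(A_0B_1-A_1B_0)\,s$, and that the only $(\lambda+\lambda^{-1})$‑term in its trace has coefficient $-16(A_0B_1+A_1B_0)$. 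This is exactly where hypothesis \eqref{eq:sign-change} enters: it makes $A_0B_1+A_1B_0=0$, so that $(\lambda+\lambda^{-1})$‑term drops out and one obtains $\det K_1\cdot M_\lambda=s\,N_\lambda$ with
\[
	N_\lambda=\begin{pmatrix} 16 A_0 B_1 & 4(A_1-A_0)+4(B_0-B_1)\lambda^{-1} \\ 4(A_1-A_0)+4(B_0-B_1)\lambda & -16 A_0 B_1 \end{pmatrix}\,,
\]
an $\mathrm{sl}_2(\C)$‑valued Laurent polynomial whose powers of $\lambda$ range from $\lambda^{-1}$ to $\lambda$, of degree exactly one since $B_0\neq B_1$.

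Next I would observe that $N_\lambda$ has real coefficient matrices and check it against \eqref{eq:potential-reality}: both required identities $\widehat{(\mi N)}_{-1}=-\,\overline{\widehat{(\mi N)}_{1}}^{\,t}$ and $\widehat{(\mi N)}_{0}=-\,\overline{\widehat{(\mi N)}_{0}}^{\,t}$ reduce to the symmetry of the matrices above, so $\mi N_\lambda$ satisfies the reality condition of a degree‑one potential. Hence $\eta_\lambda:=c\,N_\lambda$ is a genuine degree‑one potential in the normalization \eqref{eq:residue} for a suitable $c\in\mi\R^\ast$ (namely $c=\mi$ or $c=-\mi$, up to a positive real, according to the sign of $B_0-B_1$), and then $q:=s/(c\,\det K_1)$ is rational in $\lambda$ and yields $K_1^{-1}K_0=p\,\mathbbm1+q\,\eta_\lambda$. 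For uniqueness, with $p$ fixed the product $q\,\eta_\lambda=M_\lambda$ is determined; since $N_\lambda$ has no nonconstant Laurent‑polynomial factor — its diagonal entry $16A_0B_1$ is a nonzero constant when $A_0B_1\neq 0$, and the off‑diagonal case is checked directly — any degree‑one potential proportional to $M_\lambda$ is a scalar multiple of $\eta_\lambda$, and \eqref{eq:residue} pins that scalar, whence $q$ as well.

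The work here is mostly bookkeeping: carrying out the $2\times 2$ product cleanly enough to see the common factor $s$ and to track the trace, and correctly fixing the normalizing constant $c$. The one genuinely conceptual point — the place I expect to have to think rather than compute — is recognizing that \eqref{eq:sign-change} is precisely the condition making the trace‑free part of $K_1^{-1}K_0$ a single rational multiple of a Laurent polynomial: without it the diagonal of $M_\lambda$ picks up an extra $(\lambda+\lambda^{-1})$‑term that cannot be absorbed into a degree‑one potential, and the decomposition \eqref{eq:K1K0} fails.
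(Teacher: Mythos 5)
Your proof is correct and follows essentially the same route as the paper: both compute $\mathrm{adj}\,K_1\cdot K_0$ via $\mathrm{adj}\,K_1(\lambda)=K_1(\lambda^{-1})$, observe that hypothesis \eqref{eq:sign-change} is exactly what makes the non-scalar part trace-free so that the factor $\lambda-\lambda^{-1}$ can be extracted, and then read off $p$, $q$ and $\eta_\lambda$ (your $N_\lambda$ is $4/\mi$ times the paper's $\eta_\lambda$). Your additional checks — fixing $p$ by taking traces, verifying the reality condition \eqref{eq:potential-reality} and the normalization \eqref{eq:residue}, and pinning down uniqueness of $q$ and $\eta_\lambda$ — are details the paper's proof leaves implicit, and they are carried out correctly.
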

\begin{proof}
We compute
\begin{equation*} \begin{split}
	\mathrm{adj} K_1 K_0 &= \begin{pmatrix}
 4 A_1  -4 B_1 \lambda^{-1}  & -(\lambda - \lambda^{-1}) \\
 -(\lambda -  \lambda^{-1}) & 4 A_1  - 4 B_1 \lambda
\end{pmatrix}   \begin{pmatrix}
 4 A_0  -4 B_0 \lambda  & \lambda - \lambda^{-1} \\
 \lambda -  \lambda^{-1} & 4 A_0  - 4 B_0 \lambda^{-1}
\end{pmatrix} \\
	= &\bigl( 16(A_0A_1+B_0B_1) - (\lambda-\lambda^{-1})^2 \bigr) \,\mathbbm{1} \,\, + \\
	\lambda^{-1}&\begin{pmatrix} -16(A_0B_1 + A_1B_0 \lambda^2) & 4(A_1-A_0-(B_1-B_0)\lambda^{-1})(\lambda^2-1) \\
4(A_1-A_0-(B_1-B_0)\lambda)(\lambda^2-1) & -16(A_1 B_0 + A_0 B_1\lambda^2) \end{pmatrix}
\end{split}
\end{equation*}
The last matrix is trace-free if and only if \eqref{eq:sign-change} holds, and then computes to
\begin{equation*} \begin{split}
	\begin{pmatrix} -16(A_0B_1 +A_1  B_0 \lambda^2) & 4(A_1-A_0-(B_1-B_0)\lambda^{-1})(\lambda^2-1) \\
4(A_1-A_0-(B_1-B_0)\lambda)(\lambda^2-1) & -16(A_1B_0 + A_0 B_1\lambda^2) \end{pmatrix}& \\ = 4 (\lambda^2-1) \begin{pmatrix} 4A_0 B_1 & A_1-A_0-(B_1-B_0)\lambda^{-1} \\
A_1-A_0-(B_1-B_0)\lambda & -4A_0 B_1 \end{pmatrix}&\,.
\end{split}
\end{equation*}
Replacing $\mathrm{adj}K_1 = \det K_1 K_1^{-1}$ and putting the above computation together we set 
\begin{equation} \label{eq:fgK1K0} \begin{split}
p(\lambda) &=\bigl( 16(A_0A_1+B_0B_1) - (\lambda-\lambda^{-1})^2 \bigr)/ \det K_1 (\lambda)\,, \\
q(\lambda) &= - 4\mi (\lambda - \lambda^{-1})/ \det K_1 (\lambda)\,, \\
\eta_\lambda &= \mi  \begin{pmatrix} 4A_0B_1 & A_1-A_0-(B_1-B_0)\lambda^{-1} \\
A_1-A_0-(B_1-B_0)\lambda & -4A_0B_1 \end{pmatrix}\,.
\end{split}
\end{equation}
\end{proof}
We next show that potentials that are K-symmetric with respect to two distinct K-matrices only give rise to Delaunay surfaces. 
\begin{corollary}
Let $K_0,\,K_1$ be two K-matrices satisfying \eqref{eq:sign-change}, and $\xi_\lambda$ a potential that is K-symmetric with respect to both $K_0$ and $K_1$. Then $\xi_\lambda$ is a polynomial multiple of a degree 1 potential.
\end{corollary}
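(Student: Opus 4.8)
The plan is to turn the two K-symmetries into a single commutation relation for $\xi_\lambda$ and then exploit that the centralizer of a nonzero trace-free $2\times2$ matrix is one-dimensional. First I would record that K-symmetry \eqref{eq:Sklyanin-potentials} of $\xi_\lambda$ with respect to $K_j$ ($j=0,1$) is equivalent to $\overline{\xi_{\bar\lambda}}^t = -K_j\,\xi_\lambda\,K_j^{-1}$ on the complement of the (finite) null set of $\det K_j$. Imposing this for both $j$ gives $K_0\,\xi_\lambda\,K_0^{-1} = K_1\,\xi_\lambda\,K_1^{-1}$, that is $[\,K_1^{-1}K_0,\ \xi_\lambda\,]=0$ for all $\lambda$ outside a finite set. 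Since \eqref{eq:sign-change} holds, Proposition \ref{th:K1K0} lets me substitute $K_1^{-1}K_0 = p\,\mathbbm 1 + q\,\eta_\lambda$ with $q\not\equiv0$ and $\eta_\lambda$ the explicit degree-one potential of \eqref{eq:fgK1K0}; as $\mathbbm 1$ is central this collapses to $[\,\eta_\lambda,\ \xi_\lambda\,]=0$ wherever $q(\lambda)\neq0$, hence — the commutator being a Laurent polynomial that vanishes on a cofinite set — for every $\lambda\in\C^\ast$.

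Next I would use that for nonzero $Y\in\mathfrak{sl}_2(\C)$ the centralizer $\{X\in\mathfrak{sl}_2(\C):[X,Y]=0\}$ equals $\C\,Y$, whether $Y$ is semisimple or nilpotent. Applying this at each $\lambda$ with $\eta_\lambda\neq0$ produces a scalar $c(\lambda)$ with $\xi_\lambda=c(\lambda)\,\eta_\lambda$; comparing a matrix entry where $\eta_\lambda$ is not identically zero exhibits $c$ as a rational function of $\lambda$, and $\xi_\lambda=c(\lambda)\,\eta_\lambda$ then holds identically as $\mathfrak{sl}_2(\C)$-valued rational maps in $\lambda$. It remains to see that $c$ is a polynomial: multiplying through by $\lambda$ makes $\lambda\xi_\lambda$ and $\lambda\eta_\lambda$ genuine matrix polynomials with $\lambda\xi_\lambda=c(\lambda)\,(\lambda\eta_\lambda)$, so writing $c=f/g$ in lowest terms forces $g$ to divide the greatest common divisor of the entries of $\lambda\eta_\lambda$. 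A direct look at the entries of $\eta_\lambda$ in \eqref{eq:fgK1K0} shows this gcd is a nonzero constant whenever $\eta_\lambda$ has nonzero diagonal, and there $c$ is a polynomial and $\xi_\lambda=c(\lambda)\,\eta_\lambda$ is exactly the asserted expression. In the complementary case $A_0B_1=0$ the matrix $\eta_\lambda$ is off-diagonal, so $\xi_\lambda=c(\lambda)\eta_\lambda$ has $\alpha_\lambda\equiv0$ and is itself off-diagonal; then the structure theorem for off-diagonal K-symmetric potentials proved above already displays it as a polynomial times a degree-one potential. The degenerate possibility that $\eta_\lambda$ is constant is excluded, since $\xi_\lambda=c(\lambda)\eta_\lambda$ would then have vanishing $\lambda^{-1}$-coefficient, contradicting the residue normalization \eqref{eq:residue}.

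I expect the polynomiality of $c$ to be the only delicate step, i.e. ruling out poles of $c$ at points of $\C^\ast$ where $\eta_\lambda$ could drop rank faster than $\xi_\lambda$; the plan there is the gcd inspection of \eqref{eq:fgK1K0} together with \eqref{eq:residue}, and falling back on the already-established off-diagonal structure theorem in the degenerate configurations. Everything else is bookkeeping: substituting the Proposition into the commutation relation and reading off scalars from entries.
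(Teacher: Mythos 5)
Your argument follows the paper's proof step for step: both K-symmetries combine into $[K_1^{-1}K_0,\,\xi_\lambda]=0$, Proposition \ref{th:K1K0} with hypothesis \eqref{eq:sign-change} reduces this to $[\eta_\lambda,\,\xi_\lambda]=0$, and the one-dimensionality of centralizers in $\mathfrak{sl}_2(\C)$ gives $\xi_\lambda=r_\lambda\,\eta_\lambda$. The paper asserts that last step without comment; your gcd inspection of $\lambda\eta_\lambda$ and the use of \eqref{eq:residue} to exclude the degenerate configurations supply the omitted justification and are sound.
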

\begin{proof}
Suppose we have two different K-matrices $K_0$ and $K_1$ that satisfy the condition \eqref{eq:sign-change}. Suppose we have a potential $\xi_\lambda$ that is K-symmetric with respect to both $K_0$ and $K_1$. Then
\[
	\overline{\xi_{\bar\lambda}}^t = -K_0\xi_\lambda K_0^{-1} = -K_1\xi_\lambda K_1^{-1} 
\]	
holds if and only if
\begin{equation}
	[ K_1^{-1} K_0,\,\xi_\lambda ] = 0\,.
\end{equation}
Using \eqref{eq:K1K0} gives $[  p\,\mathbbm{1} +  q\,\eta_\lambda\,,\,\xi_\lambda ] = 0$ and hence $[\eta_\lambda,\,\xi_\lambda ] =0$. Thus $\xi_\lambda = r_\lambda \eta_\lambda$ for some polynomial $r_\lambda$. 
\end{proof}
\subsection{The case $B_0 = B_1$.} If we diagonalize as in \eqref{eq:Kdiagonal}, then $K_0 = V_0\, \mathrm{diag}[\,\mu^0_-,\,\mu^0_+\,] \,V_0^{-1}$ and $K_1^{-1} = V_1\, \mathrm{diag}[\,1/\mu^1_-,\,1/\mu^1_+\,] \,V_1^{-1}$ . If the constant $B$ in the K-matrices $K_0$ and $K_1$ coincide $B_0 = B_1$, then the matrix of eigenvectors $V=V_0 = V_1$ are equal, and 
\begin{equation} 
	K_1^{-1} K_0 = V \,\mathrm{diag}[\,\mu^0_-/\mu^1_-,\,\mu^0_+/\mu^1_+\,]  V^{-1} \,.
\end{equation}
%

%
%
%%%%%%%%%%%%%%%%%%%%%%%%%%%%
%
\section{Two integrable boundary conditions} 

Suppose in our domain $U$ we have beside the curve $y=0$ a second curve $y=y_1 \neq 0$, and a  solution $\omega$ of the sinh-Gordon equation that satisfies two boundary conditions
\begin{equation} \label{eq:2-boundary1}
	\begin{split}
	&\omega_y = e^\omega A_0 + e^{-\omega} B_0 \quad \mbox{ along } y=0 \,,\\
	& \omega_y = e^\omega A_1 + e^{-\omega} B_1 \quad \mbox{ along } y=y_1\,.
\end{split}
\end{equation}
We then have two K-matrices $K_0,\,K_1$ as in \eqref{eq:K0K1} and
\begin{equation}\begin{split}
	K_0 \mathbf{U}_\lambda &= \mathbf{U}_{\lambda^{-1}} K_0 \qquad \mbox{along} \quad y=0  \\
	K_1 \mathbf{U}_\lambda &= \mathbf{U}_{\lambda^{-1}} K_1 \qquad \mbox{along} \quad y=y_1\,.
\end{split}
\end{equation}
In the 1-boundary case the $y=0$ curve goes through the base point $z=0$, and Lemma \ref{lem:Fswitch2} assures us $K\,\mathbf{F}_\lambda =  \mathbf{F}_{\lambda^{-1}}\,K$ along $y=0$. The $y=y_1$ curve does not contain the base point at which $\mathbf{F}_\lambda (0) = \mathbbm{1}$, and we only get a \emph{dressed} $K$-symmetry.
\begin{lemma} \label{lem:Fswitch}
$\mathrm{(1)}$ Suppose $\tfrac{\partial}{\partial x}\mathbf{F}_\lambda = \mathbf{F}_\lambda \mathbf{U}_\lambda$. Then $K_1\,\mathbf{U}_\lambda = \mathbf{U}_{\lambda^{-1}} \,K_1$ along $y=y_1$, if and only if  there exists a $z$-independent matrix $C_\lambda$ such that
\begin{equation} \label{eq:FK1}
	 K_1\,\mathbf{F}_\lambda =  C_\lambda \,\mathbf{F}_{\lambda^{-1}}\,K _1 \qquad \mbox{\rm{along}} \quad y=y_1\,.
\end{equation}
$\mathrm{(2)}$ The map $\lambda \mapsto C_\lambda$ is holomorphic on $\mathbb{C}^\ast$. Further, $\det C_\lambda = 1$ for all $\lambda \in \mathbb{C}^\ast$,  and $C_\lambda \in \mathrm{SU}_2$ for all $|\lambda |=1$.
\end{lemma}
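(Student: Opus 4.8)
The plan is to treat part (1) exactly as in Lemma \ref{lem:Fswitch2}, but keeping track of the base-point discrepancy, and then to extract the analytic properties of $C_\lambda$ in part (2) from the three symmetries already recorded for $\mathbf{U}_\lambda$, $\mathbf{F}_\lambda$ and $K$.

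For part (1), first assume $K_1\,\mathbf{U}_\lambda = \mathbf{U}_{\lambda^{-1}}\,K_1$ along $y=y_1$. Set $\mathbf{G}_\lambda = K_1\,\mathbf{F}_\lambda\,K_1^{-1}$ and differentiate in $x$ along the curve $y=y_1$: using $\tfrac{\partial}{\partial x}\mathbf{F}_\lambda = \mathbf{F}_\lambda\mathbf{U}_\lambda$ and the assumed commutation relation gives $\tfrac{\partial}{\partial x}\mathbf{G}_\lambda = \mathbf{G}_\lambda\,\mathbf{U}_{\lambda^{-1}}$, the same ODE in $x$ satisfied by $\mathbf{F}_{\lambda^{-1}}$ restricted to $y=y_1$. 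The two solutions therefore differ by an $x$-independent left factor: $\mathbf{G}_\lambda = C_\lambda\,\mathbf{F}_{\lambda^{-1}}$ with $C_\lambda$ constant in $x$ along $y=y_1$ (it is the value of $\mathbf{G}_\lambda\,\mathbf{F}_{\lambda^{-1}}^{-1}$ at the point where the curve $y=y_1$ meets the domain, so it depends only on $\lambda$ and on $y_1$, not on $x$). Rearranging yields \eqref{eq:FK1}. Conversely, given \eqref{eq:FK1} with $C_\lambda$ independent of $z$, differentiate in $x$ to get $K_1\,\mathbf{F}_\lambda\mathbf{U}_\lambda = C_\lambda\,\mathbf{F}_{\lambda^{-1}}\mathbf{U}_{\lambda^{-1}}K_1$, and substitute \eqref{eq:FK1} again to cancel $C_\lambda\,\mathbf{F}_{\lambda^{-1}}$ and obtain $K_1\,\mathbf{U}_\lambda = \mathbf{U}_{\lambda^{-1}}\,K_1$ along $y=y_1$.

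For part (2), solve \eqref{eq:FK1} for $C_\lambda = K_1\,\mathbf{F}_\lambda\,K_1^{-1}\,\mathbf{F}_{\lambda^{-1}}^{-1}$, evaluated at any point of the curve $y=y_1$; this expression is manifestly holomorphic in $\lambda \in \mathbb{C}^\ast$ because $\lambda \mapsto \mathbf{F}_\lambda$, $\lambda \mapsto \mathbf{F}_{\lambda^{-1}}$ and $\lambda \mapsto K_1(\lambda)$ all are, and $K_1(\lambda)$ is invertible away from the finite set $\mathcal N$ (and the apparent singularities there cancel, since the left-hand side $K_1\mathbf{F}_\lambda$ of \eqref{eq:FK1} is entire in $\lambda$, forcing $C_\lambda$ to extend holomorphically across $\mathcal N$). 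For the determinant: $\mathbf{F}_\lambda, \mathbf{F}_{\lambda^{-1}} \in \mathrm{SU}_2$ so have determinant $1$, and $\det(K_1 K_1^{-1}) = 1$, hence $\det C_\lambda = 1$. Finally, for $|\lambda| = 1$ we have $\lambda^{-1} = \bar\lambda$, and using $\overline{\mathbf{F}_{1/\bar\lambda}}^t = \mathbf{F}_\lambda^{-1}$ together with $\overline{K_1(\bar\lambda)} = K_1(\lambda)$ from \eqref{eq:Kproperties}, one checks directly from $C_\lambda = K_1\,\mathbf{F}_\lambda\,K_1^{-1}\,\mathbf{F}_{\bar\lambda}^{-1}$ that $\overline{C_\lambda}^t = K_1^{-1}\,\overline{\mathbf{F}_{\bar\lambda}^{-1}}^t\,\mathbf{F}_\lambda^{-1}\,K_1 = K_1^{-1}\,\mathbf{F}_\lambda\,\mathbf{F}_\lambda^{-1}\,K_1 = C_\lambda^{-1}$; combined with $\det C_\lambda = 1$ this gives $C_\lambda \in \mathrm{SU}_2$ on $|\lambda| = 1$.

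The main obstacle is the bookkeeping in part (2) around the zeros of $\det K_1$: a priori $C_\lambda$ is only defined as a ratio with poles at $\mathcal N$, and one must argue that these are removable. The clean way is to note that \eqref{eq:FK1} exhibits $C_\lambda\,\mathbf{F}_{\lambda^{-1}}$, hence $C_\lambda$ (since $\mathbf{F}_{\lambda^{-1}} \in \mathrm{SU}_2$ is invertible for every $\lambda \in \mathbb{C}^\ast$), as equal to the entire function $K_1\,\mathbf{F}_\lambda\,K_1^{-1}\cdot(\text{stuff})$ — more precisely, rewrite \eqref{eq:FK1} as $C_\lambda = (K_1\,\mathbf{F}_\lambda\,K_1^{-1})\,\mathbf{F}_{\lambda^{-1}}^{-1}$ and observe that the conjugation $\mathbf{F}_\lambda \mapsto K_1\mathbf{F}_\lambda K_1^{-1}$ has entries that are rational in $\lambda$ with possible poles only at $\mathcal N$, while the left side of the original relation, $K_1\mathbf{F}_\lambda$, is polynomial in $\lambda, \lambda^{-1}$; equating and using invertibility of $\mathbf{F}_{\lambda^{-1}}$ shows $C_\lambda$ is holomorphic on all of $\mathbb{C}^\ast$. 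Everything else is a short direct computation using the symmetries already established.
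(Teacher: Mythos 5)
Your proof of part (1) is exactly the paper's: set $\mathbf{G}_\lambda = K_1\mathbf{F}_\lambda K_1^{-1}$, observe it solves the same ODE in $x$ as $\mathbf{F}_{\lambda^{-1}}$ along $y=y_1$, and conclude the two fundamental solutions differ by an $x$-independent left factor $C_\lambda$; the converse by differentiating \eqref{eq:FK1} also matches. For part (2) the paper is very terse (it simply asserts that $G_\lambda$, hence $C_\lambda = G_\lambda\mathbf{F}_{\lambda^{-1}}^{-1}$, is holomorphic on $\C^\ast$ and that the remaining properties are ``inherited''), and your determinant and unitarity computations are the intended ones and are correct (modulo a garbled intermediate line: $\overline{K_1(\lambda)}^{\,t} = \mathrm{adj}\,K_1(\lambda) = \det K_1(\lambda)\,K_1(\lambda)^{-1}$ rather than $K_1(\lambda)$ itself on $|\lambda|=1$, but the scalar factors $\det K_1$ cancel and the conclusion $\overline{C_\lambda}^{\,t} = C_\lambda^{-1}$ stands).

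The one place where your added detail does not hold up is the removability argument at $\mathcal{N}$. The deduction ``$K_1\mathbf{F}_\lambda$ is holomorphic on $\C^\ast$, hence $C_\lambda$ extends holomorphically across $\mathcal{N}$'' is not valid: at $\lambda_0\in\mathcal{N}$ the matrix $K_1(\lambda_0)$ on the \emph{right}-hand side of \eqref{eq:FK1} drops rank, so holomorphy of the product $C_\lambda\,\mathbf{F}_{\lambda^{-1}}K_1$ only forces the residue $R$ of $C_\lambda$ at $\lambda_0$ to satisfy $R\,\mathbf{F}_{\lambda_0^{-1}}K_1(\lambda_0)=0$, a single rank-one condition that does not by itself give $R=0$. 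Concretely, $\mathrm{res}[C_\lambda,\lambda_0]$ is proportional to $K_1(\lambda_0)\,\mathbf{F}_{\lambda_0}\,\mathrm{adj}\,K_1(\lambda_0)\,\mathbf{F}_{\lambda_0^{-1}}^{-1}$, which vanishes precisely when $\mathbf{F}_{\lambda_0}$ preserves $\ker K_1(\lambda_0)$; this needs a separate argument. To be fair, the paper's own proof asserts holomorphy of $G_\lambda$ on $\C^\ast$ with no more justification than you give, so you are reproducing rather than creating this gap --- but since you singled it out as ``the main obstacle'' and claimed to resolve it, you should be aware that the resolution offered is not yet a proof.
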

\begin{proof}
(1) Suppose $K_1\,\mathbf{U}_\lambda = \mathbf{U}_{\lambda^{-1}} \,K_1$ along $y=y_1$. Consider $G_\lambda = K_1\,F_\lambda\,K_1^{-1}$. Then $G'_\lambda = G_\lambda \mathbf{U}_{\lambda^{-1}}$ and $F'_{\lambda^{-1}} =   F_{\lambda^{-1}} \mathbf{U}_{\lambda^{-1}}$. Since $G_\lambda$ and $F_{\lambda^{-1}}$ are both fundamental solutions of the same ordinary differential equation, there exists a $z$-independent, $\lambda$-dependent matrix $C_\lambda$ such that $G_\lambda = C_\lambda F_{\lambda^{-1}}$.  Thus $ K_1\,F_\lambda\,K_1^{-1} = C_\lambda F_{\lambda^{-1}}$ and the claim follows. Note that 
\begin{equation}\label{eq:defC}
	C_\lambda = K_1\,F_\lambda\,K_1^{-1} F_{\lambda^{-1}}^{-1} = K_1\,F_\lambda\,K_1^{-1} \overline{F_{\bar\lambda}}^{t} \,. 
\end{equation}
Conversely, differentiating \eqref{eq:FK1} gives $K_1\,\mathbf{F}_\lambda \mathbf{U}_\lambda=  C_\lambda \,\mathbf{F}_{\lambda^{-1}} \mathbf{U}_{\lambda^{-1}}\,K _1$ and using \eqref{eq:FK1} gives the claim.

(2) By definintion $C_\lambda = G_\lambda F_{\lambda^{-1}}^{-1}$, and since $G_\lambda, \,F_\lambda$ are holomorphic on $\mathbb{C}^\ast$, the same is true for $C_\lambda$. Similarly, $C_\lambda$ inherits its other asserted properties. 
\end{proof}
\begin{corollary}
Suppose $\mathbf{F}_\lambda$ satisfies \eqref{eq:FK1} and $\zeta_\lambda = \mathbf{F}_\lambda^{-1} \xi_\lambda \mathbf{F}_\lambda$ is a polynomial Killing field. Then along $y=y_1$ we have
\begin{equation} \label{eq:K1pKf}
	K_1\,\zeta_\lambda + \overline{\zeta_{\bar{\lambda}}}^t K_1  = \mathbf{F}_{\lambda^{-1}}^{-1} \bigl( \, C_\lambda^{-1} K_1 \,\xi_\lambda + \overline{\xi_{\bar{\lambda}}}^t C_\lambda^{-1} K_1 \,\bigr) \,\mathbf{F}_\lambda\,.
\end{equation}
\end{corollary}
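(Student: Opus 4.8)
The plan is to mimic the derivation of \eqref{eq:PfKsym1} in the earlier corollary, replacing the undressed symmetry $K\mathbf{F}_\lambda = \mathbf{F}_{\lambda^{-1}}K$ by the dressed version \eqref{eq:FK1}. First I would substitute $\zeta_\lambda = \mathbf{F}_\lambda^{-1}\xi_\lambda\mathbf{F}_\lambda$ into the left-hand side. For the second summand I would use that $\overline{\zeta_{\bar\lambda}}^{\,t} = \overline{\mathbf{F}_{\bar\lambda}}^{\,t}\,\overline{\xi_{\bar\lambda}}^{\,t}\,\bigl(\overline{\mathbf{F}_{\bar\lambda}}^{\,t}\bigr)^{-1}$ together with the reality relation $\overline{\mathbf{F}_{1/\bar\lambda}}^{\,t} = \mathbf{F}_\lambda^{-1}$ recorded in Section~1, which gives $\overline{\mathbf{F}_{\bar\lambda}}^{\,t} = \mathbf{F}_{\lambda^{-1}}^{-1}$ and hence $\overline{\zeta_{\bar\lambda}}^{\,t} = \mathbf{F}_{\lambda^{-1}}^{-1}\,\overline{\xi_{\bar\lambda}}^{\,t}\,\mathbf{F}_{\lambda^{-1}}$. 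At this stage the left-hand side has become $K_1\mathbf{F}_\lambda^{-1}\xi_\lambda\mathbf{F}_\lambda + \mathbf{F}_{\lambda^{-1}}^{-1}\overline{\xi_{\bar\lambda}}^{\,t}\mathbf{F}_{\lambda^{-1}}K_1$.

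Next I would rewrite \eqref{eq:FK1} in the two forms needed to pull $\mathbf{F}_{\lambda^{-1}}^{-1}$ to the far left and $\mathbf{F}_\lambda$ to the far right. Inverting $K_1\mathbf{F}_\lambda = C_\lambda\mathbf{F}_{\lambda^{-1}}K_1$ yields $K_1\mathbf{F}_\lambda^{-1} = \mathbf{F}_{\lambda^{-1}}^{-1}C_\lambda^{-1}K_1$, and rearranging it directly yields $\mathbf{F}_{\lambda^{-1}}K_1 = C_\lambda^{-1}K_1\mathbf{F}_\lambda$; here $C_\lambda$ is invertible by Lemma~\ref{lem:Fswitch}(2), since $\det C_\lambda = 1$. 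Applying the first identity to the first summand and the second identity to the second summand, both summands acquire the common left factor $\mathbf{F}_{\lambda^{-1}}^{-1}$ and the common right factor $\mathbf{F}_\lambda$, and collecting gives
\[
	K_1\zeta_\lambda + \overline{\zeta_{\bar\lambda}}^{\,t} K_1 = \mathbf{F}_{\lambda^{-1}}^{-1}\bigl(C_\lambda^{-1}K_1\xi_\lambda + \overline{\xi_{\bar\lambda}}^{\,t}C_\lambda^{-1}K_1\bigr)\mathbf{F}_\lambda,
\]
which is \eqref{eq:K1pKf}. All of this is along $y=y_1$, since that is where \eqref{eq:FK1} is available.

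I do not expect a genuine obstacle here; the only thing requiring care is the bookkeeping of inverses and left/right placement when transcribing \eqref{eq:FK1} into the two auxiliary forms, and keeping track that the conjugation identity for $\mathbf{F}$ is the one with $\lambda \mapsto 1/\bar\lambda$ rather than $\lambda \mapsto \bar\lambda$. Everything else is a direct substitution, exactly parallel to the undressed case, with $C_\lambda^{-1}K_1$ playing the role that $K$ played there.
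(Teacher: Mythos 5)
Your computation is correct and is exactly the intended argument (the paper leaves this corollary unproved, as it did the undressed analogue \eqref{eq:PfKsym1}): substitute $\zeta_\lambda=\mathbf{F}_\lambda^{-1}\xi_\lambda\mathbf{F}_\lambda$, use $\overline{\mathbf{F}_{\bar\lambda}}^{\,t}=\mathbf{F}_{\lambda^{-1}}^{-1}$ to handle the conjugate-transpose term, and apply \eqref{eq:FK1} in the two rearranged forms $K_1\mathbf{F}_\lambda^{-1}=\mathbf{F}_{\lambda^{-1}}^{-1}C_\lambda^{-1}K_1$ and $\mathbf{F}_{\lambda^{-1}}K_1=C_\lambda^{-1}K_1\mathbf{F}_\lambda$. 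Both auxiliary identities and the final collection of factors check out, so there is nothing to add.
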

Hence $\zeta_\lambda$ is $K_1$-symmetric along $y=y_1$ if and only if its potential $\xi_\lambda$ satisfies the dressed $K_1$-symmetry 
\begin{equation}\label{eq:dressedK-symmetry}
	C_\lambda^{-1} K_1 \xi_\lambda + \overline{\xi_{\bar\lambda}}^t C_\lambda^{-1} K_1 = 0\,.
\end{equation}
If in addition the potential is $K_0$-symmetric, then we can replace $\overline{\xi_{\bar{\lambda}}}^t = - K_0 \xi_\lambda K_0^{-1}$ in \eqref{eq:K1pKf} and obtain along $y=y_1$ that
\begin{equation} \label{eq:K1pKf2}
	K_1\,\zeta_\lambda + \overline{\zeta_{\bar{\lambda}}}^t K_1  = \mathbf{F}_{\lambda^{-1}}^{-1} \, K_0 \,\, \bigl[ \, K_0^{-1}  C_\lambda^{-1} K_1 ,\,\xi_\lambda \,\bigr] \,\mathbf{F}_\lambda\,.
\end{equation}
Then $\zeta_\lambda$ is $K_1$-symmetric along $y=y_1$ if and only if 
\begin{equation} \label{eq:M}
	[M_\lambda,\,\xi_\lambda ] = 0 \quad \mbox{ for } \quad M_\lambda =  K_1^{-1} C_\lambda K_0 \,.
\end{equation}
\begin{corollary}
Suppose $\mathbf{F}_\lambda$ satisfies \eqref{eq:FK1} and $\zeta_\lambda = \mathbf{F}_\lambda^{-1} \xi_\lambda \mathbf{F}_\lambda$ is a polynomial Killing field with $K_0$-symmetric potential. Then $\zeta_\lambda$ is $K_1$-symmetric along $y=y_1$ if and only if \eqref{eq:M} holds.
\end{corollary}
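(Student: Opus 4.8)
The plan is to read the statement off the identity \eqref{eq:K1pKf2} that was just derived, so the proof is a short reduction rather than a new computation.

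First I would invoke the Corollary containing \eqref{eq:K1pKf}: because $\mathbf{F}_\lambda$ satisfies \eqref{eq:FK1}, the $K_1$-defect of the polynomial Killing field $\zeta_\lambda = \mathbf{F}_\lambda^{-1}\xi_\lambda\mathbf{F}_\lambda$ along $y=y_1$ equals $\mathbf{F}_{\lambda^{-1}}^{-1}\bigl(C_\lambda^{-1}K_1\xi_\lambda + \overline{\xi_{\bar\lambda}}^t C_\lambda^{-1}K_1\bigr)\mathbf{F}_\lambda$. Then I would use the hypothesis that the potential is $K_0$-symmetric, that is $\overline{\xi_{\bar\lambda}}^t = -K_0\xi_\lambda K_0^{-1}$ by \eqref{eq:Sklyanin-potentials}, substitute this into the bracket, and pull $K_0$ out on the left to arrive at \eqref{eq:K1pKf2},
\[
	K_1\,\zeta_\lambda + \overline{\zeta_{\bar\lambda}}^t K_1 = \mathbf{F}_{\lambda^{-1}}^{-1}\,K_0\,\bigl[\,K_0^{-1}C_\lambda^{-1}K_1,\,\xi_\lambda\,\bigr]\,\mathbf{F}_\lambda\,.
\]

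Next I would observe that $\mathbf{F}_\lambda$, $\mathbf{F}_{\lambda^{-1}}$ and $K_0$ are invertible — the last for $\lambda$ outside the null set $\mathcal N$ of $\det K_0$, and $C_\lambda$ is invertible on all of $\C^\ast$ since $\det C_\lambda = 1$ by Lemma \ref{lem:Fswitch}. Hence the left-hand side vanishes along $y=y_1$ if and only if $[\,K_0^{-1}C_\lambda^{-1}K_1,\,\xi_\lambda\,] = 0$ there. Since for any invertible matrix $X$ one has $[X,\xi]=0$ if and only if $[X^{-1},\xi]=0$, and the inverse of $K_0^{-1}C_\lambda^{-1}K_1$ is exactly $M_\lambda = K_1^{-1}C_\lambda K_0$, this is in turn equivalent to \eqref{eq:M}. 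Both directions of the claimed equivalence fall out of this single chain.

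The one place that needs care is the finite set of $\lambda$ at which $K_0$ or $K_1$ drops rank, where $K_0^{-1}$ and $M_\lambda$ are not literally defined. I would handle this either by continuity — $\xi_\lambda$ is a Laurent polynomial in $\lambda$ and $C_\lambda$ is holomorphic on $\C^\ast$, so a commutator that vanishes on the dense complement of the relevant null sets vanishes identically — or by clearing denominators, rewriting $[M_\lambda,\xi_\lambda]=0$ as the polynomial identity $[\,\mathrm{adj}(K_1)\,C_\lambda\,K_0,\,\xi_\lambda\,]=0$ in $\lambda$. This bookkeeping is essentially the only obstacle; the algebraic substance is already present in the lines leading up to \eqref{eq:K1pKf2}.
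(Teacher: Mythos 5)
Your argument is correct and follows the paper's own route: the corollary is read off from \eqref{eq:K1pKf2}, which is obtained by substituting $\overline{\xi_{\bar\lambda}}^t = -K_0\xi_\lambda K_0^{-1}$ into \eqref{eq:K1pKf} and factoring out $K_0$. The only additions you make beyond what the paper states explicitly are the (correct) observation that $[X,\xi]=0 \Leftrightarrow [X^{-1},\xi]=0$ to pass from $K_0^{-1}C_\lambda^{-1}K_1$ to $M_\lambda=K_1^{-1}C_\lambda K_0$, and the continuity argument at the finitely many $\lambda$ where $K_0$ or $K_1$ drops rank — sensible bookkeeping that the paper leaves implicit.
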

The proof of the next result is analogous to the proof of Lemma~\ref{th:KPhi}.
\begin{lemma} 
Let $\xi_\lambda$ satisfy the dressed $K_1$-symmetry \eqref{eq:dressedK-symmetry}. Then  
\begin{equation} \label{eq:dressedPhiK-symmetry}
	K_1\,\mathbf{\Phi}_\lambda (z) = C_\lambda \,\overline{\mathbf{\Phi}_{\bar\lambda} (\bar z)} ^{t^{-1}} C_\lambda^{-1} K_1 \qquad \mbox{for all $z \in U$}\,.
\end{equation}
\end{lemma}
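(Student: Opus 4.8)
The plan is to mimic the proof of Lemma~\ref{th:KPhi} exactly, only tracking the extra conjugating factor $C_\lambda$ that appears in the dressed symmetry. Recall that $\mathbf{\Phi}_\lambda(z) = \exp[z\,\xi_\lambda]$, so that $\overline{\mathbf{\Phi}_{\bar\lambda}(\bar z)}^{\,t} = \exp\bigl[z\,\overline{\xi_{\bar\lambda}}^{\,t}\bigr]$. The dressed $K_1$-symmetry \eqref{eq:dressedK-symmetry} can be rearranged as $\overline{\xi_{\bar\lambda}}^{\,t} = -\,(C_\lambda^{-1} K_1)\,\xi_\lambda\,(C_\lambda^{-1}K_1)^{-1}$, and since $C_\lambda$ is $z$-independent the matrix $C_\lambda^{-1}K_1$ is $z$-independent and conjugates the exponential term by term.

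Concretely, I would write
\[
	\overline{\mathbf{\Phi}_{\bar\lambda}(\bar z)}^{\,t}
	= \exp\bigl[ z\,\overline{\xi_{\bar\lambda}}^{\,t}\bigr]
	= \exp\bigl[ -z\,(C_\lambda^{-1}K_1)\,\xi_\lambda\,(C_\lambda^{-1}K_1)^{-1}\bigr]
	= (C_\lambda^{-1}K_1)\,\mathbf{\Phi}_\lambda^{-1}(z)\,(C_\lambda^{-1}K_1)^{-1}\,,
\]
using that conjugation commutes with the exponential and that $\exp[-z\,\xi_\lambda] = \mathbf{\Phi}_\lambda^{-1}(z)$. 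Taking the inverse transpose of the identity just obtained—equivalently, solving for $\mathbf{\Phi}_\lambda(z)$—gives
\[
	\mathbf{\Phi}_\lambda(z) = (C_\lambda^{-1}K_1)^{-1}\,\overline{\mathbf{\Phi}_{\bar\lambda}(\bar z)}^{\,t^{-1}}\,(C_\lambda^{-1}K_1) = K_1^{-1}C_\lambda\,\overline{\mathbf{\Phi}_{\bar\lambda}(\bar z)}^{\,t^{-1}}\,C_\lambda^{-1}K_1\,,
\]
and multiplying on the left by $K_1$ produces exactly \eqref{eq:dressedPhiK-symmetry}. Since the claimed identity holds at every $z\in U$ and the computation only used that $\exp[z\,\xi_\lambda]$ is a power series, it holds for all $z$ as asserted, with no restriction to $y=0$ or $y=y_1$.

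There is really no serious obstacle here; the only point requiring a moment's care is bookkeeping with the inverse transpose, i.e.\ checking that $\bigl((C_\lambda^{-1}K_1)\,M\,(C_\lambda^{-1}K_1)^{-1}\bigr)^{t^{-1}}$ equals $(C_\lambda^{-1}K_1)^{-t}\,M^{t^{-1}}\,(C_\lambda^{-1}K_1)^{t}$ and that $K_1^t = K_1$ by \eqref{eq:Kproperties}, so the transposes of $K_1$ simplify correctly; one should also note $C_\lambda$ need not be symmetric, so the factors $C_\lambda$ and $C_\lambda^{-1}$ cannot be collapsed. Given the remark in the excerpt that ``the proof is analogous to the proof of Lemma~\ref{th:KPhi}'', it suffices to record the one-line exponential computation above and let the reader supply the transposition step.
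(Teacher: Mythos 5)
Your argument is correct and is exactly the computation the paper intends when it says the proof is ``analogous to the proof of Lemma~\ref{th:KPhi}'': rewrite \eqref{eq:dressedK-symmetry} as $\overline{\xi_{\bar\lambda}}^{\,t} = -(C_\lambda^{-1}K_1)\,\xi_\lambda\,(C_\lambda^{-1}K_1)^{-1}$, exponentiate, and invert. The only quibble is terminological: the step from $\overline{\mathbf{\Phi}_{\bar\lambda}(\bar z)}^{\,t} = (C_\lambda^{-1}K_1)\,\mathbf{\Phi}_\lambda^{-1}(z)\,(C_\lambda^{-1}K_1)^{-1}$ to the final identity is just taking inverses of both sides (no further transposition is needed, so the closing remarks about $K_1^t=K_1$ and transposing the conjugated matrix are superfluous), but your displayed formulas are all correct.
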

Finally, we compute the dressed $K_1$-summetry for $\mathbf{B}_\lambda$ from \eqref{eq:FK1} and \eqref{eq:dressedPhiK-symmetry}.
\begin{corollary}
Let $\xi_\lambda$ satisfy the dressed $K_1$-symmetry \eqref{eq:dressedK-symmetry} and the corresponding $\mathbf{F}_\lambda$ satisfy the $K_1$-symmetry \eqref{eq:FK1}along $y=y_1$. Then the dressed $K_1$-symmetry of $\mathbf{B}_\lambda$ is
\begin{equation} \label{eq:dressedBK-symmetry}
	K_1 \mathbf{B}_\lambda = \overline{\mathbf{B}}_{\bar\lambda}^{t^{-1}} C_\lambda^{-1} K_1 \qquad \mbox{ along \quad $y=y_1$}\,.
\end{equation}
\end{corollary}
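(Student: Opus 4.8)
The plan is to derive \eqref{eq:dressedBK-symmetry} by substituting the Iwasawa factorization $\mathbf{\Phi}_\lambda = \mathbf{F}_\lambda \mathbf{B}_\lambda$ into the dressed $\mathbf{\Phi}_\lambda$-symmetry \eqref{eq:dressedPhiK-symmetry}, exactly as was done in the proof of \eqref{eq:KB} from Lemma~\ref{th:KPhi}. First I would write the left side of \eqref{eq:dressedPhiK-symmetry} as $K_1 \mathbf{F}_\lambda \mathbf{B}_\lambda$ and the right side as $C_\lambda\, \overline{\mathbf{F}_{\bar\lambda}(\bar z)}^{t^{-1}}\, \overline{\mathbf{B}_{\bar\lambda}(\bar z)}^{t^{-1}}\, C_\lambda^{-1} K_1$, using that the (matrix-)transpose-inverse is multiplicative and reverses order in the same way as the inverse. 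This gives
\begin{equation*}
	K_1 \mathbf{F}_\lambda \mathbf{B}_\lambda = C_\lambda\, \overline{\mathbf{F}_{\bar\lambda}}^{t^{-1}}\, \overline{\mathbf{B}_{\bar\lambda}}^{t^{-1}}\, C_\lambda^{-1} K_1 \qquad \text{along } y=y_1.
\end{equation*}

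Next I would invoke the hypothesis that $\mathbf{F}_\lambda$ satisfies \eqref{eq:FK1} along $y=y_1$, i.e. $K_1 \mathbf{F}_\lambda = C_\lambda \mathbf{F}_{\lambda^{-1}} K_1$, to rewrite the left-hand side as $C_\lambda \mathbf{F}_{\lambda^{-1}} K_1 \mathbf{B}_\lambda$. On the right-hand side I would use the reality relation $\overline{\mathbf{F}_{\bar\lambda}}^t = \mathbf{F}_{\lambda^{-1}}^{-1}$ recorded in the introductory remarks, so that $\overline{\mathbf{F}_{\bar\lambda}}^{t^{-1}} = \mathbf{F}_{\lambda^{-1}}$. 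Cancelling the common factor $C_\lambda \mathbf{F}_{\lambda^{-1}}$ on the left of both sides — this is legitimate since $C_\lambda$ is invertible ($\det C_\lambda = 1$ by Lemma~\ref{lem:Fswitch}(2)) and $\mathbf{F}_{\lambda^{-1}}$ is invertible — yields precisely $K_1 \mathbf{B}_\lambda = \overline{\mathbf{B}_{\bar\lambda}}^{t^{-1}} C_\lambda^{-1} K_1$ along $y=y_1$, which is \eqref{eq:dressedBK-symmetry}.

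I do not anticipate a genuine obstacle here; the computation is a bookkeeping exercise in the order of matrix multiplication and in keeping straight which objects are evaluated at $\lambda$ versus $\lambda^{-1}$ (or $\bar\lambda$). The one point requiring a little care is the relation $\overline{\mathbf{\Phi}_{\bar\lambda}(\bar z)}^{t^{-1}} = \bigl(\overline{\mathbf{F}_{\bar\lambda}(\bar z)}^{t}\bigr)^{-1} \bigl(\overline{\mathbf{B}_{\bar\lambda}(\bar z)}^{t}\bigr)^{-1}$: from $\mathbf{\Phi} = \mathbf{F}\mathbf{B}$ one has $\overline{\mathbf{\Phi}}^t = \overline{\mathbf{B}}^t \overline{\mathbf{F}}^t$, and inverting reverses the order back to $\overline{\mathbf{F}}^{t^{-1}}\overline{\mathbf{B}}^{t^{-1}}$, so the factors do land in the order needed for the cancellation. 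Since everything is restricted to $y=y_1$, where \eqref{eq:FK1} is available by assumption, no further hypotheses are needed, and the argument is complete.
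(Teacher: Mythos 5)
Your proposal is correct and is exactly the computation the paper intends: substitute the Iwasawa factorization $\mathbf{\Phi}_\lambda=\mathbf{F}_\lambda\mathbf{B}_\lambda$ into \eqref{eq:dressedPhiK-symmetry}, apply \eqref{eq:FK1} on the left and the reality relation $\overline{\mathbf{F}_{\bar\lambda}}^{t}=\mathbf{F}_{\lambda^{-1}}^{-1}$ on the right, and cancel the common factor $C_\lambda\,\mathbf{F}_{\lambda^{-1}}$. The paper gives no separate proof beyond the remark that the symmetry of $\mathbf{B}_\lambda$ follows from \eqref{eq:FK1} and \eqref{eq:dressedPhiK-symmetry}, and your argument is the same bookkeeping as in the proof of \eqref{eq:KB}.
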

%
%%%%%%%%%%
In summary, in the 2-boundary case we have a polynomial Killing field $\zeta_\lambda = \mathbf{F}_\lambda^{-1} \xi_\lambda \mathbf{F}_\lambda$ where $\mathbf{F}_\lambda$ satisfies \eqref{eq:KF} and \eqref{eq:FK1}, and  
\begin{equation} \label{eq:2-boundary}
	\begin{split}
	K_0\,\zeta_\lambda + \overline{\zeta_{\bar{\lambda}}}^t K_0 &= 0 \quad \mbox{ along } y=0  \quad \Longleftrightarrow \quad K_0\,\xi_\lambda + \overline{\xi_{\bar{\lambda}}}^t K_0 = 0 \,,\\
	K_1\,\zeta_\lambda + \overline{\zeta_{\bar{\lambda}}}^t K_1 &= 0 \quad \mbox{ along } y=y_1  \quad \Longleftrightarrow \quad C_\lambda^{-1} K_1 \xi_\lambda + \overline{\xi_{\bar\lambda}}^t C_\lambda^{-1} K_1 = 0\,.
\end{split}
\end{equation}
%
%%%%%%%%%%%%%%%%%%%%%%%%%%%%%%%%%%%%%%%%%%
%
\section{The commutator} 
\begin{proposition}
The map $\lambda \mapsto M_\lambda$ in \eqref{eq:M} is meromorphic on $\mathbb{C}^\ast$ with exactly four simple poles at the four simple roots of $\det K_1$.  Also $\det M_\lambda = \det K_0 /\det K_1$.
\end{proposition}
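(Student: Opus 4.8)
The plan is to read off all three claims from the factorisation $M_\lambda = K_1^{-1}\,C_\lambda\,K_0$ together with the analytic properties of the three factors. Writing $K_1^{-1} = (\det K_1)^{-1}\,\mathrm{adj}\,K_1$, the entries of $\mathrm{adj}\,K_1$ and of $K_0$ are Laurent polynomials in $\lambda$, hence holomorphic on $\mathbb{C}^\ast$, and $C_\lambda$ is holomorphic on $\mathbb{C}^\ast$ by Lemma~\ref{lem:Fswitch}\,(2); thus
\[
	M_\lambda \;=\; \frac{1}{\det K_1(\lambda)}\,\bigl(\mathrm{adj}\,K_1(\lambda)\bigr)\,C_\lambda\,K_0(\lambda)
\]
is meromorphic on $\mathbb{C}^\ast$, its only possible poles are the zeros of $\det K_1$, and each such pole has order at most one. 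By \eqref{eq:roots} and \eqref{eq:nullset} applied to $K_1$ there are exactly four such zeros, all simple. The determinant is then immediate from multiplicativity: $\det M_\lambda = (\det K_1)^{-1}\det C_\lambda\,\det K_0 = \det K_0/\det K_1$, since $\det C_\lambda \equiv 1$ on $\mathbb{C}^\ast$, again by Lemma~\ref{lem:Fswitch}\,(2).

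It remains to check that each of the four roots of $\det K_1$ really is a pole of $M_\lambda$, and for this I would substitute $C_\lambda = K_1\,\mathbf{F}_\lambda\,K_1^{-1}\,\mathbf{F}_{\lambda^{-1}}^{-1}$ from \eqref{eq:defC}, which collapses the factorisation to $M_\lambda = \mathbf{F}_\lambda\,K_1^{-1}\,\mathbf{F}_{\lambda^{-1}}^{-1}\,K_0$, with $\mathbf{F}$ evaluated at a fixed point of $\{y=y_1\}$, where it is pointwise invertible and holomorphic in $\lambda$ on $\mathbb{C}^\ast$. Hence at a root $\lambda_0$ of $\det K_1$,
\[
	\mathrm{res}[M_\lambda,\lambda_0] \;=\; \mathbf{F}_{\lambda_0}\;\mathrm{res}[K_1^{-1},\lambda_0]\;\mathbf{F}_{\lambda_0^{-1}}^{-1}\;K_0(\lambda_0)\,.
\]
Because $\det K_1$ vanishes to first order at $\lambda_0$, the residue $\mathrm{res}[K_1^{-1},\lambda_0] = \mathrm{adj}\,K_1(\lambda_0)/(\det K_1)'(\lambda_0)$ is a nonzero matrix of rank one — as exhibited by the explicit formulas for $\mathrm{res}[K^{-1},\lambda]$ obtained earlier — and conjugating it by the invertible matrices $\mathbf{F}_{\lambda_0}$ and $\mathbf{F}_{\lambda_0^{-1}}^{-1}$ keeps it nonzero of rank one. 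If $\lambda_0$ is not a root of $\det K_0$, then $K_0(\lambda_0)$ is invertible and the displayed residue is a nonzero rank-one matrix, so $\lambda_0$ is a genuine simple pole of $M_\lambda$. If $\lambda_0$ is a common root of $\det K_0$ and $\det K_1$, one checks separately that the line $\mathrm{im}\,K_0(\lambda_0)$ does not coincide with the one-dimensional kernel $\mathbf{F}_{\lambda_0^{-1}}\bigl(\ker\mathrm{res}[K_1^{-1},\lambda_0]\bigr)$, comparing the explicit kernels of $K_0$ and $K_1$ in \eqref{eq:kernels}. Either way $\mathrm{res}[M_\lambda,\lambda_0]\ne 0$, so $M_\lambda$ has precisely four simple poles, located at the four roots of $\det K_1$.

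The main obstacle is this last step — excluding an accidental cancellation of a pole, i.e. proving $\mathrm{res}[M_\lambda,\lambda_0]\neq 0$ at each of the four candidate points. Away from the common zeros of $\det K_0$ and $\det K_1$ this is a one-line rank count; at a (non-generic) common zero it reduces to the linear-algebra fact that $\mathrm{im}\,K_0(\lambda_0)$ and the transported kernel of $\mathrm{res}[K_1^{-1},\lambda_0]$ are distinct lines, which can be read off from \eqref{eq:kernels}. Everything else — meromorphy, the order bound on the poles, and the determinant identity — is routine manipulation of meromorphic matrix functions, using only $\det C_\lambda\equiv 1$ and the holomorphy of $C_\lambda$ on $\mathbb{C}^\ast$ from Lemma~\ref{lem:Fswitch}.
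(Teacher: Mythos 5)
Your treatment of meromorphy, the order bound, and the determinant identity coincides with the paper's own (two\hbox{-}line) proof: everything follows from $M_\lambda = K_1^{-1}C_\lambda K_0$, the holomorphy of $C_\lambda$ on $\mathbb{C}^\ast$ and $\det C_\lambda\equiv 1$ from Lemma~\ref{lem:Fswitch}\,(2), and the simplicity of the four roots of $\det K_1$ exhibited in \eqref{eq:roots}. Where you go beyond the paper is in trying to certify that none of the four candidate poles cancels, and that is exactly where there is a gap. In the case where $\lambda_0$ is a root of $\det K_1$ but not of $\det K_0$ your rank\hbox{-}one residue argument is fine (and can be shortened: $\det M_\lambda=\det K_0/\det K_1$ already has a simple pole at $\lambda_0$, so $M_\lambda$ cannot be regular there). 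But in the common\hbox{-}root case your proposed check --- that $\mathrm{im}\,K_0(\lambda_0)$ differs from $\mathbf{F}_{\lambda_0^{-1}}\bigl(\ker\mathrm{res}[K_1^{-1},\lambda_0]\bigr)$, ``read off from \eqref{eq:kernels}'' --- cannot be carried out as stated: one of the two lines is transported by the frame matrix $\mathbf{F}_{\lambda_0^{-1}}$ evaluated at a point of $\{y=y_1\}$, which depends on the solution $\omega$ and not only on the constants $A_i,\,B_i$, so nothing about it can be read off from \eqref{eq:kernels}.

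This is not a negligible corner case: since the null set \eqref{eq:nullset} is invariant under $(A,B)\mapsto(-A,-B)$, the complementary boundary conditions $A_1=-A_0$, $B_1=-B_0$ studied later in the paper make the root sets of $\det K_0$ and $\det K_1$ identical, so there all four candidate poles are common roots and your generic argument never applies. A workable substitute in that situation is to use the eigenvalues $f-g\,\nu=\mu^0_-/\mu^1_-$ and $f+g\,\nu=\mu^0_+/\mu^1_+$ computed in the paper: at each root of $\det K_1$ exactly one of $\mu^1_\pm$ vanishes simply, and the corresponding eigenvalue of $M_\lambda$ blows up provided that point is not also a root of the matching $\mu^0_\pm$, forcing a genuine pole of $M_\lambda$. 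To be fair, the paper's own proof simply asserts the existence of the four poles and does not address possible cancellation at all; you have identified the right issue but not closed it.
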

\begin{proof}
The four simple poles are the four simple roots of $\det K_1$. The last assertion follows from $\det C_\lambda = 1$ for all $\lambda \in \mathbb{C}^\ast$.
\end{proof}
Since $[M_\lambda,\,\xi_\lambda] = 0$, there exist $\lambda$-dependent functions $f,\,g$ such that 
\begin{equation}\label{eq:f-g}
	M_\lambda  = f\,\mathbbm{1} + g\, \xi_\lambda\,.
\end{equation}
Taking determinants gives
\begin{equation*}
	 \frac{\det K_0}{\det K_1} = f^2 + g^2 \det \xi_\lambda\,.
\end{equation*}
In terms of the eigenvalues, $\det K_0 = \mu^0_- \mu^0_+,\,\det K_1 =  \mu^1_- \mu^1_+$, where $\mu_\pm^0,\,\mu_\pm^1$ are as in \eqref{eq:mu_pm} with $A,\,B$ replaced by $A_0,\,B_0$ respectively $A_1,\,B_1$, and writing $\nu^2 = -\det \xi_\lambda$ we obtain
\begin{equation*}
	 f^2 - g^2 \nu^2 = (f-g\,\nu)(f + g\,\nu) = \mu^0_- \mu^0_+/(\mu^1_- \mu^1_+)
\end{equation*}
Setting	$f-g\,\nu =  \mu^0_-/\mu^1_-$ and $f +g\,\nu =  \mu^0_+/ \mu^1_+$ gives
\begin{equation*} %\label{eq:f-g}
	f = \frac{1}{2} \left(  \frac{\mu^0_-}{ \mu^1_-} +  \frac{\mu^0_+}{ \mu^1_+} \right) \qquad \mbox{ and } \qquad
	g = \frac{1}{2\,\nu} \left(    \frac{\mu^0_+}{ \mu^1_+} - \frac{\mu^0_-}{ \mu^1_-} \right) \,.
\end{equation*}
By \eqref{eq:mu-symmetry}, and since $\nu(\lambda^{-1}) = \lambda^{1-d} \nu ( \lambda) $ we have
\begin{equation*}
	f(\lambda^{-1}) = f(\lambda) \quad \mbox{ and } \quad g(\lambda^{-1}) = -  \lambda^{d-1} g(\lambda) \,.
\end{equation*}
%
%%%%%%%%%%%%%%%%%%%%
%
\subsection{Kernels.} 
We briefly verify that $f\mathbbm{1} + g\,\xi_\lambda$ has the given kernels at the four roots of $\det K_0$. Suppose the four roots of $\det K_0$ are $\{\varrho_0,\,r_0,\,\varrho_0^{-1},\,r_0^{-1} \}$ as in \eqref{eq:kernels} with $A,\,B$ replaced by $A_0,\,B_0$. Suppose further that $\mu^0_- (\varrho_0) = \mu^0_- (r_0) = 0$, and that $\langle \mathbf{v}_0 \rangle = \ker K_0(\varrho_0) = \ker K_0 (r_0)$. Then $M_{\varrho_0} \mathbf{v}_0 = M_{r_0} \mathbf{v}_0 = 0$ and as we may assume that $\xi_{\varrho_0} \mathbf{v}_0 = \xi_{r_0} \mathbf{v}_0 =-\nu\,\mathbf{v}_0$ we obtain
\begin{equation*} \begin{split}
	\left( f(\varrho_0)\,\mathbbm{1} + g(\varrho_0) \,\xi_{\varrho_0} \right) \mathbf{v}_0 &= \frac{1}{2} \frac{\mu^0_+(\varrho_0)}{ \mu^1_+(\varrho_0)}  \left( \mathbbm{1} + \frac{1}{\nu}\xi_{\varrho_0} \right)  \mathbf{v}_0  = 0\,, \\
\left( f(r_0)\,\mathbbm{1} + g(r_0) \,\xi_{r_0} \right) \mathbf{v}_0 &= \frac{1}{2} \frac{\mu^0_+(r_0)}{ \mu^1_+(r_0)} \left( \mathbbm{1} + \frac{1}{\nu}\xi_{r_0} \right)  \mathbf{v}_0 = 0\,.
\end{split}
\end{equation*} 
Similarly, if $\mu^0_+ (\varrho_0^{-1}) = \mu^0_+ (r_0^{-1}) = 0$, and that $\langle \mathbf{v}_0^\perp  \rangle = \ker K_0(\varrho_0^{-1}) = \ker K_0 (r_0^{-1})$, then $M_{\varrho_0^{-1}} \mathbf{v}_0^\perp = M_{r_0^{-1}} \mathbf{v}_0^\perp = 0$ and as we may assume that $\xi_{\varrho_0^{-1}} \mathbf{v}_0^\perp = \xi_{r_0^{-1}} \mathbf{v}_0^\perp =\nu\,\mathbf{v}_0^\perp$, we obtain
\begin{equation*} \begin{split}
	\left( f(\varrho_0^{-1})\,\mathbbm{1} + g(\varrho_0^{-1}) \,\xi_{\varrho_0^{-1}} \right) \mathbf{v}_0^\perp &= \frac{1}{2}  \frac{\mu^0_-(\varrho_0^{-1})}{ \mu^1_-(\varrho_0^{-1})}  \left( \mathbbm{1} - \frac{1}{\nu}\xi_{\varrho_0^{-1}} \right)  \mathbf{v}_0^\perp =  0\,, \\
\left( f(r_0^{-1})\,\mathbbm{1} + g(r_0^{-1}) \,\xi_{r_0^{-1}} \right) \mathbf{v}_0^\perp &= \frac{1}{2} \frac{\mu^0_-(r_0^{-1})}{ \mu^1_-(r_0^{-1})} \left( \mathbbm{1} - \frac{1}{\nu}\xi_{r_0^{-1}} \right)  \mathbf{v}_0^\perp = 0\,.
\end{split}
\end{equation*} 
%
%%%%%%%%%%%%%%%%%%%%%%%%%%%%%%%
%
\subsection{Spectrum.} 
The eigenvalues of $f\mathbbm{1} + g\,\xi_\lambda$ are $f \pm g\nu$. From above we have $f-g\,\nu =  \mu^0_-/ \mu^1_-$ and $f+ g\,\nu =  \mu^0_+/\mu^1_+$ so both eigenvalues of $M_\lambda$  are rational with two simple roots, and two simple poles, and principal divisors
\begin{equation*}
	(f - g\,\nu ) = \varrho_0 + r_0 - \varrho_1 - r_1 \quad \mbox { and } \quad (f + g\,\nu) = \varrho_0^{-1} + r_0^{-1} -   	\varrho_1^{-1} - r_1^{-1}\,.
\end{equation*}
Since $[ M_\lambda,\,\xi_\lambda ] =0$, these two matrices have the same eigenvectors, and thus
\begin{equation*}
	\xi_\lambda \mathbf{v} = \pm \nu \,\mathbf{v} \Longleftrightarrow  M_\lambda \mathbf{v} = (f  \pm g\,\nu)\, \mathbf{v} \,.
\end{equation*}
%
%%%%%%%%%%%%%%%%%%%%%%%%%%%%%%
%

%
%%%%%%%%%%%%%%%%%%%%
%
\subsection{Complementary boundary conditions} 

Of particular interest is the case when the two integrable boundary conditions are \emph{complimentary}, that is when the constants in the two K-matrices $K_0,\,K_1$ have opposite signs so that 
\begin{equation*} 
	A_1=-A_0 \quad \mbox{ and } B_1 = -B_0.
\end{equation*}
Omitting subscripts, and reverting back to constants $A,\,B$ we thus consider 
\begin{equation*} 
	K_0  = \begin{pmatrix}
 4 A  -4 B \lambda  & \lambda - \lambda^{-1} \\
 \lambda -  \lambda^{-1} & 4 A  - 4 B \lambda^{-1}
\end{pmatrix} \, \mbox{ and } \, 
	K_1  = \begin{pmatrix}
 -4 A  +4 B \lambda  & \lambda - \lambda^{-1} \\
 \lambda -  \lambda^{-1} & -4 A  + 4 B \lambda^{-1}
\end{pmatrix}\,.
\end{equation*}
From \eqref{eq:mu_pm} we know that $K_0$ has eigenvalues
\begin{equation*}  \begin{split}
	\mu_- ^0(\lambda) &= 4 A  - 2 B \left(\lambda + \lambda^{-1} \right)  - \sqrt{4 B^2+1} \left(\lambda - \lambda^{-1} \right) \\
\mu_+^0  (\lambda) &= 4 A  - 2 B \left(\lambda + \lambda^{-1} \right)  + \sqrt{4 B^2+1} \left(\lambda - \lambda^{-1} \right)
\end{split}
\end{equation*}
so under $(A,\,B) \mapsto (-A,\,-B)$ we see that $K_1$ has eigenvalues
\begin{equation} \label{eq:K1EV}  \begin{split}
	\mu_- ^1 &=  -\mu_+^0\\
\mu_+^1  &= -\mu_-^0
\end{split}
\end{equation}
Thus $\det K_0 = \det K_1$, and hence $\det M_\lambda  = f^2 - g^2\nu^2 = (f-g\,\nu)(f+g\,\nu) = 1$.  As in the previous section, and using \eqref{eq:K1EV} we have
\begin{equation*} 
	f-g\,\nu =  \frac{\mu^0_-}{ \mu^1_-} = - \frac{\mu^0_-}{ \mu^0_+}  \quad \mbox{ and } \quad f+g\,\nu =  \frac{\mu^0_+}{ \mu^1_+} = - \frac{\mu^0_+}{ \mu^0_-} 
\end{equation*}
and so indeed $(f-g\,\nu)(f+g\,\nu) = 1$. Further, solving for $f,\,g$ gives
\begin{equation*}
	f = -\frac{1}{2} \left(  \frac{\mu^0_-}{ \mu^0_+} +  \frac{\mu^0_+}{ \mu^0_-} \right) \quad \mbox{ and } \quad
	g = \frac{1}{2\,\nu} \left(    \frac{\mu^0_-}{ \mu^0_+} - \frac{\mu^0_+}{ \mu^0_-} \right) \,.
\end{equation*}
%
%%%%%%%%%%%%%%%%%%%%%%
%
\subsection{The dressing matrix}
From \eqref{eq:M} and \eqref{eq:f-g} we have $M_\lambda =  K_1^{-1} C_\lambda K_0 = f\,\mathbbm{1} + g \,\xi_\lambda$ and  using the $K_0$-symmetry of $\xi_\lambda$ proves the next 
\begin{proposition} The dressing matrix $C_\lambda$ is given by 
\begin{equation}
	C_\lambda = K_1  K_0^{-1} \bigl( f\,\mathbbm{1} - g\,\, \overline{\xi_{\bar\lambda}}^t \bigr) \,.
\end{equation}
\end{proposition}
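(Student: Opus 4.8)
The statement is essentially an unwinding of the two identities that precede it, so the approach is a direct algebraic manipulation rather than anything delicate. First I would start from \eqref{eq:M}, namely $M_\lambda = K_1^{-1} C_\lambda K_0$, and solve for the dressing matrix: $C_\lambda = K_1 M_\lambda K_0^{-1}$. Then I would substitute the commuting decomposition \eqref{eq:f-g}, $M_\lambda = f\,\mathbbm{1} + g\,\xi_\lambda$, to get
\[
	C_\lambda = K_1 \bigl( f\,\mathbbm{1} + g\,\xi_\lambda \bigr) K_0^{-1} = f\,K_1 K_0^{-1} + g\,K_1\,\xi_\lambda\,K_0^{-1}\,.
\]

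Next I would use that $\xi_\lambda$ is $K_0$-symmetric, i.e.\ \eqref{eq:Sklyanin-potentials} with $K = K_0$, which reads $K_0\,\xi_\lambda = -\,\overline{\xi_{\bar\lambda}}^t K_0$. Multiplying on the right by $K_0^{-1}$ gives $\xi_\lambda K_0^{-1} = -\,K_0^{-1}\,\overline{\xi_{\bar\lambda}}^t$, and hence
\[
	K_1\,\xi_\lambda\,K_0^{-1} = -\,K_1 K_0^{-1}\,\overline{\xi_{\bar\lambda}}^t\,.
\]
Plugging this back in and factoring out $K_1 K_0^{-1}$ on the left yields $C_\lambda = K_1 K_0^{-1}\bigl( f\,\mathbbm{1} - g\,\overline{\xi_{\bar\lambda}}^t \bigr)$, which is the claim.

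There is no genuine obstacle here; the only point requiring a little care is the bookkeeping of the $K_0$-symmetry relation — one must keep track of which of $\xi_\lambda$ and $\overline{\xi_{\bar\lambda}}^t$ appears on which side of $K_0$, and whether it is conjugation by $K_0$ or by $K_0^{-1}$ that is being used. Since $M_\lambda$ commutes with $\xi_\lambda$ by construction, the decomposition \eqref{eq:f-g} is legitimate wherever $\xi_\lambda$ is not a scalar multiple of the identity (the generic case treated in this section), and the identity then holds as an identity of meromorphic matrix-valued functions on $\C^\ast$, extended by continuity to the remaining points.
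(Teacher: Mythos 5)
Your proof is correct and is exactly the argument the paper intends: solve $M_\lambda = K_1^{-1}C_\lambda K_0 = f\,\mathbbm{1}+g\,\xi_\lambda$ for $C_\lambda$ and replace $\xi_\lambda K_0^{-1}$ by $-K_0^{-1}\overline{\xi_{\bar\lambda}}^t$ via the $K_0$-symmetry \eqref{eq:Sklyanin-potentials}. The only nit is the phrase ``multiplying on the right by $K_0^{-1}$'' --- obtaining $\xi_\lambda K_0^{-1}=-K_0^{-1}\overline{\xi_{\bar\lambda}}^t$ from $K_0\xi_\lambda=-\overline{\xi_{\bar\lambda}}^tK_0$ requires multiplying by $K_0^{-1}$ on both sides --- but the identity you use is correct and the rest goes through.
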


\bibliographystyle{amsplain}

\bibliography{ref}
\end{document}